\def \Z{\mathbb{Z}}
\def \R{\mathbb{R}}
\def \Q{\mathbb{Q}}
\def \A{\mathcal{A}}
\def \E{\mathcal{E}}
\def \v{{\bf v}}
\def \fv{\mathfrak{v}}
\def \k{{\bf k}}
\def \w{{\bf w}}
\def \B{\mathfrak{B}}
\def \L{\mathcal{L}}
\def \P{\mathcal{P}}
\def \h{\fv}
\def \O{\mathcal{O}}
\def \trop{\operatorname{trop}}
\def \Trop{\operatorname{Trop}}
\def \Spec{\operatorname{Spec}}
\def \SL{\operatorname{SL}}
\def \GL{\operatorname{GL}}
\def \PGL{\operatorname{PGL}}
\def \Sym{\operatorname{Sym}}
\def \PL{\operatorname{PL}}
\def \CPL{\operatorname{CPL}}
\def \rank{\operatorname{rank}}
\def \val{\operatorname{val}}
\def \Span{\operatorname{span}}
\theoremstyle{plain}
\newtheorem{theorem}{Theorem}[section]
\newtheorem{lemma}[theorem]{Lemma}
\newtheorem{proposition}[theorem]{Proposition}
\newtheorem{corollary}[theorem]{Corollary}
\newtheorem{THM}{Theorem}
\newtheorem{PROP}[THM]{Propposition}
\theoremstyle{definition}
\newtheorem{example}[theorem]{Example}
\newtheorem{definition}[theorem]{Definition}
\newtheorem{remark}[theorem]{Remark}
\newtheorem*{DEFIN}{Definition}
\newtheorem*{REM}{Remark}
\newtheorem*{QUEST}{Question}
\begin{document}
\title{Toric vector bundles, valuations and tropical geometry}

\author{Kiumars Kaveh}
\address{Department of Mathematics, University of Pittsburgh,
Pittsburgh, PA, USA.}
\email{kaveh@pitt.edu}

\author{Christopher Manon}
\address{Department of Mathematics, University of Kentucky, Lexington, KY, USA}
\email{Christopher.Manon@uky.edu}

\date{\today}
\thanks{The first author is partially supported by a National Science Foundation Grant (Grant ID: DMS-1601303) and a Simons Collaboration Grant for Mathematicians.}
\thanks{The second author is partially supported by National Science Foundation Grant DMS-2101911 and a Simons Collaboration Grant (award number 587209).}
\subjclass[2010]{}
\keywords{}
\begin{abstract}
A toric vector bundle $\E$ is a torus equivariant vector bundle on a toric variety. 
We give a valuation theoretic and tropical point of view on toric vector bundles. We present three (equivalent) classifications of toric vector bundles, which should be regarded as repackagings of the Klyachko data of compatible $\Z$-filtrations of a toric vector bundle: (1) as piecewise linear maps to space of $\Z$-valued valuations, (2) as valuations with values in the semifield of piecewise linear functions, and (3) as points in tropical linear ideals over the semifield of piecewise linear functions. 
Moreover, we interpret the known criteria for ampleness and global generation of $\E$ as convexity conditions on its piecewise linear map in (1). 
Finally, using (2) we associate to $\E$ a collection of polytopes indexed by elements of a certain (representable) matroid encoding the dimensions of weight spaces of global sections of $\E$. This recovers and extends the Di Rocco-Jabbusch-Smith matriod and parliament of polytopes of $\E$.  
This is a follow up paper to \cite{KM-tpb}.
\end{abstract}

\maketitle

\tableofcontents

\section*{Introduction}
The purpose of this paper is twofold: (1) cast some known results in the theory of toric vector bundles in the language of piecewise linear maps and buildings, and (2) make a connection between the theory of toric vector bundles and valuation theory and tropical geometry over the piecewise linear semifield.  

Throughout $\k$ denotes the base field which we take to be of characteristic $0$. We let $T \cong (\k^*)^n$ be an $n$-dimensional (split) algebraic torus over $\k$. We let $M$ and $N$ denote the lattices of characters and cocharacters of $T$ respectively, and we put $N_\R = N \otimes_\Z \R$ and $M_\R = M \otimes_\Z \R$. 
Let $\Sigma$ be a fan in $N_\R \cong \R^n$ with $X_\Sigma$ its associated toric variety. Recall that a toric variety is a normal variety equipped with an action of algebraic torus $T$ such that $T$ has an open orbit isomorphic to $T$ itself. For the rest of the paper, we fix a point $x_0$ in the open torus orbit in $X_\Sigma$. 
A toric vector bundle $\E$ on $X_\Sigma$ is a vector bundle with a $T$-linearization, namely a linear action of $T$ on $\E$ that lifts the action of $T$ on $X_\Sigma$.

It is well-known that toric line bundles on $X_\Sigma$ are in one-to-one correspondence with functions $\phi: |\Sigma| \to \R$ that are piecewise linear with respect to $\Sigma$ and are integral, i.e. map $|\Sigma| \cap \Z^n$ to $\Z$ (here $|\Sigma|$ denotes the support of $\Sigma$, the union of all cones in $\Sigma$). The first classification of toric vector bundles goes back to \cite{Kaneyama} and is in terms of certain cocycles. Alternatively, Klyachko gave a classification in terms of certain compatible filtrations on a finite dimensional vector space \cite{Klyachko}. The Klyachko classification has been used in much of the work in the literature, for example \cite{Payne-moduli}, \cite{HMP}, \cite{DJS} and \cite{KM-PL}. Below is an overview of the content of the paper.


\smallskip
\noindent \textbf{Toric vector bundles as piecewise linear maps to (cones over) Tits buildings.}
Let $\E$ be a toric vector bundle over $X_\Sigma$ with $E = \E_{x_0}$ the fiber over $x_0$.
The Klyachko classification is in terms of data of compatible (decreasing) $\Z$-filtrations on the $\k$-vector space $E$ (see Section \ref{sec-prelim-toric-vb}). In \cite{KM-tpb}, the authors interpret the Klyachko data of filtrations as a \emph{piecewise linear map} $\Phi$ from the fan $\Sigma$ to $\tilde{\B}(\GL(E))$, the \emph{(cone over the) Tits building of the general linear group $\GL(E)$}. This point of view is then used to give a Klyachko type classification of torus equivariant principal $G$-bundles on toric varieties, for any reductive algebraic group $G$, in terms of piecewise linear maps to (the cone over) the Tits building of $G$ (see \cite[Theorem 1]{KM-tpb}).

The (cone over the) Tits building of $\GL(E)$ can be realized as the space of $\R$-valued valuations on the vector space $E$. We simply denote this space by $\tilde{\B}(E)$. In Section \ref{sec-prelim} we review some background material about valuations and buildings. In Section \ref{subsec-PL-maps}, as a special case of the notion of piecewise linear map in \cite{KM-tpb}, we reformulate the Klyachko data of a toric vector bundle as a piecewise linear map $\Phi: |\Sigma| \to \tilde{\B}(E)$. 
We point out that the idea of regarding the Klyachko data of compatible filtrations as a piecewise linear map is not quite new and goes back to \cite{Payne-cover}. Nevertheless, we find this packaging of Klyachko data, coupled with insights from the theory of buildings, quite helpful. For example, beside the classification of toric principal bundles in \cite{KM-tpb}, it provides the right gadget to classify toric vector bundles over toric schemes over a discrete valuation ring (see \cite{KMT}).


\smallskip
\noindent \textbf{Positivity of toric vector bundles in terms of piecewise linear maps.}
In \cite{HMP} and \cite{DJS}, criteria are given for nefness/ampleness and global generation of toric vector bundles in terms of their Klyachko data. In Section \ref{subsec-tvb-positivity}, we interpret these criteria in terms of two convexity notions for piecewise linear maps to (cones over) the Tits buildings which we call \emph{buildingwise convexity} and \emph{fanwise convexity}. 
\begin{THM}
Let $\E$ be a toric vector bundle on a complete toric variety $X_\Sigma$ with corresponding piecewise linear map $\Phi: |\Sigma| \to \tilde{\B}(E)$. 
\begin{itemize}
\item[(1)] $\E$ is nef (respectively ample) if and only if $\Phi$ is buildingwise convex (respectively strictly buildingwise convex).
\item[(2)] $\E$ is globally generated if and only if $\Phi$ is fanwise convex.
\end{itemize}
\end{THM}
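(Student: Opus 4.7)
The plan is to translate the criteria from \cite{HMP} for nefness/ampleness and from \cite{DJS} for global generation, originally phrased in terms of compatible Klyachko filtrations, into statements about the piecewise linear map $\Phi: |\Sigma| \to \tilde{\B}(E)$ of Section \ref{subsec-PL-maps}. The key observation is that for any fixed $f \in E$, the real-valued function $\phi_f: |\Sigma| \to \R$ given by $\phi_f(v) = \Phi(v)(f)$ is itself piecewise linear on $\Sigma$, and its values along rays encode exactly the Klyachko filtration jumps that appear in the numerical criteria of \cite{HMP} and \cite{DJS}.

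I would first unpack the two convexity notions concretely. Both amount to concavity conditions on the functions $\phi_f$ for $f \in E$, tested across cones of $\Sigma$; inside any single maximal cone the map $\Phi$ is linear, so both conditions are automatic there. The difference lies in how the concavity inequality is allowed to be witnessed: buildingwise convexity tests the inequality using the full apartment structure of $\tilde{\B}(E)$, while fanwise convexity imposes the sharper requirement that the witnessing apartment (equivalently, the witnessing Klyachko splitting) be compatible with the combinatorics of $\Sigma$.

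For (1), I would unwind the HMP criterion: $\E$ is nef iff for every $u \in M$ that appears as the weight of a global section $s$ of $\E$ and every component $f$ of $s$ in a Klyachko basis compatible with a maximal cone $\sigma$, one has $\phi_f(v) + \langle u, v \rangle \geq 0$ for all $v \in |\Sigma|$. Ranging over all such valid pairs $(u,f)$ and dualizing via the building structure yields the concavity of each $\phi_f$, i.e.\ buildingwise convexity of $\Phi$. Strict buildingwise convexity corresponds to strict concavity across walls between maximal cones, matching the classical criterion for ampleness. For (2), the DJS parliament-of-polytopes criterion states that $\E$ is globally generated iff for each torus fixed point $x_\sigma$ the fiber $\E_{x_\sigma}$ is generated by global sections whose weight characters pair correctly with all rays of $\sigma$; this translates directly into the fanwise realizability condition, namely the concavity of each $\phi_f$ witnessed by common Klyachko bases along cones of $\Sigma$.

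The main obstacle will be setting up the dictionary between building-theoretic convexity and the classical Klyachko-based criteria carefully. One must use the fact that any two points of $\tilde{\B}(E)$ share a common apartment to show that buildingwise convexity can be tested one apartment at a time, and then distinguish this from the sharper fanwise condition, which requires compatibility with the fan and hence with specific Klyachko splittings. Treating the strict inequality for ampleness across walls of $\Sigma$ requires additional bookkeeping. Once the dictionary is in place, both equivalences reduce essentially to reformulations of the results of \cite{HMP} and \cite{DJS}.
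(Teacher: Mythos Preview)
Your overall strategy---reduce to the criteria in \cite{HMP} and \cite{DJS} and translate them into statements about $\Phi$---is the paper's strategy too. But there is a genuine gap in your treatment of part~(1).

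The criterion you attribute to \cite{HMP} is not what they prove. You write that $\E$ is nef iff a certain inequality $\phi_f(v) + \langle u, v\rangle \geq 0$ holds for weights $u$ of global sections. But \cite[Theorem 2.1]{HMP} says something different: $\E$ is nef (resp.\ ample) iff its restriction to every $T$-invariant curve $C \subset X_\Sigma$ is nef (resp.\ ample). Global sections play no role. Since nef toric vector bundles need not be globally generated---this is precisely the point of distinguishing the two parts of the theorem---a criterion phrased in terms of global sections cannot characterize nefness.

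The paper's argument for (1) runs through this curve criterion. Each invariant curve $C$ corresponds to a wall $\tau = \sigma \cap \sigma'$ between adjacent maximal cones. By Corollary~\ref{cor-equiv-split-curve}, $\E_{|C}$ splits equivariantly as $\bigoplus_i \L_{u_i, u'_{\pi(i)}}$, where $\pi$ is the permutation matching the frames $L_\sigma$ and $L_{\sigma'}$ (ultimately supplied by axiom (B3) of the building, Lemma~\ref{lem-B3-val}). Nefness of $\E_{|C}$ becomes the numerical condition $\langle u_i - u'_{\pi(i)}, \v_\tau \rangle \geq 0$ for all $i$, and one checks that this is exactly the inequality $T'(x) \leq \Phi(x)$ in Definition~\ref{def-buildingwise-convex}. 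You make no mention of invariant curves, of the splitting over $\mathbb{P}^1$, or of the permutation $\pi$; yet these are the essential ingredients---indeed, the very definition of buildingwise convexity is built from $\pi$ via the maps $T, T'$ in \eqref{equ-T-T'}. Your informal description of buildingwise convexity as ``testing using the full apartment structure'' and of fanwise convexity as a ``sharper'' compatibility condition is also off: buildingwise convexity is a \emph{local} condition across each wall, fanwise convexity is a \emph{global} comparison of each linear piece $S_\sigma$ with all of $\Phi$, and neither implies the other (cf.\ \cite[Example 5.3]{DJS}).

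For part~(2) your sketch is closer to the mark and essentially matches the paper: one invokes \cite[Theorem 1.2]{DJS}, which in the present language says that for each $\sigma \in \Sigma(n)$ there is a compatible frame $L_\sigma$ with $u_{\sigma,i} \in P_{e_i}$ for all $i$, and then checks this is equivalent to $S_\sigma(x) \leq \Phi(x)$ for all $x \in N_\R$, i.e.\ to fanwise convexity.
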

While a toric line bundle is nef if and only if it is globally generated, the notions of nef and globally generated are different for toric vector bundles (see \cite[Example 5.3]{DJS}). The above theorem shows that this is reflected in the fact that the notions of buildingwise convex and fanwise convex are different.

We hope that the convexity conditions on $\Phi_\E$ for ampleness and global generation of $\E$ will be useful in attacking the Fujita conjecture for projectivized toric vector bundles. 

\begin{QUEST}
Can we formulate the fanwise and buildingwise convexity of $\Phi: |\Sigma| \to \tilde{\B}(E)$ in terms of convexity notions in the building?
\end{QUEST}

\smallskip 
\noindent \textbf{Toric vector bundles as piecewise linear valuations.}
In commutative algebra, one usually considers valuations with values in an ordered abelian group. This definition, word by word, extends to a valuation with values in an \emph{idempotent semifield} (see Definitions \ref{def-idempotent-semifield},  \ref{def-valuationO} as well as \cite{Giansiracusa}). 

In our case, we are interested in the semifield of (integral) piecewise linear functions. 
Let $\PL(N, \Z)$ denote the set of all piecewise linear functions, with respect to some complete fan, on the vector space $N_\R$ which have integer values on the lattice $N$. Moreover, we add a unique ``infinity element'' $\infty$ to $\PL(N, \Z)$ which is greater than any other element. We regard it as the function which assigns value infinity to all points in $N_\R$. The set $\PL(N, \Z)$ is an idempotent semifield with operations of taking minimum and addition of functions. We call a valuation on a vector space and with values in $\PL(N, \Z)$ a \emph{piecewise linear valuation}.
More precisely we have the following definition:
\begin{DEFIN}[Piecewise linear valuation]
Let $E$ be a finite dimensional $\k$-vector space. A map $\fv: E \to\PL(N, \Z)$ is a \emph{piecewise linear valuation} if:
\begin{itemize}
\item[(a)] For any $e \in E$ and any $0 \neq c \in \k$ we have $\fv(c e) = \fv(e)$.
\item[(b)] For any $e_1, e_2 \in E$ we have
$\fv(e_1+e_2) \geq \min(\fv(e_1), \fv(e_2))$. 
\item[(c)] $\fv(e) = \infty$ if and only if $e = 0$.
\end{itemize}
If the image of $\fv$ is finite we call $\fv$ a \emph{finite piecewise linear valuation}.
\end{DEFIN}

\begin{REM}[Piecewise linear valuations on algebras]
Let $A$ be a $\k$-algebra and domain. A map $\fv: A \to \PL(N, \Z)$ is called a \emph{piecewise linear valuation} on $A$ if it satisfies (a)-(c) and moreover $\fv(f_1f_2) = \fv(f_1)+\fv(f_2)$, for all $f_1, f_2 \in A$. If $A$ is graded, we say that $\fv$ is \emph{homogeneous} if $\fv(f)$ is equal to the minimum of values of $\fv$ on the homogeneous components of $f$. It is easy to show that a piecewise linear valuation on $E$ extends uniquely to a homogeneous piecewise linear valuation on the symmetric algebra $\Sym(E)$. In fact, valuations on $E$ are in one-to-one correspondence with homogeneous valuations on $\Sym(E)$.
\end{REM}

Let $\Sigma_1$, $\Sigma_2$ be fans with the same support. We say that toric vector bundles $\E_i$ over  $X_{\Sigma_i}$, $i=1, 2$, are \emph{equivalent} if there is a common refinement $\Sigma$ of the $\Sigma_i$ such that $\pi_1^*(\E_1) \cong \pi_2^*(\E_2)$, where $\pi_i: X_\Sigma \to X_{\Sigma_i}$ is the toric blow-up corresponding to refinement of $\Sigma_i$ to $\Sigma$. 
\begin{THM}[Toric vector bundles as piecewise linear valuations]   \label{th-intro-tvb-preval}
The equivalence classes of toric vector bundles $\E$ on complete $T$-toric varieties $X_\Sigma$ , with $E$ as the fiber over the distinguished point $x_0$, with respect to the above equivalence (pull back by toric blowups), are in one-to-one correspondence with finite piecewise linear valuations $\fv: E \to \PL(N, \Z)$.  
\end{THM}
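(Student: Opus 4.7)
The plan is to go through the intermediate classification from \cite{KM-tpb}, which encodes a toric vector bundle $\E$ on $X_\Sigma$ as a piecewise linear map $\Phi_\E: |\Sigma| \to \tilde{\B}(E)$, and to establish a bijection between such maps (modulo fan refinement) and finite piecewise linear valuations via the rule
\[
\fv(e)(v) := \Phi(v)(e).
\]
Starting from $\E$ with its associated $\Phi_\E$, I would define $\fv_\E$ by this formula and verify the valuation axioms \emph{pointwise} in $v$: for each fixed $v$, $\Phi_\E(v)$ is an $\R$-valued valuation on $E$, so properties (a)--(c) and integrality at lattice points transfer. Finiteness of the image of $\fv_\E$ follows from the factorization of $\Phi_\E|_\sigma$ through an apartment: on each cone $\sigma$ there is a basis $B_\sigma = (e_1^\sigma, \ldots, e_r^\sigma)$ of $E$ and linear functionals $\alpha_i^\sigma$ on $\sigma$ with $\Phi_\E(v)(\sum c_i e_i^\sigma) = \min_{c_i \neq 0} \alpha_i^\sigma(v)$, so $\fv_\E(e)|_\sigma$ depends only on the support of $e$ in $B_\sigma$ and takes only finitely many values. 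Refinement invariance is immediate because $\Phi_\E$ is defined as a function on $N_\R$, which does not change under pullback along a toric blowup.

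For the reverse direction, given $\fv$ with image $\{\phi_1, \ldots, \phi_m\} \subset \PL(N,\Z)$, I would choose a complete fan $\Sigma$ that refines a common fan of linearity for the $\phi_j$ and is fine enough that the pointwise order of the $\phi_j(v)$ is constant on each cone. Setting $\Phi(v)(e) := \fv(e)(v)$ gives, pointwise, an honest $\R$-valued valuation on $E$: axioms (a) and (c) transfer directly, and because $\R$ is totally ordered, the ultrametric inequality (b) automatically sharpens to equality whenever $\Phi(v)(e_1) \neq \Phi(v)(e_2)$. Once $\Phi: |\Sigma| \to \tilde{\B}(E)$ is shown to be piecewise linear in the sense of \cite{KM-tpb}, that paper produces a toric vector bundle $\E_\Sigma$ on $X_\Sigma$; two sufficiently fine choices of $\Sigma$ share a common refinement, so the equivalence class of $\E_\Sigma$ is canonical.

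The main obstacle is verifying the piecewise linearity of $\Phi$, which requires producing, for each cone $\sigma \in \Sigma$, a single basis of $E$ that simultaneously diagonalizes every $\Phi(v)$, $v \in \sigma$. The plan is to observe that the subspaces
\[
U_j^\sigma := \{e \in E : \fv(e)(w) \geq \phi_j(w) \text{ for all } w \in \sigma\}
\]
are linear (by axiom (b)) and, because the order of the $\phi_j$ is fixed on $\sigma$, form a decreasing flag there. For each $v \in \mathrm{relint}(\sigma)$, the filtration underlying $\Phi(v)$ is exactly this common flag, with jump values $\phi_j(v)$ varying linearly in $v$. Picking a basis $B_\sigma$ of $E$ adapted to the flag, each $\fv(e_i^\sigma)$ equals some $\phi_j$ and is therefore linear on $\sigma$, so the pointwise ultrametric equality yields $\Phi(v)(\sum c_i e_i^\sigma) = \min_{c_i \neq 0} \fv(e_i^\sigma)(v)$ for $v \in \sigma$, realizing $\Phi|_\sigma$ as a linear map into the apartment of $B_\sigma$. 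The two constructions are inverse by design, both encoding the same data $\fv(e)(v) = \Phi(v)(e)$.
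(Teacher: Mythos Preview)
Your proposal is correct and follows essentially the same route as the paper. The paper's argument (Theorem~\ref{th-semilattice-preval-toric-vb}, resting on Theorem~\ref{th-plm-vs-preval-plf}) also passes through the duality $\fv(e)(x)=\Phi(x)(e)$, verifies the valuation axioms pointwise, and in the reverse direction refines the fan until the finitely many values $\phi_j$ of $\fv$ are linear and totally ordered on each cone, then reads off a flag and an adapted basis exactly as you do; the only cosmetic difference is that the paper phrases the construction via a spanning set $\mathcal{B}\subset E$ whose $\fv$-image exhausts the image of $\fv$, whereas you work directly with the image $\{\phi_1,\ldots,\phi_m\}$.
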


A key ingredient in the proof of Theorem \ref{th-intro-tvb-preval} is Theorem \ref{th-plm-vs-preval-plf} which gives a correspondence between integral piecewise linear maps $\Phi$ to $\tilde{\B}(E)$ and piecewise linear valuations $\fv: E \to \PL(N, \Z)$. 

An interesting feature of realizing a toric vector bundle $\E$ as a piecewise linear valuation is that it allows one to immediately recover and extend the \emph{matroid} and \emph{parliament of polytopes} of $\E$, introduced in \cite{DJS}. The parliament of polytopes $P_\E$ is a finite collection of convex polytopes in $M_\R$ and labeled by elements of a (representable) matroid $\mathcal{M}$ in $E$. It plays the role of the moment polytope/Newton polytope of a $T$-linearized line bundle on a toric variety. In particular, the dimensions of weight spaces of global sections of $\E$ can be read off from its parliament (see \cite[Proposition 1.1]{DJS}). The matroid and parliament of polytopes of $\E$ are constructed from its \emph{Klyachko arrangement}. The Klyachko arrangement of a toric vector bundle is the subspace arrangement obtained by intersecting any number of subspaces appearing in its Klyachko filtrations.

Below we explain how the notion of parliament of polytopes is related to the piecewise valuation $\fv$ of a toric vector bundle. Let $\fv: E \to \PL(N, \Z)$ be a valuation. Let $S \subset \PL(N, \Z)$ be a join-subsemilattice, that is, $S$ is closed under taking maximum. To $S$ there corresponds a subspace arrangement 
$\mathcal{A}_{\fv, S}$ consisting of all subspaces $E_{\geq \phi} = \{ e \in E \mid \fv(e) \geq \phi \}$, for all $\phi \in S$. Since $\fv$ is closed under taking maximum, $\mathcal{A}_{\fv, S}$ is closed under intersection. 
We also recall that to every piecewise linear function $\phi: N_\R \to \R$ there corresponds a polytope (possibly empty): 
$$P_\phi = \{ y \in M_\R \mid \langle x, y \rangle \leq \phi(x),~\forall x \in N_\R \}.$$
We let $M(\fv, S) \subset E$ denote the matroid of the subspace arrangement $\mathcal{A}_{\fv, S}$ (see Theorem \ref{th-matroid-of-arrangement}). We let $$P(\fv, S) = \{P_{\fv(e)} \mid e \in M(\fv, S) \},$$ and call it the \emph{parliament of polytopes} of $(\fv, S)$.

Let $\E$ be a toric vector bundle with corresponding finite piecewise linear valuation $\fv: E \to \PL(N, \Z)$. The following shows that the dimensions of weight spaces of global sections of $\E$ can be recovered from the parliament $P(\fv, S)$ (see Theorem \ref{th-parliament-global-sec}). It generalizes \cite[Proposition 1.1]{DJS}:
\begin{PROP}
Let $\E$ be a toric vector bundle over a complete toric variety $X_\Sigma$ with corresponding piecewise linear valuation $\fv: E \to \PL(N, \Z)$. Let $S \subset \PL(N, \Z)$ be a subset that contains the character lattice $M$ and is closed under taking maximum. Then for any $u \in M$ we have:  
$$\dim(H^0(X_\Sigma, \E)_u) = \rank\{ e \in M(\fv, S) \mid u \in P_{\fv(e)}\}.$$
\end{PROP}
More generally, we can consider the larger semifield $\widehat{\PL}(N, \overline{\Z})$ consisting of functions $\phi: N \to \overline{\Z} = \Z \cup \{\infty\}$ which are homogeneous of degree $1$, that is, $\phi(c x) = \phi(x)$, for all $c \in \Z$. 
The Di Rocco-Jabusch-Smith parliament of polytopes of $\E$ can be recovered from the above construction for a certain subset $S_\Sigma \subset \widehat{\PL}(N, \overline{\Z})$ (see Proposition \ref{prop-arrangement-intersect-E-rho}).
It is well-known that the space of polytopes has a natural semiring structure given by the convex hull of union and the Minkowski sum. This semiring can be identified with the semiring of concave piecewise linear functions (see Section \ref{subsec-preval-plf}). In light of Theorem \ref{th-intro-tvb-preval} we ask the following question.
\begin{QUEST}
What toric vector bundles correspond to piecewise linear valuations on $E$ with values in the semialgebra of polytopes (equvialently, concave piecewise linear functions)? (See Section \ref{subsec-preval-plf}.)
\end{QUEST}


\begin{REM}
Theorem \ref{th-intro-tvb-preval} is the opening act for the companion paper \cite{KM-PL} where the idea of classification of toric vector bundles by piecewise linear valuations is far extended to \emph{toric flat families}. One of the main results states that torus equivariant flat families $\pi: \mathcal{X} \to X_\Sigma$ with generic fiber $Y=\Spec(A)$ are classified by \emph{piecewise linear valuations} on $A$ (\cite[Theorem 1.2]{KM-PL}). This piecewise linear valuation perspective is then used to obtain results on finite generation of Cox rings of projectivized toric vector bundles (\cite[Section 6]{KM-PL}). This point of view also opens doors to the study of tropical geometry over the semifield of piecewise linear functions.    
\end{REM}

\smallskip
\noindent \textbf{Toric vector bundles as tropical points.}
Given an ideal $I$ in a polynomial ring $\k[x_1, \ldots, x_s]$ and an idempotent semifield $(\mathcal{O}, \oplus, \otimes)$, one defines the tropical variety $\Trop_\mathcal{O}(I) \subset \mathcal{O}^s$ (see Section \ref{sec-tvbs-trop-points}). When $(\mathcal{O}, \oplus, \otimes) = (\overline{\R}, \min, +)$ is the tropical semifield, the tropical variety is denoted by $\Trop(I)$. It is the support of a fan in $\R^s$ and contains important information about the variety defined by $I$. Its study is the subject of tropical geometry.  

\begin{QUEST}
What geometric information are encoded in tropical varieties over the piecewise linear semifield $\PL(N, \Z)$? 
\end{QUEST}
In Section \ref{sec-tvbs-trop-points}, we take a step towards answering this question. We see that points in the tropical varieties of linear ideals over $\PL(N, \Z)$ are in correspondence with toric vector bundles! More precisely, we have the following (see Theorem \ref{th-val-vs-trop-pt} and Proposition \ref{prop-tropdata} for more details):
\begin{THM}[Toric vector bundles as tropical points]
Let $B=\{b_1, \ldots, b_s\}$ be a spanning set for the vector space $E$. Let $L \subset \k[x_1, \ldots, x_s]$ be the ideal generated by the linear relations among the $b_i$.
\begin{itemize}
\item[(i)] A tuple $(\phi_1, \ldots, \phi_s) \in (\PL(N, \Z))^s$ lies in $\Trop_{\PL(N, \Z)}(L)$, if and only if there is a (unique) valuation $\fv:E \to \PL(N, \Z)$ with $\fv(b_i) = \phi_i$, for all $i$.  
\item[(ii)] In light of Theorem \ref{th-intro-tvb-preval}, (i) implies that the points in $\Trop_{\PL(N, \Z)}(L)$ correspond to toric vector bundles (up to pull-back by toric blowups).
\end{itemize}    
\end{THM}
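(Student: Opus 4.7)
The plan is to reduce the theorem to two pieces of work: part (ii) is a formal consequence of part (i) combined with Theorem~\ref{th-intro-tvb-preval} (the correspondence between equivalence classes of toric vector bundles and finite piecewise linear valuations on $E$), so the real content is part (i). For that, I would handle the two implications separately, with the hard direction being the construction of the valuation from a tropical point.

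For the forward direction of (i), assume $\fv: E \to \PL(N, \Z)$ is a valuation with $\fv(b_i)=\phi_i$. The ideal $L$ is generated by linear forms $f=\sum_{i\in I} a_i x_i$ coming from linear dependencies $\sum_{i\in I} a_i b_i=0$ with $a_i\neq 0$ for $i\in I$. Tropicalizing over $\PL(N,\Z)$ sends $f$ to $\bigoplus_{i\in I}\phi_i=\min_{i\in I}\phi_i$, and lying in $\Trop_{\PL(N,\Z)}(L)$ means, by definition, that this minimum is attained at least twice in $\PL(N,\Z)$ for each generator $f$. If some $i_0\in I$ uniquely achieved the minimum on some open set in $N_\R$, then solving $b_{i_0}=-a_{i_0}^{-1}\sum_{i\neq i_0}a_i b_i$ and applying axioms (a) and (b) would force $\phi_{i_0}\geq \min_{i\neq i_0}\phi_i$ on that set, a contradiction. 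So the tropical condition follows.

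For the reverse direction, given $(\phi_1,\ldots,\phi_s)\in\Trop_{\PL(N,\Z)}(L)$, I would define
$$\fv(e)=\max\Bigl\{\min_{i\in\supp(a)}\phi_i \;\Big|\; a\in\k^s,\ \sum_i a_i b_i=e\Bigr\},$$
with the convention $\fv(0)=\infty$, where the max is taken in the lattice $\PL(N,\Z)\cup\{\infty\}$. The argument that this is well defined and takes values in $\PL(N,\Z)$ proceeds pointwise: for each $v\in N_\R$, evaluation yields a real tuple $(\phi_1(v),\ldots,\phi_s(v))$, and the tropical condition at each generator of $L$ is preserved under evaluation, so this tuple lies in the classical real tropical linear variety $\Trop_\R(L)$. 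By standard tropical linear algebra (the matroidal/Bergman fan description of tropicalizations of linear ideals), such a real tropical point corresponds to a unique real-valued valuation $\fv_v: E\to\overline{\R}$ with $\fv_v(b_i)=\phi_i(v)$, given by the max-min formula above, in which the max is actually attained over the finitely many circuits of the matroid of $B$.

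The main obstacle is then globalizing this pointwise construction: showing the function $v\mapsto \fv_v(e)$ lies in $\PL(N,\Z)$, and that $\fv$ so defined is a piecewise linear valuation. For this I would fix a common refinement $\Sigma$ of the fans on which each $\phi_i$ is linear and integral. On each cone $\sigma\in\Sigma$, the $\phi_i|_\sigma$ are linear with integer slopes, and the optimal representation $a$ of $e$ achieving the pointwise max-min can be chosen from a finite (matroidal) list independent of $v\in\sigma$, making $\fv(e)|_\sigma$ a max of mins of finitely many integral linear functions, hence piecewise linear and integral; after further refinement of $\Sigma$ it becomes linear on each cone, showing $\fv(e)\in\PL(N,\Z)$. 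Axioms (a) and (b) are then immediate from the max-min formula, axiom (c) follows because $\fv_v(e)<\infty$ whenever $e\neq 0$ (pointwise), and uniqueness is forced by the fact that a valuation is determined on any spanning set by axioms (a), (b). This produces the unique valuation $\fv$ of the theorem, completing (i); then (ii) is read off from Theorem~\ref{th-intro-tvb-preval}. The subtlety I anticipate is precisely the uniform choice of optimal representation across the cones of $\Sigma$, which I would handle by appealing to the piecewise linear map interpretation from Theorem~\ref{th-plm-vs-preval-plf} to organize the data as a piecewise linear map $|\Sigma|\to\tilde{\B}(E)$.
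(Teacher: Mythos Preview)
Your approach parallels the paper's: both reduce to pointwise evaluation (obtaining a map $N_\R \to \Trop(L) \subset \R^s$), then establish that the resulting map $N_\R \to \tilde{\B}(E)$ is piecewise linear with respect to a suitable fan, and invoke Theorem \ref{th-plm-vs-preval-plf}. The packaging differs. The paper uses the embedding $j: \Trop(L) \hookrightarrow \tilde{\B}(E)$ coming from Berkovich analytification, and produces the fan $\Sigma$ by refining the common domain of linearity of the $\phi_i$ by the pullback of the Gr\"obner fan of $L$; the key input is that maximal Gr\"obner cones $\tau$ of a linear ideal correspond to bases $B_\tau \subset \mathcal{B}$, and that on each such cone the valuation $j(a)$ is adapted to $B_\tau$. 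Your max--min formula computes the same valuation, and your ``finite matroidal list'' of optimal representations is precisely this collection of bases $B_\tau$. So the arguments agree in substance, though the paper's Gr\"obner-fan formulation is tidier and sidesteps having to argue directly that the max in your formula is attained.

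There is one genuine gap. Your uniqueness argument is incorrect: a valuation is \emph{not} determined by its values on an arbitrary spanning set via axioms (a) and (b). For instance, if $\mathcal{B}=\{b_1,b_2\}$ is a basis of $E=\k^2$ (so $L=0$ and every tuple is tropical) and $\phi_1=\phi_2=0$, then both the trivial valuation and the valuation with $\fv(b_1+b_2)=1$ restrict to $(\phi_1,\phi_2)$ on $\mathcal{B}$. The paper also asserts uniqueness without proof; a correct statement needs an extra hypothesis on $\mathcal{B}$ of the sort appearing in Proposition \ref{prop-tropdata}. A smaller point: in the forward direction you verify the tropical condition only on linear generators, but $\Trop_\O(L)$ is by definition an intersection over \emph{all} $f\in L$; the paper covers this via Proposition \ref{thm-payne}, which you should cite (your argument extends to arbitrary linear forms in $L$, but nonlinear elements need the general statement).
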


More systematic study of geometric data encoded by tropical points over the piecewise linear semifield $\PL(N, \Z)$ has been initiated in \cite{KM-PL}.


\bigskip
\noindent{\bf Acknowledgement.} We would like to thank Sam Payne, Greg Smith, Kelly Jabbusch, Sandra Di Rocco, Roman Fedorov, Bogdan Ion for useful conversations and email correspondence. The first author is partially supported by National Science Foundation Grants (DMS-1601303 and DMS-2101843) and a Simons Collaboration Grant (award number 714052). The second author is partially supported by National Science Foundation Grants (DMS-1500966 and DMS-2101911) and a Simons Collaboration Grant (award number 587209). 


\bigskip

\noindent{\bf Notation.} Throughout the paper we will use the
following notation: 
\begin{itemize}
\item $\k$ is the base field which we take to be of characteristic $0$. 
\item $E \cong \k^r$ is a finite dimensional $\k$-vector space.
\item $\Delta(E)$ is the Tits building of $\GL(E)$. Its simplices correspond to the flags of subspaces in $E$ (in other words, the parabolic subgroups in $\GL(E)$). Apartments correspond to choices of frames in $E$, that is, a direct sum decompositions of $E$ into $1$-dimensional subspaces (in other words, the maximal tori in $\GL(E)$). 
\item $\B(E)$ denotes the geometric realization of the Tits building of $\GL(E)$. It is an infinite union of $(r-2)$-dimensional spheres, one sphere for each apartment in $\Delta(E)$. Each sphere is partitioned into subsets homeomophic to standard simplices corresponding to simplices in the apartment. The spheres are glued together along common simplices in the corresponding apartments (Section \ref{subsec-val-Tits}).
\item $\tilde{\B}(E)$ denotes the set of all valuations $v: E \to \overline{\R} = \R \cup \{\infty\}$ (Definition \ref{def-val}). We denote the set of integral valuations, i.e. $v: E \to \overline{\Z}$, by $\tilde{\B}_\Z(E)$. The Tits building $\B(E)$ can be obtained as set of equivalence classes of valuation. We say that $v_1 \sim v_2$ if $v_2 = mv_1 + c$ where $m, c \in \R$ with $m > 0$. We refer to $\tilde{\B}(E)$ as the \emph{cone over the Tits building} or the \emph{extended Tits building}  of $\GL(E)$ (see Section \ref{subsec-vs-val} and Section \ref{subsec-val-Tits}). 
\item $T \cong \mathbb{G}_m^n$ denotes a (split) algebraic torus with $M$ and $N$ its character and cocharacter lattices respectively. In general, $M$ and $N$ denote rank $n$ free abelian groups dual to each other. We denote the pairing between them by $\langle \cdot, \cdot \rangle: M \times N \to \Z$. We let $M_\R = M \otimes \R$ and $N_\R = N \otimes \R$ be the corresponding $\R$-vector spaces. 
\item $U_\sigma$ is the affine toric variety corresponding to a (strictly convex rational polyhedral) cone $\sigma \subset N_\R$.
\item $\Sigma$ is a fan in $N_\R$ with corresponding toric variety $X_\Sigma$. We denote the support of $\Sigma$, i.e. the union of cones in it, by $|\Sigma|$. \item $\Phi: |\Sigma| \to \tilde{\B}(E)$ is a piecewise linear map to the space of valuations on a $\k$-vector space $E$ (Section \ref{subsec-PL-maps}).
\item $\PL(N_\R, \R)$ and $\CPL(N_\R, \R)$, the sets of piecewise linear functions and concave piecewise linear functions on the $\R$-vector space $N_\R$ respectively. We denote the set of piecewise linear functions (respectively concave piecewise linear functions) that attain integer values on $N$ by $\PL(N, \Z)$ (respectively $\CPL(N, \Z)$). Finally $\PL(\Sigma, \R)$ (respectively $\PL(\Sigma, \Z)$) denotes the subset of piecewise linear functions (respectively integral piecewise linear functions) that are linear on cones in $\Sigma$ (Section \ref{subsec-preval-plf}). 
\item $\P(M_\R)$, the set of polytopes in the $\R$-vector space $M_\R$. We denote the set of lattice polytopes in $M_\R$ by $\P(M)$ (Section \ref{subsec-preval-plf}).
\end{itemize}

\section{Preliminaries}   \label{sec-prelim}
\subsection{Klyachko classification of toric vector bundles}   \label{sec-prelim-toric-vb}
Let $T \cong \mathbb{G}_m^n$ denote an $n$-dimensional (split) algebraic torus over a field $\k$. We let $M$ and $N$ denote its character and cocharacter lattices respectively. We also denote by $M_\R$ and $N_\R$ the $\R$-vector spaces spanned by $M$ and $N$. For cone $\sigma \in N_\R$ let $M_\sigma$ be the quotient lattice:
$$M_\sigma = M / (\sigma^\perp \cap M).$$
Let $\Sigma$ be a (finite rational polyhedral) fan in $N_\R$ and let $X_\Sigma$ be the corresponding toric variety. Also $U_\sigma$ denotes the invariant affine open subset in $X_\Sigma$ corresponding to a cone $\sigma \in \Sigma$. We denote the support of $\Sigma$, that is the union of all the cones in $\Sigma$, by $|\Sigma|$. For each $i$, $\Sigma(i)$ denotes the subset of $i$-dimensional cones in $\Sigma$. In particular, $\Sigma(1)$ is the set of rays in $\Sigma$. For each ray $\rho \in \Sigma(1)$ we let $\v_\rho$ be the primitive vector along $\rho$, i.e. $\v_\rho$ is the unique vector on $\rho$ whose integral length is equal to $1$.

We say that $\E$ is a {\it toric vector bundle} on $X_\Sigma$ if $\E$ is a vector bundle on $X_\Sigma$ equipped with a $T$-linearization. This means that there is an action of $T$ on $\E$ which lifts the $T$-action on $X_\Sigma$ such that the action map $\E_x \to \E_{t \cdot x}$ for any $t \in T$, $x \in X_\Sigma$ is linear. 

We fix a point $x_0 \in X_0 \subset X_\Sigma$ in the dense orbit $X_0$. We often identify $X_0$ with $T$ and think of $x_0$ as the identity element in $T$. We let $E = \E_{x_0}$ denote the fiber of $\E$ over $x_0$. It is an $r$-dimensional vector space where $r = \rank(\E)$. 

For each cone $\sigma \in \Sigma$, with invariant open subset $U_\sigma \subset X_\Sigma$, the space of sections $\Gamma(U_\sigma, \E)$ is a $T$-module. We let  $\Gamma(U_\sigma, \E)_u \subseteq\Gamma(U_\sigma, \E)$ be the weight space corresponding to a weight $u \in M$. One has the weight decomposition: 

$$\Gamma(U_\sigma, \E) = \bigoplus_{u \in M} \Gamma(U_\sigma, \E)_u.$$ Every section in $\Gamma(U_\sigma, \E)_u$ is determined by its value at $x_0$.  Thus, by restricting sections to $E = \E_{x_0}$, we get an embedding $\Gamma(U_\sigma, \E)_u \hookrightarrow E$. Let us denote the image of $\Gamma(U_\sigma, \E)_u$ in $E$ by $E_u^\sigma$. Note that if $u' \in \sigma^\vee \cap M$ then multiplication by the character $\chi^{u'}$ gives an injection $\Gamma(U_\sigma, \E)_u \hookrightarrow \Gamma(U_\sigma, \E)_{u-u'}$. Moreover, the multiplication map by $\chi^{u'}$ commutes with the evaluation at $x_0$ and hence induces an inclusion $E_u^\sigma \subset E_{u-u'}^\sigma$. If $u' \in \sigma^\perp$ then these maps are isomorphisms and thus $E_u^\sigma$ depends only on the class $[u] \in M_\sigma = M/(\sigma^\perp \cap M)$. For a ray $\rho \in \Sigma(1)$ we write $$E^\rho_i = E_u^\rho,$$ for any $u \in M$ with $\langle u, \v_\rho \rangle = i$ (all such $u$ define the same class in $M_\rho$). Equivalently, one can define $E^\rho_u$ as follows (see \cite[\S 0.1]{Klyachko}). Pick a point $x_\rho$ in the orbit $O_\rho$ and let:
$$E^\rho_u = \{ e \in E \mid \lim_{t \cdot x_0 \to x_\rho} \chi^u(t)^{-1}(t \cdot e) \text{ exists in } \E \},$$
where $t$ varies in $T$ in such a way that $t \cdot x_0$ approaches $x_\rho$. 
We thus have a decreasing filtration of $E$:
\begin{equation}  \label{equ-filt-E-rho}
\cdots \supset E^\rho_{i-1} \supset E^\rho_i \supset E^\rho_{i+1} \supset \cdots
\end{equation}

An important step in the classification of toric vector bundles is that a toric vector bundle over an affine toric variety is {\it equivariantly trivial}. That is, it decomposes $T$-equivariantly as a sum of trivial line bundles. Let $\sigma$ be a strictly convex rational polyhedral cone with corresponding affine toric variety $U_\sigma$. Given $u \in M$, let $\L_u$ be the trivial line bundle $U_\sigma \times \mathbb{A}^1$ on $U_\sigma$ where $T$ acts on $\mathbb{A}^1$ via the character $u$. One observes that in fact the ($T$-equivariant isomorphism class of) toric line bundle $\L_u$ only depends on the class $[u] \in M_\sigma$. Hence we also denote this line bundle by $\L_{[u]}$. One has the following (see \cite[Proposition 2.1.1]{Klyachko}):

\begin{proposition}   \label{prop-toric-vb-over-affine-equiv-trivial}
Let $\E$ be a toric vector bundle of rank $r$ on an affine toric variety $U_\sigma$. Then $\E$ splits equivariantly into a sum of line bundles: 
$$\E = \bigoplus_{i=1}^r \L_{[u_i]},$$
where $[u_i] \in M_\sigma$.
\end{proposition}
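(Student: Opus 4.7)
The natural approach is to translate the statement into commutative algebra: sections of $\E$ over $U_\sigma$ form a finitely generated projective $M$-graded module $F = \Gamma(U_\sigma, \E)$ over the semigroup algebra $R = \O(U_\sigma) = \k[\sigma^\vee \cap M]$, and the assertion becomes that such a module is $M$-graded isomorphic to a direct sum $\bigoplus_{i=1}^r R(-u_i)$ of weight-shifted copies of $R$. The first move is therefore to set up this equivalence between toric vector bundles on $U_\sigma$ and finitely generated projective $M$-graded $R$-modules, so that we may argue purely algebraically.

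Next I would reduce to the case where $\sigma$ is full-dimensional. If $\dim \sigma < n$, let $N_\sigma = \Span_\R(\sigma) \cap N$, view $\sigma$ as a full-dimensional cone $\sigma_0$ in $(N_\sigma)_\R$, and choose a splitting $N = N_\sigma \oplus N'$. This yields a $T$-equivariant product decomposition $U_\sigma \cong U_{\sigma_0} \times T'$, where $T'$ is the torus with cocharacter lattice $N'$. Since $T'$ acts freely and transitively on the second factor, any $T$-equivariant vector bundle on $U_\sigma$ is pulled back from a $T_{N_\sigma}$-equivariant bundle on $U_{\sigma_0}$, and an equivariant splitting of the latter gives an equivariant splitting of the former (with weights $u_i$ that are only well-defined in $M/(\sigma^\perp \cap M) = M_\sigma$, explaining why the data in the statement is indexed by $[u_i] \in M_\sigma$).

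Now assume $\sigma$ is full-dimensional. Then $R$ has a unique $M$-graded maximal ideal $\mathfrak{m} = \k\langle \chi^u \mid u \in (\sigma^\vee \cap M) \setminus \{0\}\rangle$, corresponding to the torus-fixed point $x_\sigma \in U_\sigma$. The fiber $\E_{x_\sigma} = F / \mathfrak{m} F$ is a finite-dimensional $T$-representation, hence splits as $\bigoplus_{i=1}^r \k_{u_i}$ for some $u_i \in M$. Lift a weight basis to homogeneous elements $e_1, \ldots, e_r \in F$ with $e_i$ of weight $u_i$. By the graded version of Nakayama's lemma (applicable because $F$ is finitely generated, $M$-graded, and $\mathfrak{m}$ is the unique graded maximal ideal), the $e_i$ generate $F$ as an $R$-module, so the $M$-graded map
\[
\Psi : \bigoplus_{i=1}^r R(-u_i) \longrightarrow F, \qquad (r_1, \ldots, r_r) \mapsto \sum_i r_i e_i,
\]
is surjective.

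To finish, I would verify $\Psi$ is an isomorphism. Source and target are projective $R$-modules of the same rank $r$: localizing at the generic point of $\Spec R$ (i.e., tensoring with the fraction field $K = \k(T)$) converts $\Psi$ into a $K$-linear surjection $K^r \to K^r$, which is automatically an isomorphism. Hence the kernel of $\Psi$ is a torsion submodule of $\bigoplus_i R(-u_i)$, but that module is torsion-free (in fact free), so $\ker \Psi = 0$. This gives the desired equivariant splitting $F \cong \bigoplus_i R(-u_i)$, which corresponds on the geometric side to $\E \cong \bigoplus_{i=1}^r \L_{[u_i]}$. The main technical point I would expect to require the most care is the reduction step to the full-dimensional case, since one must carefully track that the weights $u_i$ are canonical only up to $\sigma^\perp \cap M$; the rest is a clean application of graded Nakayama once the algebraic setup is in place.
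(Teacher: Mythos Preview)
The paper does not supply its own proof of this proposition: it is stated as background and attributed to Klyachko via the citation ``see \cite[Proposition 2.1.1]{Klyachko}''. So there is no in-paper argument to compare against.

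Your argument is correct and is essentially the standard proof (and close in spirit to Klyachko's). Translating to finitely generated projective $M$-graded modules over $R = \k[\sigma^\vee \cap M]$, reducing to the full-dimensional case via $U_\sigma \cong U_{\sigma_0} \times T'$, and then applying graded Nakayama at the unique graded maximal ideal $\mathfrak{m}$ corresponding to the torus-fixed point is exactly the right line. The only place I would tighten the write-up is the justification of graded Nakayama for the $M$-grading: the cleanest way is to pick a lattice point $v$ in the interior of $\sigma$ and use the linear functional $\langle \cdot, v \rangle$ to induce a $\Z_{\geq 0}$-grading on $R$ with degree-zero piece $\k$, and a $\Z$-grading on $F$ bounded below (since $F$ is finitely generated); then the ordinary graded Nakayama applies. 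Your injectivity step (localize at the generic point, surjection of $K$-vector spaces of equal dimension, kernel torsion in a torsion-free module) is clean and correct. The reduction step is also correctly identified as the place where the ambiguity $[u_i] \in M_\sigma = M/(\sigma^\perp \cap M)$ arises.
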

We denote the multiset $\{ [u_1], \ldots, [u_r]\} \subset M_\sigma$ by $u(\sigma)$. The above shows that, for each $\sigma \in \Sigma$, the filtrations $(E^\rho_i)_{i \in \Z}$, $\rho \in \Sigma(1)$, satisfy the following compatibility condition: 
There is a decomposition of $E$ into a direct sum of $1$-dimensional subspaces indexed by a finite subset $u(\sigma) \subset M_\sigma$:
$$E = \bigoplus_{[u] \in u(\sigma)} L_{[u]},$$
such that for any ray $\rho \in \sigma(1)$ we have:
\begin{equation}  \label{equ-Klyachko-comp-condition}
E^\rho_i = \sum_{\langle u, \v_\rho \rangle \geq i}  L_{[u]}
\end{equation}

\begin{definition}[Compatible collection of filtrations]
We call a collection of decreasing $\Z$-filtrations $\{(E_i^\rho)_{i \in \Z} \mid \rho \in \Sigma(1) \}$ satisfying condition \eqref{equ-Klyachko-comp-condition} a \emph{compatible collection of filtrations}. 
(Moreover, for each $\rho$, we assume $\bigcap_{i \in \Z} E^\rho_i = \{0\}$ and $\bigcup_{i \in \Z} E^\rho_i = E$.)
\end{definition}

Let $E$, $E'$ be finite dimensional $\k$-vector spaces. Let $\{(E_i^\rho)_{i \in \Z} \mid \rho \in \Sigma(1) \}$ (respectively $\{({E'}_i^\rho)_{i \in \Z} \mid \rho \in \Sigma(1) \}$) be compatible collections of filtrations on $E$ (respectively $E'$). We say that a linear map $F: E \to E'$ is a \emph{morphism} from $\{(E_i^\rho)_{i \in \Z} \mid \rho \in \Sigma(1) \}$ to $\{({E'}_i^\rho)_{i \in \Z} \mid \rho \in \Sigma(1) \}$ if for every $\rho \in \Sigma(1)$ and $i \in \Z$ we have $F(E_i^\rho) \subset {E'}_i^\rho$. 
With this notion of morphism, for a fixed fan $\Sigma$, the compatible collections of filtrations on finite dimensional $\k$-vector spaces form a category.

The following is Klaychko's theorem on the classification of toric vector bundles (\cite[Theorem 2.2.1]{Klyachko}). 
\begin{theorem}[Klyachko]   \label{th-Klyachko}
The category of toric vector bundles on $X_\Sigma$ is naturally equivalent to the category of compatible filtrations on finite dimensional $\k$-vector spaces.
\end{theorem}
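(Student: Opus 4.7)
The plan is to construct quasi-inverse functors between the two categories and then verify they are natural equivalences. For the forward direction, I would define a functor $\mathcal{F}$ sending $\E$ to the datum $(E, \{E^\rho_i\})$ where $E = \E_{x_0}$ and $E^\rho_i \subset E$ is the image of $\Gamma(U_\rho, \E)_u \hookrightarrow E$ for any $u \in M$ with $\langle u, \v_\rho\rangle = i$, as described in the excerpt via both the ``evaluation'' and the ``limit'' characterization. That this is a decreasing filtration is immediate from the inclusions $E^\rho_u \subset E^\rho_{u-u'}$ for $u' \in \rho^\vee \cap M$. Compatibility \eqref{equ-Klyachko-comp-condition} on each cone $\sigma$ is read off directly from Proposition \ref{prop-toric-vb-over-affine-equiv-trivial}: the equivariant splitting $\E|_{U_\sigma} = \bigoplus_i \L_{[u_i]}$ picks out a frame $E = \bigoplus_i L_{[u_i]}$ of the fiber realizing all ray filtrations $E^\rho_i$ by the prescribed formula. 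Morphisms of toric bundles restrict to morphisms of compatible filtrations because $T$-equivariant maps send weight-$u$ sections to weight-$u$ sections.

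For the inverse functor $\mathcal{G}$, I would build $\E$ locally on each chart $U_\sigma$ and glue. Given compatible filtrations on $E$, choose for each $\sigma \in \Sigma$ a decomposition $E = \bigoplus_{[u]\in u(\sigma)} L_{[u]}$ witnessing \eqref{equ-Klyachko-comp-condition} for every ray $\rho \prec \sigma$, and set
\begin{equation*}
\E_\sigma \;=\; \bigoplus_{[u] \in u(\sigma)} L_{[u]} \otimes_\k \L_{[u]}|_{U_\sigma}.
\end{equation*}
The key point is that $\E_\sigma$, as a $T$-equivariant bundle on $U_\sigma$, depends only on the filtrations and not on the choice of frame: any two frames realizing the same filtrations differ by an automorphism of $E$ preserving each $E^\rho_i$ for $\rho \prec \sigma$, and such an automorphism prolongs to an equivariant automorphism of the local model. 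For a face $\tau \prec \sigma$, the multiset $u(\tau)$ is the image of $u(\sigma)$ under $M_\sigma \twoheadrightarrow M_\tau$ (with multiplicities), and $\L_{[u]}|_{U_\tau}$ depends only on $[u] \in M_\tau$; this lets one exhibit a canonical $T$-equivariant isomorphism $\E_\sigma|_{U_\tau} \cong \E_\tau$, and these satisfy the cocycle condition by uniqueness, yielding a global toric bundle $\E = \mathcal{G}(\{E^\rho_i\})$. Morphisms of filtrations glue similarly, again using the independence of the local model from frame choices.

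Finally, I would verify $\mathcal{F}\circ\mathcal{G} \cong \mathrm{id}$ and $\mathcal{G}\circ\mathcal{F}\cong\mathrm{id}$. The first is a direct computation: on each ray $\rho$ the weight-$u$ sections of $\bigoplus_{[u']\in u(\rho)} L_{[u']}\otimes \L_{[u']}|_{U_\rho}$ recover exactly $\sum_{\langle u',\v_\rho\rangle \geq \langle u, \v_\rho\rangle} L_{[u']}$, matching $E^\rho_{\langle u,\v_\rho\rangle}$. The second reduces, via Proposition \ref{prop-toric-vb-over-affine-equiv-trivial}, to the affine case, where both bundles are $T$-equivariant sums of the same characters $\L_{[u_i]}$ in the same multiplicities, hence canonically isomorphic once a frame is fixed.

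The main obstacle I foresee is the well-definedness of the local model $\E_\sigma$ together with the canonical gluing isomorphisms $\E_\sigma|_{U_\tau} \cong \E_\tau$. Concretely, one must show that the subgroup of $\GL(E)$ preserving every $E^\rho_i$ for $\rho \prec \sigma$ acts transitively on the set of compatible $\sigma$-frames, and that the induced equivariant automorphisms on the local models are compatible under face restriction. This is essentially a structural statement about how the parabolic stabilizers of the ray filtrations intersect and generate a common Levi along each face, and it is the step where the hypothesis of \emph{compatibility} (not just the existence of separate ray filtrations) does all the work.
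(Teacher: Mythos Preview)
The paper does not prove this statement: Theorem~\ref{th-Klyachko} is stated as background and attributed to \cite[Theorem 2.2.1]{Klyachko}, with no proof given here. So there is nothing in the paper to compare your argument against.

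That said, your outline is the standard strategy and is essentially correct. One comment on the point you flag as the main obstacle: rather than proving frame-independence of the local model $\E_\sigma$ by analyzing the stabilizer subgroup of $\GL(E)$, it is cleaner to construct $\E_\sigma$ intrinsically as an $M$-graded $\k[\sigma^\vee\cap M]$-module, namely
\[
\Gamma(U_\sigma,\E) \;=\; \bigoplus_{u\in M}\ \bigcap_{\rho\in\sigma(1)} E^\rho_{\langle u,\v_\rho\rangle}\,.
\]
This depends only on the filtrations, so no choice of frame enters; the compatibility condition \eqref{equ-Klyachko-comp-condition} is then exactly what guarantees that this graded module is free of rank $r$ over $\k[\sigma^\vee\cap M]$ (a splitting frame exhibits generators). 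With this definition the face restriction $\E_\sigma|_{U_\tau}\cong\E_\tau$ is the obvious inclusion of graded pieces, and the cocycle condition is automatic. Your version via frames and parabolic stabilizers works too, but the transitivity statement you would need (that the common stabilizer of the $E^\rho_i$ for $\rho\prec\sigma$ acts transitively on compatible $\sigma$-frames) is itself most easily proved by reducing to the graded-module description, so you may as well start there.
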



\subsection{Vector space valuations with values in real numbers} \label{subsec-vs-val}
In this section we consider the notion of a valuation on a vector space $E$ with values in $\R$. In Section \ref{subsec-PL-maps} we interpret the Klyachko data of compatible filtrations, for a toric vector bundle $\E$ on $X_\Sigma$ as an (integral) \emph{piecewise linear map} $\Phi$ from $|\Sigma|$ to the space $\tilde{\B}(E)$ of all valuations on $E$. 
We remark that the piecewise linear map $\Phi$ is essentially contained in Payne’s
observation in \cite{Payne-cover} that the Klyachko data of a toric vector bundle can be used to construct a filtration-valued function on $|\Sigma|$.
This is also a special case of the main result in \cite{KM-tpb} where torus equivariant principal $G$-bundles over $X_\Sigma$, where $G$ is a reductive algebraic group, are classified in terms of \emph{piecewise linear maps to the (extended) Tits building of $G$}. 

\begin{definition}[Vector space valuation]   \label{def-val}
Let $E$ be a finite dimensional $\k$-vector space.
We call a function $v: E \to \overline{\R} = \R \cup \{\infty \}$ a {\it vector space valuation} (or a \emph{valuation} for short) if the following hold:
\begin{itemize}
\item[(1)] For all $e \in E$ and $0 \neq c \in \k$ we have $v(ce) = v(e)$. 
\item[(2)](Non-Archimedean property) For all $e_1, e_2 \in E$, $v(e_1+e_2) \geq \min\{v(e_1), v(e_2)\}$.
\item[(3)] $v(e)=\infty$ if and only if $e = 0$.
\end{itemize}
We call a valuation $v$ {\it integral} if it attains only integer values, i.e. $v: E \to \overline{\Z}$. 
\end{definition}

\begin{remark}
Here are two remarks about the term \emph{valuation}:
\begin{itemize}
\item[(i)] In commutative algebra the term valuation usually refers to a valuation on a ring or algebra. Throughout most of this paper, we will use the term valuation to mean a valuation on a vector space. Later in Section \ref{subsec-semilattice-val} we define the more general notion of a \emph{semilattice valuation} that is a valuation with values in a semilattice (in place of $\R$). 

\item[(ii)] In \cite[Section 2.1]{KKh-Annals} (and some other papers) the term \emph{prevaluation} is used for a valuation on a vector space (to distinguish it from valuations on rings). 
\end{itemize}
\end{remark}

The {\it value set} $v(E)$ of a valuation $v$ is the image of $E \setminus \{0\}$ under $v$, i.e.
$$v(E) = \{ v(e) \mid 0 \neq e \in E\}.$$ It is easy to verify that $|v(E)| \leq \dim(E)$ and hence $v(E)$ is finite. Each integral valuation $v$ on $E$ gives rise to a filtration $E_{v, \bullet} = (E_{v \geq a})_{a \in \Z}$ on $E$ by vector subspaces defined by: 
$$E_{v \geq a} = \{ e \in E \mid v(e) \geq a\}.$$
If $v(E) = \{a_1 > \cdots > a_k\}$ then we have a flag: $$F_{v, \bullet} = (\{0\} \subsetneqq F_1 \subsetneqq \cdots \subsetneqq F_k=E),$$ where $F_i = E_{v \geq a_i}$. We note that the valuation $v$ is uniquely determined by the flag $F_{v, \bullet}$ and the $k$-tuple $(a_1 > \cdots > a_k)$.
Conversely, a decreasing filtration $E_\bullet = (E_a)_{a \in \Z}$ such that 
\begin{equation} \label{equ-flitration-conditions}
\bigcap_{a \in \Z} E_a = \{0\}, \text{ and } \bigcup_{a \in \Z} E_a = E, 
\end{equation}
defines a valuation $v_{E_\bullet}$ by:
$$v_{E_\bullet}(e) = \max\{ a \in \Z \mid e \in E_a\},$$ for all $e \in E$. 
The following is straightforward to verify. 
\begin{proposition}   \label{prop-preval-filtration}
The assignments $v \mapsto E_{v, \bullet}$ and $v \mapsto (F_{v, \bullet}, (a_1 >  \cdots > a_k))$ give one-to-one correspondences between the following sets: 
\begin{itemize}
\item[(i)] The set of integral valuations $v: E \to \overline{\Z}$. 
\item[(ii)] The set of decreasing  $\Z$-filtrations $E_\bullet$ on $E$ satisfying \eqref{equ-flitration-conditions}.
\item[(iii)] The set of flags $F_\bullet = (\{0\} \subsetneqq F_1 \subsetneqq \cdots \subsetneqq F_k = E)$ together with tuples of integers $(a_1 > \cdots > a_k)$. 
\end{itemize}
\end{proposition}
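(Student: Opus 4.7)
The plan is to verify that each of the three displayed assignments is well-defined, that the maps between (i) and (ii) are mutually inverse, and that (ii) and (iii) are two equivalent packagings of the same data. The statement is essentially a matter of unwinding definitions; no step presents a genuine difficulty. The only care required is to ensure that the maximum defining $v_{E_\bullet}$ really is attained in $\Z$.

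Starting from an integral valuation $v$, I would first verify that each $E_{v \geq a}$ is a vector subspace: closure under scalars is immediate from axiom (1) together with the convention $v(0)=\infty$, and closure under sums is exactly the non-Archimedean inequality (2). The chain is decreasing by definition. Axiom (3) forces $\bigcap_a E_{v \geq a} = \{0\}$, since any nonzero vector has finite valuation; conversely any nonzero $e$ lies in $E_{v \geq v(e)}$, giving $\bigcup_a E_{v \geq a} = E$. Conversely, given a filtration $E_\bullet$ satisfying \eqref{equ-flitration-conditions}, the union condition and the intersection condition together ensure that for each nonzero $e$ the set $\{a \in \Z \mid e \in E_a\}$ is nonempty and bounded above, so the maximum exists in $\Z$; one extends by $v_{E_\bullet}(0)=\infty$. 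Axioms (1) and (3) are then immediate, and axiom (2) follows because $e_1, e_2 \in E_m$ implies $e_1+e_2 \in E_m$. That the two constructions are mutually inverse is a direct check: the equivalence $e \in E_a \iff v_{E_\bullet}(e) \geq a$ recovers $E_\bullet$ from $v_{E_\bullet}$, and $\max\{a \mid v(e) \geq a\}=v(e)$ recovers $v$.

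For the correspondence between (ii) and (iii), note that since $\dim E < \infty$, any strictly decreasing chain of subspaces of $E$ has at most $\dim E$ terms, so the filtration $E_\bullet$ has only finitely many distinct nontrivial terms and jumps at finitely many integers $a_1 > \cdots > a_k$. Setting $F_i = E_{a_i}$ yields a strictly increasing flag with $F_k = E$ (by the union condition), together with the integer tuple $(a_1 > \cdots > a_k)$. The inverse recovers $E_\bullet$ by declaring $E_a$ to be $F_i$ for the largest index $i$ with $a_i \geq a$, and $E_a = \{0\}$ when $a > a_1$. Since every step is a routine verification of the definitions, I do not anticipate any real obstacle.
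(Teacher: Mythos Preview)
Your proposal is correct and complete. The paper itself does not supply a proof of this proposition beyond the remark ``The following is straightforward to verify,'' so your explicit unwinding of the definitions is exactly the kind of argument the paper leaves to the reader, and your care in checking that the maximum defining $v_{E_\bullet}$ is attained and that the (ii)$\leftrightarrow$(iii) correspondence is well-posed is appropriate.
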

Recall that a frame $L = \{L_1, \ldots, L_r\}$ for $E$ is a collection of $1$-dimensional subspaces $L_i$ such that $E = \bigoplus_{i=1}^r L_i$. We say that a valuation $v$ is \emph{adapted} to a frame $L$ if every subspace $E_{v \geq a}$ is a sum of some of the $L_i$. This is equivalent to the following: For any $e \in E$ let us write $e = \sum_i e_i$ where $e_i \in L_i$. Then:
\begin{equation}  \label{equ-val-min}
v(e) = \min\{ v(e_i) \mid i=1, \ldots, r \}.
\end{equation}
If a valuation $v$ is adapted to a frame $L$, then $v$ is uniquely determined by the $r$-tuple $(v(L_1), \ldots, v(L_r))$. Conversely, any $r$-typle $(a_1, \ldots, a_r) \in \R^r$ determines a unique valuation $v$ adapted to $L$ by requiring that $v(e_i) = a_i$, for all $i=1, \ldots, r$ and $0 \neq e_i \in L_i$. In other words, $v$ is given by $v(e) = \min\{a_i \mid e_i \neq 0\}$.

\begin{definition}[Space of valuations]   \label{def-space-of-val}
We denote by $\tilde{\B}(E)$ the set of all $\R$-valued valuations $v: E \to \overline{\R}$. We also denote the set of all $\Z$-valued valuations on $E$ (that is, the set of integral valuations on $E$) by $\tilde{\B}_\Z(E)$. For a frame $L$, we denote the set of valuations adapted to $L$ by $\tilde{A}(L)$. Also we denote by $\tilde{A}_\Z(L)$ the set of $\Z$-valued valuations adapted to $L$. As discussed above, $\tilde{A}(L)$ (respectively $\tilde{A}_\Z(L)$) can be identified with $\R^r$ (respectively $\Z^r$).
\end{definition}

The space of valuations has a natural partial order.
\begin{definition}[Partial order on the space of valuations]   \label{def-po-valuation}
Let $v$, $w: E \to \overline{\R}$ be valuations on $E$. We say that $v \leq w$ if $v(e) \leq w(e)$ for all $e \in E$.  
\end{definition}

We can also pull-back valuations by linear maps. 
\begin{definition}[Pull-back of a valuation] \label{def-pull-back-valuation}
Let $F: E \to E'$ be a linear map between finite dimensional $\k$-vector spaces $E$ and $E'$. For a valuation $w: E' \to \overline{\R}$, we define the pull-back $F^*(w): E \to \overline{\R}$ by:
$$F^*(w) = w \circ F.$$
We note that $F^*(w)$ satisfies the conditions (1) and (2) in the definition of a valuation (Definition \ref{def-val}) except that it may not be true that $F^*(w)(e) = \infty$ only for $e = 0$. Thus, $F^*(w)$ is a valuation on $E$ only if $F$ is one-to-one.     
\end{definition}

\subsection{The space of valuations and Tits building}
\label{subsec-val-Tits}
The space of valuations $\tilde{\B}(E)$ gives a nice way of constructing the geometric realization of the Tits building of the general linear group $\GL(E)$. Below, we briefly review the notion of the Tits building of a linear algebraic group and explain how the space of valuations is related to the Tits building of $\GL(E)$. For more details and other related material we refer the reader to \cite[Sections 1.2 and 1.3]{KM-tpb}.

We begin by recalling the definition of an (abstract) building (see \cite[Definition 4.1]{Abramenko-Brown}). 
The axioms B(2) and B(3) make an appearance later in the section (Lemmas \ref{lem-B3-bldg} and \ref{lem-B3-val}) as well as in Section \ref{subsec-tvb-positivity}.
\begin{definition}[Building]  \label{def-bldg}
A \emph{building} is an (abstract) simplicial complex $\Delta$ that can be expressed as a union of subcomplexes called \emph{apartments} satisfying the following axioms:
\begin{itemize}
\item[(B1)] Each apartment is a Coxeter complex (associated to a Coxeter group).
\item[(B2)] For any two simplices in $\Delta$ there is an apartment containing both of them. 
\item[(B3)] For any two apartments, there is a (simplicial) isomorphism between them that fixes all the simplices in their intersection. 
\end{itemize}
\end{definition}

To any linear algebraic group $G$ over a field $\k$, there corresponds a building, which we denote by $\Delta(G)$, called its \emph{Tits building}. 
The building $\Delta(G)$ and its apartments encode the relative position of parabolic subgroups and maximal tori in $G$. Each apartment in $\Delta(G)$ is a copy of the Coxeter complex associated to the Weyl group of $G$. Tits buildings are among the most important examples of the general notion of a building. 

The simplices in $\Delta(G)$ correspond to the parabolic subgroups in $G$ ordered by reverse inclusion. That is, for parabolic subgroups $P$, $Q$ with simplices $\sigma_P$, $\sigma_Q$ we have $\sigma_P < \sigma_Q$ if $Q \subset P$. 
The apartments in $\Delta(G)$ correspond to maximal tori in $G$.  
For a maximal torus $H$ and a parabolic subgroup $P$, the simplex $\sigma_P$ lies in the apartment $\Sigma(H)$ if $H \subset P$. 

Since all the parabolic subgroups contain the radical $R(G)$ of $G$, one sees that, as simplicial complexes, the Tits building of $G$ and its semisimple quotient $G/R(G)$ coincide. 

The simplicial complex $\Delta(G)$ has a natural \emph{geometric realization}. That is, one constructs a topological space $\B(G)$ together with a triangulation where the simplices in the triangulation of $\B(G)$ correspond to the simplices in $\Delta(G)$. In $\B(G)$, each apartment becomes a triangulation of a sphere (hence $\Delta(G)$ is also referred to as a \emph{spherical building}). We skip the details of the construction of $\B(G)$ (see \cite[Sections 1.2 and 1.3]{KM-tpb}).

\begin{remark}   \label{rem-bldg-notation}
While in our notation we distinguish between the building $\Delta(G)$ (which is a simplicial complex) and its geometric realization $\B(G)$ (which is a topological space), by abuse of terminology we may refer to both $\Delta(G)$ and $\B(G)$ as the Tits building of $G$.  
\end{remark}

The example that interests us in this paper is the Tits building of the general linear group $\GL(E)$ which we now describe. As usual let $E$ be an $r$-dimensional vector space over a field $\k$. Note that, as a simplicial complex, the Tits building of the general linear group $\GL(E)$ and those of $\PGL(E)$ and $\SL(E)$ are all the same. For simplicity, let us denote the Tits building of $\GL(E)$ by $\Delta(E)$. It can be described as follows: the simplices (parabolic subgroups) correspond to the flags of subspaces in $E$:
$$F_\bullet = (\{0\} \subsetneqq F_1 \subsetneqq \cdots \subsetneqq F_k = E).$$
The apartments (maximal tori) correspond to the frames $L = \{L_1, \ldots, L_r\}$, i.e. direct sum decompositions of $E$ into $1$-dimensional subspaces $L_1, \ldots, L_r$. 
A flag $F_\bullet$ is said to be \emph{adapted} to a frame $L$ if every subspace $F_i$ is spanned by a subset of the frame $L$. The apartment corresponding to $L$ consists of all flags adapted to $L$. 
We note that the collection of flags adapted to $L$ can be identified with the collection of ordered partitions of $\{1, \ldots, r\}$. By an ordered partition we mean a $k$-tuple of subsets $(J_1, \ldots, J_k)$, for $0 < k \leq r$, that partition $\{1, \ldots, r\}$. To an ordered partition $J=(J_1, \ldots, J_k)$ there corresponds the flag $F_{L, J, \bullet}$ defined by:
$$F_{L, J, i} = \sum_{j \in J_1 \cup \cdots \cup J_i} L_j, \quad i=1, \ldots, k.$$  
We thus see that each apartment is isomorphic to the Coxeter complex of the symmetric group $S_r$.

We now turn to the task of describing the geometric realization of the Tits building $\Delta(E)$. For simplicity, let us denote this space by $\B(E)$. The space $\tilde{\B}(E)$ of $\R$-valued valuations $E$ gives a natural and convenient way to construct the geometric realization $\B(E)$ as follows. First, we note that if $v$ is a valuation on $E$, then for any $m, c$ with $m > 0$, $mv+c$ is also a valuation on $E$. For valuations $v$, $w$ on $E$, we say that $v \sim w$ if there exist real numbers $m$, $c$ with $m > 0$ such that $w = mv + c$. One has the following:
\begin{proposition}[Tits building as space of valuations]    \label{prop-bldg-val}
The space $\B(E)$ can be realized as the quotient space $$\B(E) = \tilde{\B}(E) / \sim,$$ 
i.e., the space of equivalence classes of $\R$-valued valuations on $E$. Moreover, for each frame $L$, the corresponding apartment $A(L)$ can be realized as the quotient $\tilde{A}(L) / \sim$, i.e., the space of equivalence classes of valuations adapted to $L$. 
\end{proposition}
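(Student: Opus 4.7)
The plan is to use the bijection from Proposition \ref{prop-preval-filtration} between valuations $v: E \to \overline{\R}$ and pairs $(F_{v,\bullet}, (a_1 > \cdots > a_k))$ consisting of a flag $F_{v,\bullet} = (\{0\} \subsetneqq F_1 \subsetneqq \cdots \subsetneqq F_k = E)$ and a strictly decreasing real tuple of the distinct values of $v$. I will use this to produce a cell decomposition of $\tilde{\B}(E)/\sim$ matching the geometric realization of the simplicial complex $\Delta(E)$.

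First, I would observe that if $w = mv + c$ with $m > 0$ then $E_{w \geq ma+c} = E_{v \geq a}$, so $v$ and $w$ determine the same flag, while the value tuple transforms by $(a_1, \ldots, a_k) \mapsto (ma_1 + c, \ldots, ma_k + c)$. Hence the flag is a well-defined invariant of the class $[v]$, and the equivalence classes with a fixed flag of length $k$ are in bijection with the quotient of the open cone $\{a_1 > \cdots > a_k\} \subset \R^k$ by the free affine $\R_{>0} \ltimes \R$ action. This quotient is homeomorphic to the open standard $(k-2)$-simplex (the degenerate case $k=1$ of a constant-valued valuation collapses to a single class, corresponding to the empty/trivial simplex). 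Since flags of length $k$ correspond precisely to the $(k-2)$-simplices of $\Delta(E)$, the proposition reduces to checking that face relations match: a degeneration $a_i \to a_{i+1}$ coarsens the flag by removing $F_i$, which is exactly the collapse yielding the corresponding codimension-one face of the $(k-2)$-simplex. Iterating, one recovers the full face poset of $\B(E)$.

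For the apartment statement, I would use that $v$ is adapted to a frame $L$ iff each $E_{v \geq a}$ is a sum of some of the $L_j$, iff the flag $F_{v,\bullet}$ is adapted to $L$; hence $\tilde{A}(L)/\sim$ is in bijection with the flags adapted to $L$, i.e.\ with the simplices of the apartment $A(L)$. Concretely, $\tilde{A}(L) \cong \R^r$ via $v \mapsto (v(L_1), \ldots, v(L_r))$, and the quotient of $\R^r$ by the diagonal $\R_{>0} \ltimes \R$ action is $S^{r-2}$, matching the underlying space of the Coxeter complex of the Weyl group $S_r$ of $\GL(E)$. The main obstacle is to pin down a topology on $\tilde{\B}(E)$ compatibly across apartments and verify that the resulting quotient topology on $\tilde{\B}(E)/\sim$ agrees with the CW topology on $\B(E)$; the natural choice is the colimit topology given by $\tilde{A}(L) \cong \R^r$ glued along intersections, and one must check that convergence of valuations in this topology descends to the expected face identifications in the geometric realization of $\Delta(E)$.
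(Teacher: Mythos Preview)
The paper does not actually prove this proposition: it is stated as a known fact, with the construction of $\B(G)$ deferred to \cite{KM-tpb}. Your approach---stratify $\tilde{\B}(E)$ by the underlying flag $F_{v,\bullet}$ via Proposition~\ref{prop-preval-filtration}, observe that $\sim$ preserves the flag and acts on the value tuple by positive affine maps, identify each stratum modulo $\sim$ with an open $(k-2)$-simplex, and then match face relations with incidences of flags---is the standard verification and is correct.

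Two small remarks. First, in the $k=1$ case the quotient yields a single class, but the trivial flag $\{0\}\subsetneqq E$ corresponds to the empty simplex of $\Delta(E)$, which contributes nothing to the geometric realization; so strictly speaking $\tilde{\B}(E)/\!\sim$ is $\B(E)$ together with one extra ``cone point'', and you should either exclude constant valuations or say this explicitly. Second, the topology issue you flag is handled exactly as you suggest: the paper (implicitly, following \cite{KM-tpb}) takes the colimit topology along the inclusions $\tilde{A}(L)\cong\R^r\hookrightarrow\tilde{\B}(E)$, and on each apartment the quotient $\R^r\to\R^r/(\R_{>0}\ltimes\R)\cong S^{r-2}$ is the standard one, so compatibility across apartments is automatic from the fact that the intersection $\tilde{A}(L)\cap\tilde{A}(L')$ is a union of Weyl chambers in each copy of $\R^r$.
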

Thus, we will also refer to $\tilde{\B}(E)$ as the \emph{extended Tits building} or the \emph{cone over the Tits building} of $\GL(E)$.
We refer to $\tilde{A}(L)$, the set of valuations adapted to $L$, as an \emph{extended apartment} or a \emph{cone over an apartment}.


Recall (paragraph before Definition \ref{def-space-of-val}) that, for each frame $L$, the extended apartment $\tilde{A}(L)$ can be regarded as a copy of $\R^r$. It is then easy to see that the apartment $A(L) = \tilde{A}(L) / \sim$ can be regarded as a copy of the unit $(r-2)$-dimensional sphere. To see $\tilde{A}(L) \cong \R^r$, for any $a=(a_1, \ldots, a_r) \in \R^r$, define a valuation $v_{L, a}: E \to \overline{\R}$ by:
\begin{equation}  \label{equ-v-L-a}
v_{L, a}(e) = \min\{a_i \mid e_i \neq 0\},
\end{equation}
where, as before, $e = \sum_i e_i$ with $e_i \in L_i$.
We identify $\tilde{A}(L)$ with $\R^r$ by identifying $v_{L, a}$ with $a$.   

{We end this section by illustrating the axioms B(2) and (B3) in Definition \ref{def-bldg} in the case of the Tits building $\Delta(E)$ of the group $\GL(E)$. Interestingly, these show up in the study of splitting and positivity properties of toric vector bundles (see \cite[Lemma 5.4]{HMP}, Corollary \ref{cor-equiv-split-P1} and Section \ref{subsec-tvb-positivity}). 

The axiom (B2) for the Tits building $\Delta(E)$ becomes the following well-known linear algebra fact (see \cite[Section 9.2]{Garrett}):
\begin{lemma}  \label{lem-bldg-B2}
Given any two flags in $E$, there is a frame such that both flags are adapted to it.
\end{lemma}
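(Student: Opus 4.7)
The plan is to produce the desired frame by refining the two flags using their intersection lattice and then choosing successive complements. Let $F_\bullet = (0 \subsetneq F_1 \subsetneq \cdots \subsetneq F_k = E)$ and $G_\bullet = (0 \subsetneq G_1 \subsetneq \cdots \subsetneq G_\ell = E)$, and form the bifiltration
\[
V_{i,j} = F_i \cap G_j, \qquad 0 \le i \le k, \ 0 \le j \le \ell,
\]
with the convention $V_{0,j} = V_{i,0} = 0$. Note that $V_{i,\ell} = F_i$ and $V_{k,j} = G_j$, so if we manage to decompose each $V_{i,j}$ into a sum of fixed $1$-dimensional pieces then both flags will automatically be adapted to the resulting frame.

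Next, I proceed by induction on the partial order on pairs $(i,j)$. For each $(i,j)$ I choose a vector subspace $W_{i,j} \subset V_{i,j}$ such that
\[
V_{i,j} = \bigl(V_{i-1,j} + V_{i,j-1}\bigr) \oplus W_{i,j}.
\]
I then claim, by strong induction, that $V_{i,j} = \bigoplus_{(i',j') \le (i,j)} W_{i',j'}$ as an internal direct sum. Granting this, the collection of all $W_{i,j}$ gives an internal direct sum decomposition $E = V_{k,\ell} = \bigoplus_{i,j} W_{i,j}$, and splitting each $W_{i,j}$ arbitrarily into a direct sum of lines yields a frame $L = \{L_1,\ldots,L_r\}$ of $E$ adapted to both flags, since $F_i = V_{i,\ell}$ and $G_j = V_{k,j}$ are each sums of the $W_{i',j'}$ lying below them in the bifiltration.

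The main obstacle is verifying that the $W_{i,j}$ really assemble into a direct sum, since the sum $V_{i-1,j} + V_{i,j-1}$ is not a direct sum (they overlap in $V_{i-1,j-1}$). The trick is a dimension count: by the inductive hypothesis applied to $V_{i-1,j}$, $V_{i,j-1}$, and $V_{i-1,j-1}$ together with inclusion--exclusion on the lower order ideal $\{(i',j') \le (i-1,j)\} \cup \{(i',j') \le (i,j-1)\}$ of indices, one obtains
\[
\dim\bigl(V_{i-1,j} + V_{i,j-1}\bigr)
= \dim V_{i-1,j} + \dim V_{i,j-1} - \dim V_{i-1,j-1}
= \sum_{(i',j')} \dim W_{i',j'},
\]
where the last sum is over that lower order ideal. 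This forces the internal sum of the $W_{i',j'}$ with $(i',j') < (i,j)$ to be direct and equal to $V_{i-1,j} + V_{i,j-1}$, so adding $W_{i,j}$ gives the claim. Combined with the construction, this proves the existence of a common adapted frame for $F_\bullet$ and $G_\bullet$.
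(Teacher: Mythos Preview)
Your argument is correct. The paper itself does not give a proof of this lemma; it simply states it as a well-known linear algebra fact and cites \cite[Section 9.2]{Garrett}, so there is no in-paper proof to compare against. Your bifiltration-and-complements construction is the standard direct argument (essentially the content behind the Bruhat decomposition for $\GL(E)$), and the dimension count goes through as you say.

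One small point worth making explicit in your write-up: the equality
\[
\dim\bigl(V_{i-1,j} + V_{i,j-1}\bigr) = \dim V_{i-1,j} + \dim V_{i,j-1} - \dim V_{i-1,j-1}
\]
relies on $V_{i-1,j} \cap V_{i,j-1} = V_{i-1,j-1}$, which you use but do not state. It follows immediately from $(F_{i-1}\cap G_j)\cap(F_i\cap G_{j-1}) = F_{i-1}\cap G_{j-1}$ since $F_{i-1}\subset F_i$ and $G_{j-1}\subset G_j$; adding this one line makes the inclusion--exclusion step transparent.
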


Next lemma is the linear algebra fact that implies the axiom (B3) for the Tits building $\Delta(E)$ (see \cite[Section 9.2]{Garrett}). Let $L$, $L'$ be frames for $E$.
Let $\pi: \{1, \ldots, r\} \to \{1, \ldots, r\}$ be a permutation. It gives a bijection between the frames $L$ and $L'$ by $L_i \mapsto L'_{\pi(i)}$ which in turn induces a simplicial isomorphism between the corresponding apartments as follows: for every ordered partition $J = (J_1, \ldots, J_k)$, we send the flag $F_{L, J, \bullet}$ to $F_{L', \pi(J), \bullet}$.
\begin{lemma}   \label{lem-B3-bldg}
With notation as above, there is a permutation $\pi: \{1, \ldots, r\} \to \{1, \ldots, r\}$ such that the corresponding simplicial isomorphism between the apartments of $L$ and $L'$, fixes every flag that is adapted to both $L$ and $L'$. 
\end{lemma}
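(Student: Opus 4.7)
The plan is to describe explicitly the combinatorial structure of the ``common part'' of the two apartments, and to build $\pi$ from this structure. The key observation is that the subspaces of $E$ appearing in both apartments form a finite sublattice of the subspace lattice of $E$, closed under sum and intersection, and so can be generated by its atoms.

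First, I would introduce the collection $\mathcal{S}$ of subspaces $F \subseteq E$ such that $F = \sum_{i \in I} L_i = \sum_{j \in J} L'_j$ for some $I, J \subseteq \{1, \ldots, r\}$, so that $F$ is a subspace occurring simultaneously in the apartment of $L$ and in the apartment of $L'$. A flag adapted to both frames is exactly a chain $\{0\} \subsetneq F_1 \subsetneq \cdots \subsetneq F_k = E$ in $\mathcal{S}$. Since $\mathcal{S}$ is closed under intersection (the intersection of two sums of $L_i$'s is again a sum of $L_i$'s, and similarly for $L'_j$'s) and under sum, it is a finite distributive lattice. Hence, for every $F \in \mathcal{S}$, the collection $\{I \subseteq \{1, \ldots, r\} : F = \sum_{i \in I} L_i\}$ is in fact a singleton, so $F$ determines its index sets $I(F) \subseteq \{1, \ldots, r\}$ and $J(F) \subseteq \{1, \ldots, r\}$ unambiguously.

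Next I would identify the atoms of $\mathcal{S}$, that is, the minimal nonzero elements of $\mathcal{S}$. Because $\mathcal{S}$ is a distributive lattice closed under intersection inside the subspace lattice of $E$, every $F \in \mathcal{S}$ is a sum of atoms in a unique way, and the atoms are pairwise of disjoint support both in $L$ and in $L'$. In particular, the index sets $I(A)$ for atoms $A$ partition $\{1, \ldots, r\}$ into blocks $B_1, \ldots, B_s$, and likewise the $J(A)$ partition $\{1, \ldots, r\}$ into blocks $B'_1, \ldots, B'_s$ with $|B_k| = |B'_k| = \dim A_k$ (here $A_k$ is the atom with $I(A_k) = B_k$ and $J(A_k) = B'_k$).

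Now I define $\pi$ as any bijection of $\{1, \ldots, r\}$ that restricts to some bijection $B_k \to B'_k$ for each $k$; the individual bijection within a block is arbitrary. To verify the claim, take any flag $F_{L, J, \bullet}$ adapted to both frames. Each subspace $F_i$ in the flag lies in $\mathcal{S}$ and hence is a sum of a subset of the atoms, so $I(F_i) = \bigsqcup_{k \in K_i} B_k$ and $J(F_i) = \bigsqcup_{k \in K_i} B'_k$ for the same index set $K_i$. By construction $\pi(I(F_i)) = J(F_i)$, so the simplicial isomorphism sends $F_{L, J, \bullet}$ to $F_{L', \pi(J), \bullet} = F_{L, J, \bullet}$.

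The only delicate point is the atomic decomposition in the lattice $\mathcal{S}$: I would need to verify that $\mathcal{S}$ really is closed under intersection (which uses the fact that each $L_i$ is $1$-dimensional, so $(\sum_{i \in I_1} L_i) \cap (\sum_{i \in I_2} L_i) = \sum_{i \in I_1 \cap I_2} L_i$), and that the atomic decomposition in $L$ indexing matches the one in $L'$ block by block. This matching is essentially forced: if an atom $A \in \mathcal{S}$ had $I(A)$ meeting two different $L'$-blocks nontrivially, one could split $A$ as a sum of two smaller elements of $\mathcal{S}$, contradicting minimality. Once this bookkeeping is in place, the rest of the argument is formal.
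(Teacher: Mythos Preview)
Your approach has a genuine gap. The claim that the atoms of $\mathcal{S}$ have index sets partitioning $\{1,\ldots,r\}$ is false: a finite distributive lattice need not be atomic, so not every element of $\mathcal{S}$ is a join of atoms, and in particular the atoms need not cover every index.

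Here is a concrete counterexample. Take $r=3$, $E=\k^3$ with standard basis $e_1,e_2,e_3$, and frames $L=\{\langle e_1\rangle,\langle e_2\rangle,\langle e_3\rangle\}$ and $L'=\{\langle e_1\rangle,\langle e_1+e_2\rangle,\langle e_3\rangle\}$. Then
\[
\mathcal{S}=\{\,\{0\},\ \langle e_1\rangle,\ \langle e_3\rangle,\ \langle e_1,e_2\rangle,\ \langle e_1,e_3\rangle,\ E\,\}.
\]
The atoms of $\mathcal{S}$ are $\langle e_1\rangle$ and $\langle e_3\rangle$, with $I$-index sets $\{1\}$ and $\{3\}$; the index $2$ is not covered, so your blocks $B_k$ do not partition $\{1,2,3\}$. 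Correspondingly, $\langle e_1,e_2\rangle\in\mathcal{S}$ is \emph{not} a sum of atoms. Your disjointness argument in the last paragraph is fine (distinct atoms do have disjoint support), but covering fails, and that is exactly what you need to define $\pi$ on all of $\{1,\ldots,r\}$.

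The strategy can be repaired: instead of the atoms of $\mathcal{S}$, use the atoms of the Boolean sublattice of \emph{complemented} elements of $\mathcal{S}$ (those $F\in\mathcal{S}$ admitting $G\in\mathcal{S}$ with $F\cap G=0$ and $F+G=E$). In the example above these atoms are $\langle e_3\rangle$ and $\langle e_1,e_2\rangle$, whose index sets $\{3\}$ and $\{1,2\}$ do partition $\{1,2,3\}$, and $\pi=\mathrm{id}$ works. In general you must prove that this Boolean sublattice has atoms whose index sets partition $\{1,\ldots,r\}$ on both sides; this requires an argument (essentially a Jordan--H\"older type refinement) that is absent from your write-up. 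The paper itself does not give a proof and simply refers to \cite[Section~9.2]{Garrett}, where the argument proceeds by such a refinement rather than via the lattice $\mathcal{S}$.
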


Similarly, a permutation $\pi: \{1, \ldots, r\} \to \{1, \ldots, r\}$ gives an identification of $\tilde{A}(L)$ with $\tilde{A}(L')$ as follows: for each $a=(a_1, \ldots, a_r)$, we send the valuation $v_{L, a}$ to $v_{L', \pi^{-1}(a)}$. Lemma \ref{lem-B3-bldg} immediately implies the following statement about valuations adapted to $L$ and $L'$.
\begin{lemma}  \label{lem-B3-val}
With notation as above, there is a permutation $\pi: \{1, \ldots, r\} \to \{1, \ldots, r\}$ such that the corresponding identification of $\tilde{A}(L)$ and $\tilde{A}(L')$ fixes every valuation that is adapted to both $L$ and $L'$. 
\end{lemma}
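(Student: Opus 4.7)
The plan is to reduce this directly to Lemma \ref{lem-B3-bldg} by passing from valuations to their associated flags. Recall from Proposition \ref{prop-preval-filtration} that every valuation determines (and is determined by) a flag together with a tuple of values. The key observation is that if a valuation $v$ is adapted to a frame $L$, then its associated flag $F_{v, \bullet}$ is adapted to $L$; moreover, for each level of the flag, the restriction of $v$ to the corresponding $L_i$'s takes a single common value.

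First, I would apply Lemma \ref{lem-B3-bldg} to the frames $L$ and $L'$ to obtain a permutation $\pi$ of $\{1, \ldots, r\}$ such that the induced simplicial isomorphism between the apartments $A(L)$ and $A(L')$ fixes every flag adapted to both $L$ and $L'$. I would then show that this same $\pi$ witnesses the conclusion of Lemma \ref{lem-B3-val}. So let $v$ be any valuation adapted to both $L$ and $L'$, and write $v = v_{L, a}$ with $a = (v(L_1), \ldots, v(L_r))$ and $v = v_{L', b}$ with $b = (v(L'_1), \ldots, v(L'_r))$. It suffices to verify that $b = \pi^{-1}(a)$, i.e.\ that $v(L'_j) = v(L_{\pi^{-1}(j)})$ for every $j$.

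For this, consider the flag $F_{v, \bullet} = (F_1 \subsetneqq \cdots \subsetneqq F_k = E)$ associated to $v$, with values $c_1 > \cdots > c_k$. Since $v$ is adapted to both $L$ and $L'$, each $F_i$ is spanned by a subset $L_{J_i} \subset L$ and by a subset $L'_{J'_i} \subset L'$, giving two ordered partitions $(J_1, \ldots, J_k)$ and $(J'_1, \ldots, J'_k)$ of $\{1, \ldots, r\}$ corresponding to the same flag under the two frames. By Lemma \ref{lem-B3-bldg}, the permutation $\pi$ fixes this flag, which by construction of the simplicial isomorphism between $A(L)$ and $A(L')$ means precisely that $\pi(J_i) = J'_i$ for every $i$.

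Since $v$ is constant equal to $c_i$ on $L_j$ for $j \in J_i$, and equal to $c_i$ on $L'_j$ for $j \in J'_i$, we obtain $v(L_j) = v(L'_{\pi(j)}) = c_i$ whenever $j \in J_i$. Substituting $m = \pi(j)$ yields $v(L'_m) = v(L_{\pi^{-1}(m)})$, i.e.\ $b_m = a_{\pi^{-1}(m)} = (\pi^{-1}(a))_m$, so $v_{L',b} = v_{L', \pi^{-1}(a)}$ as required. The main (and only) obstacle is purely notational: matching the index conventions used in the identification $v_{L, a} \mapsto v_{L', \pi^{-1}(a)}$ with the action of $\pi$ on ordered partitions inherited from Lemma \ref{lem-B3-bldg}; once those are aligned, the lemma is an immediate consequence.
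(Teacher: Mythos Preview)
Your proposal is correct and follows exactly the approach the paper takes: the paper simply states that Lemma \ref{lem-B3-bldg} immediately implies Lemma \ref{lem-B3-val}, without spelling out any details. Your argument is a fully fleshed-out version of that one-line deduction, carefully tracking how the permutation $\pi$ from Lemma \ref{lem-B3-bldg} acts on the ordered partitions attached to the flag $F_{v,\bullet}$ and hence on the tuples of values; there is no genuine difference in strategy.
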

}

\section{Toric vector bundles as piecewise linear maps}
\label{sec-tvbs-plm}
In this section we interpret the Klyachko data of compatible filtrations on $E$ as a piecewise linear map $\Phi$ to the space of valuations $\tilde{\B}(E)$. Moreover, we interpret the criteria in \cite{HMP} and \cite{DJS} for ampleness and global generation of a toric vector bundle $\E$ as convexity conditions on its corresponding piecewise linear map $\Phi = \Phi_\E$. 

We emphasize that the concept of a piecewise linear map $\Phi$ is not quite new and is basically the data of interpolation filtrations introduced in \cite{Payne-cover}. It is also a special case of a more general construction in \cite{KM-tpb} for classifying toric principal bundles.  
\subsection{Toric vector bundles as piecewise linear maps to space of valuations}   \label{subsec-PL-maps}
We recall from Proposition \ref{prop-preval-filtration} that the set of $\Z$-filtrations on $E$ can be identified with the set of integral valuations on $E$. This gives a convenient way to package the Klyachko data (of compatible filtrations) of a toric vector bundle as a piecewise linear map into the space of valuations. 

\begin{definition}[Piecewise linear map to space of valuations]   \label{def-plm}
With notation as before, a map $\Phi: |\Sigma| \to \tilde{\B}(E)$ is a \emph{piecewise linear map} if the following hold: For any $\sigma \in \Sigma$, there is a frame $L$ for $E$ such that $\Phi(\sigma)$ lands in an (extended) apartment $\tilde{A}(L)$. Moreover, we require that the restriction $\Phi_{|\sigma}: \sigma \to \tilde{A}(L)$ to be linear, i.e. it is the restriction of a linear map from $N_\R$ to $\tilde{A}(L)$. We say that a piecewise linear map $\Phi$ is \emph{integral} if $\Phi$ sends lattice points to lattice points, i.e. $\Phi(N \cap |\Sigma|) \subset \tilde{\B}_\Z(E)$.
\end{definition}

The space of piecewise linear maps on $\Sigma$ can be turned into a category:
\begin{definition}[Morphism of piecewise linear maps]   \label{def-morphism-plm}
Let $F:E \to E'$ be a linear map between finite dimensional $\k$-vector spaces $E$ and $E'$. Let $\Phi$ and $\Phi'$ be piecewise linear maps from $|\Sigma|$ to $\tilde{\B}(E')$ and $\tilde{\B}(E')$ respectively. We say that $F$ gives a morphism $\Phi \to \Phi'$ if for any $x \in |\Sigma|$ we have: $$F^*(\Phi'(x)) \geq \Phi(x).$$
Recall that $F^*$ is the pull-back of a valuation by the linear map $F$ (Definition \ref{def-pull-back-valuation}) and $\geq$ is the partial order on the space of functions on $E$ defined as follows: for $w_1$, $w_2: E \to \overline{\R}$, we say $w_1 \geq w_2$ if $w_1(e) \geq w_2(e)$ for all $e \in E$ (cf. Definition \ref{def-po-valuation}). 
\end{definition}

It is straightforward to see that the space of piecewise linear maps on $\Sigma$ together with the above notion of morphism forms a category. Moreover, the integral piecewise linear maps on $\Sigma$ form of a subcategory. One observes that the subcategory of integral piecewise linear maps on $\Sigma$ is the same as the category of compatible $\Z$-filtrations with respect to $\Sigma$.

The Klyachko classification of toric vector bundles (Theorem \ref{th-Klyachko}) can be restated as follows:
\begin{theorem}[Classification of toric vector bundles in terms of piecewise linear maps]   \label{th-Klyachko-plm}
The category of toric vector bundles on $X_\Sigma$ is naturally equivalent to the category of integral piecewise linear maps to $\tilde{\B}(E)$, for all finite dimensional $k$-vector spaces $E$. 
\end{theorem}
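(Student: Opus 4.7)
The plan is to reduce the statement to Klyachko's theorem (Theorem \ref{th-Klyachko}) by establishing a natural equivalence between the category of integral piecewise linear maps $\Phi: |\Sigma| \to \tilde{\B}(E)$ and the category of compatible $\Z$-filtrations on finite dimensional $\k$-vector spaces. Since the latter is, by Theorem \ref{th-Klyachko}, equivalent to the category of toric vector bundles on $X_\Sigma$, transitivity gives the desired equivalence.

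For the passage from piecewise linear maps to filtrations, given an integral $\Phi$, I would set $v_\rho := \Phi(\v_\rho) \in \tilde{\B}_\Z(E)$ for each ray $\rho \in \Sigma(1)$ and let $E^\rho_\bullet$ be the associated decreasing $\Z$-filtration via Proposition \ref{prop-preval-filtration}. The key point is that the collection $\{E^\rho_\bullet\}_{\rho \in \Sigma(1)}$ satisfies the Klyachko compatibility \eqref{equ-Klyachko-comp-condition}. Fix $\sigma \in \Sigma$; by the definition of a piecewise linear map (Definition \ref{def-plm}) there is a frame $L = \{L_1, \ldots, L_r\}$ with $\Phi(\sigma) \subset \tilde{A}(L)$, and $\Phi|_\sigma$ is the restriction of a linear map $\sigma \to \tilde{A}(L) \cong \R^r$. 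Integrality implies the $r$ coordinate functions are integer-valued linear functionals on $\Span(\sigma) \cap N$, hence correspond to classes $[u_1], \ldots, [u_r] \in M_\sigma$. Unwinding the identification $v_{L, a} \leftrightarrow a$ in \eqref{equ-v-L-a}, one gets for any $\rho \in \sigma(1)$ that $E^\rho_j = \sum_{\langle u_i, \v_\rho \rangle \geq j} L_i$, which is precisely \eqref{equ-Klyachko-comp-condition} with $u(\sigma) = \{[u_1], \ldots, [u_r]\}$.

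Conversely, given compatible filtrations $\{E^\rho_\bullet\}$, I would build $\Phi$ cone by cone. For each $\sigma \in \Sigma$ choose a frame $L^\sigma$ and classes $[u^\sigma_1], \ldots, [u^\sigma_r] \in M_\sigma$ realizing \eqref{equ-Klyachko-comp-condition}, and declare
\[
\Phi_\sigma(x) := v_{L^\sigma, (\langle u^\sigma_1, x\rangle, \ldots, \langle u^\sigma_r, x \rangle)} \qquad (x \in \sigma),
\]
which is linear on $\sigma$ with image in $\tilde{A}(L^\sigma)$ by construction. The two things to verify are (a) independence of the auxiliary choice $(L^\sigma, [u^\sigma_i])$, and (b) agreement on the intersection of adjacent cones. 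For (a), if $L'$ is another frame to which $E^\rho_\bullet$ is adapted for every $\rho \in \sigma(1)$, then each $\Phi_\sigma(\v_\rho)$ is adapted to both $L^\sigma$ and $L'$, and Lemma \ref{lem-B3-val} guarantees that the canonical identification of $\tilde{A}(L^\sigma)$ and $\tilde{A}(L')$ fixes such valuations, so the two constructions produce the same map $\sigma \to \tilde{\B}(E)$. For (b), if $\tau \prec \sigma$ is a face, the restrictions $\Phi_\sigma|_\tau$ and $\Phi_\tau$ agree on each primitive generator of a ray of $\tau$ by construction, and both are linear on $\tau$, so they coincide.

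For the morphism correspondence, unpacking Definition \ref{def-morphism-plm} shows that $F: E \to E'$ gives a morphism $\Phi \to \Phi'$ iff for every $x \in |\Sigma|$ and every $e \in E$, $\Phi'(x)(F(e)) \geq \Phi(x)(e)$. Evaluating at $x = \v_\rho$ and using Proposition \ref{prop-preval-filtration}, this is equivalent to $F(E^\rho_j) \subseteq (E')^\rho_j$ for all $\rho \in \Sigma(1)$ and $j \in \Z$, which is exactly the definition of a morphism of compatible filtrations. Functoriality is routine.

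The main obstacle is step (a) above, namely the well-definedness of $\Phi$ against the choice of frame $L^\sigma$ on each cone. In general a compatible splitting of $E$ for the filtrations along the rays of $\sigma$ is not unique, and one must check that different admissible frames yield the same valuation-valued function on $\sigma$. This is the content to which Lemma \ref{lem-B3-val} (the $\GL(E)$-avatar of axiom (B3) for Tits buildings) is tailored, and invoking it is the crucial step that lets the construction descend to a well-defined piecewise linear map into $\tilde{\B}(E)$ rather than into an apartment depending on choices.
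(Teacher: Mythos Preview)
Your overall plan coincides with the paper's: reduce to Klyachko's Theorem~\ref{th-Klyachko} by identifying integral piecewise linear maps with compatible $\Z$-filtrations. The paper in fact records this identification as a one-sentence observation immediately preceding the theorem and gives no further argument, so you are supplying details the paper omits entirely.

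There is, however, a gap in your step~(a). Lemma~\ref{lem-B3-val} only guarantees that the identification $\iota:\tilde{A}(L^\sigma)\to\tilde{A}(L')$ fixes those valuations adapted to \emph{both} frames. From linearity and agreement at the ray generators you obtain $\iota\circ\Phi_\sigma=\Phi'_\sigma$ as linear maps into $\R^r$, but to conclude that $\Phi_\sigma(x)$ and $\Phi'_\sigma(x)$ coincide as elements of $\tilde{\B}(E)$ for a general $x\in\sigma$ you still need $\Phi_\sigma(x)\in\tilde{A}(L^\sigma)\cap\tilde{A}(L')$, i.e., that the flag of $\Phi_\sigma(x)$ is adapted to $L'$ as well. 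You have only placed the ray values $\Phi_\sigma(\v_\rho)$ in this intersection, and $\iota$ is certainly not the identity on all of $\tilde{A}(L^\sigma)$ when $L^\sigma\neq L'$. One way to close the gap is to invoke convexity of apartment intersections in the building (the intersection $\tilde{A}(L^\sigma)\cap\tilde{A}(L')$ is a cone in $\tilde{A}(L^\sigma)\cong\R^r$, so it contains the cone spanned by the $\Phi_\sigma(\v_\rho)$); alternatively, bypass frames and verify directly that the filtration associated to $\Phi_\sigma(x)$ is determined by the $E^\rho_\bullet$ alone, as in Payne's interpolation filtrations cited in the paper.
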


\begin{example}[Tangent bundle of $\mathbb{P}^n$]  \label{ex-tangent-bundle-Pn} Let us give the Klaychko filtrations for the tangent bundle of a projective space and then interpret this data as a piecewise linear map. Consider the projective space $\mathbb{P}^n$ and let $\E = T\mathbb{P}^n$ be its tangent bundle. Identify the lattices $M$ and $N$ with $\Z^n$ and let $\Sigma \in \R^n$ be the fan of $\mathbb{P}^n$. The primitive vectors on the rays in $\Sigma$ are the vectors 
$\{\v_1, \ldots, \v_{n+1}\}$ where $\v_i$ is the $i$-th standard basis vector and $\v_{n+1} = - \v_1 - \cdots - \v_n$. We write 
$\rho_i$ for the ray generated by $\v_i$. One identifies the fibre $E$ of $T\mathbb{P}^n$ over the identity of the torus $T$ with $N \otimes_\Z \k = \k^n$. Hence, the vectors $\v_1, \v_2,\ldots, \v_n$ also form the standard basis for $E = \k^n$. One computes that the Klyachko filtrations are given as follows:
$$E_j^{\rho_i} =
\begin{cases}
E & j \leq 0 \cr 
\text{span}(\v_i) & j=1 \cr
0 & j > 0
\end{cases}
$$
The $n$-dimensional cones in the fan are $\{\sigma_1, \ldots, \sigma_n\}$ where $\sigma_i$ is the cone spanned by $\{\v_1, \ldots, \hat{\v}_i, \ldots, \v_{n+1}\}$, $1 \leq i \leq n+1$ (here $\hat{\v_i}$ means this vector is removed). For each $\sigma_i$ we have a basis of $E$ and a multiset of characters $u(\sigma_i)$ appearing in Klyachko's compatibility condition. Let $\{\w_1, \ldots, \w_n\}$ denote the standard basis elements in $M \cong \Z^n$. One computes that for any $\sigma_i$ the corresponding basis is $\{\v_1, \ldots, \hat{\v}_i, \ldots, \v_{n+1}\}$. Moreover, for $1 \leq i \leq n$, $u_{\sigma_i}
= \{ \w_1-\w_i, \w_2 - \w_i, \ldots, \w_{i-1} - \w_i, -\w_i, \w_{i+1} - \w_i, \ldots, \w_n - \w_i \}$ and $u_{\sigma_{n+1}} = \{\w_1, \ldots, \w_n\}$. 
The peicewise linear map $\Phi = \Phi_{T\mathbb{P}^n}: |\Sigma| \to \B(\GL(E))$ is then given as follows. Let $x \in N_\Q$ lie in the maximal cone $\sigma_i$. Let $0 \neq e \in E$ and let us write it in the basis corresponding to $\sigma_i$ as $e = \sum_j e_j$. Then: 
$$\Phi(x)(e) = \min\{ \langle x, u_{\sigma_i, j} \rangle \mid e_j \neq 0 \}.$$
\end{example}

To illustrate the usefulness of packaging the Klyachko data as a piecewise linear map into an (extended) Tits building, we state a result on equivariant splitting of toric vector bundles. This is a restatement of a result of Klyachko (\cite[Corollary 2.2.3]{Klyachko}) in our language. A toric vector bundle is said to \emph{split equivariantly} if it is equivariantly isomorphic to a direct sum of toric line bundles. 

\begin{proposition}[Criterion for equivariant splitting]  \label{prop-equiv-split}
Let $\E$ be a toric vector bundle with the corresponding piecewise linear map $\Phi_\E: |\Sigma| \to \tilde{\B}(E)$. Then $\E$ splits equivariantly if and only if the image of $\Phi_\E$ lands in a single (extended) apartment $\tilde{A}(L)$. 
\end{proposition}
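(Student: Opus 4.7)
\smallskip
\noindent\textbf{Proof proposal.}

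The plan is to use the classification Theorem \ref{th-Klyachko-plm}: two toric vector bundles are equivariantly isomorphic if and only if their piecewise linear maps to $\tilde{\B}(E)$ coincide (after identifying the fibers over $x_0$). So I would reduce each direction to checking a statement about piecewise linear maps into $\tilde{\B}(E)$.

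For the forward implication, assume $\E = \bigoplus_{i=1}^r \L_{\phi_i}$ equivariantly, where each $\L_{\phi_i}$ is a toric line bundle corresponding to an integral piecewise linear function $\phi_i \in \PL(\Sigma, \Z)$. Let $L_i = (\L_{\phi_i})_{x_0}$; then $L = \{L_1, \ldots, L_r\}$ is a frame for $E$. A toric line bundle $\L_\phi$ has the trivial piecewise linear map $\Phi_{\L_\phi}(x)(e) = \phi(x)$ for any $0 \neq e \in L_i$. Taking direct sums corresponds to taking the minimum of the factor valuations, so for $e = \sum_i e_i$ with $e_i \in L_i$,
$$\Phi_\E(x)(e) = \min\{\phi_i(x) \mid e_i \neq 0\} = v_{L, (\phi_1(x), \ldots, \phi_r(x))}(e).$$
In particular $\Phi_\E(x) \in \tilde{A}(L)$ for every $x \in |\Sigma|$.

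For the converse, suppose the image of $\Phi_\E$ lies in a single extended apartment $\tilde{A}(L)$ for some frame $L = \{L_1, \ldots, L_r\}$. Using the identification $\tilde{A}(L) \cong \R^r$ via $v_{L, a} \leftrightarrow a$ from \eqref{equ-v-L-a}, we obtain coordinate functions $\phi_1, \ldots, \phi_r : |\Sigma| \to \R$ by writing $\Phi_\E(x) = v_{L, (\phi_1(x), \ldots, \phi_r(x))}$. I would then verify that each $\phi_i$ is integral piecewise linear: the definition of piecewise linear map forces $\Phi_\E$ to be linear on each cone $\sigma$ when composed with the identification $\tilde{A}(L) \cong \R^r$, so each $\phi_i$ is linear on $\sigma$; integrality of $\Phi_\E$ together with $\tilde{A}_\Z(L) \cong \Z^r$ (Definition \ref{def-space-of-val}) yields $\phi_i(N \cap |\Sigma|) \subset \Z$. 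Hence each $\phi_i$ determines a toric line bundle $\L_{\phi_i}$ on $X_\Sigma$, and by the forward direction the bundle $\E' = \bigoplus_{i=1}^r \L_{\phi_i}$ has piecewise linear map $\Phi_{\E'} = \Phi_\E$. By Theorem \ref{th-Klyachko-plm}, $\E \cong \E'$ equivariantly, proving that $\E$ splits.

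The substantive point, and the main place where one must be careful, is extracting well-defined piecewise linear functions $\phi_i$ from $\Phi_\E$ in the converse: a priori, on two different maximal cones $\sigma, \sigma'$ the map $\Phi_\E$ could land in $\tilde{A}(L)$ via different labelings of the frame vectors $L_1, \ldots, L_r$, and one would then only get the coordinates well-defined up to a permutation on each cone (a phenomenon already visible in Lemma \ref{lem-B3-val}). The hypothesis is that $\Phi_\E$ lands in a single \emph{extended apartment}, i.e.\ the identification of coordinates is fixed globally, so this ambiguity does not arise and the $\phi_i$ patch into honest elements of $\PL(\Sigma, \Z)$; all remaining checks are routine.
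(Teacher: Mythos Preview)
The paper does not actually supply a proof of this proposition; it presents the statement as a restatement of \cite[Corollary 2.2.3]{Klyachko} in the language of piecewise linear maps and moves directly to the corollaries. Your argument, by contrast, gives a self-contained proof via Theorem \ref{th-Klyachko-plm}, and it is correct.

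One step deserves to be made more explicit. In the converse direction you assert that ``the definition of piecewise linear map forces $\Phi_\E$ to be linear on each cone $\sigma$ when composed with the identification $\tilde{A}(L) \cong \R^r$.'' But Definition \ref{def-plm} only guarantees linearity of $\Phi_{|\sigma}$ into \emph{some} apartment $\tilde{A}(L_\sigma)$, not into the global $\tilde{A}(L)$; a priori $\phi_i(x)=\Phi_\E(x)(e_i)$ is a minimum of linear functions on $\sigma$ and hence only piecewise linear with respect to a refinement of $\Sigma$ (cf.\ the paragraph after \eqref{equ-h-Phi}). What saves you is precisely Lemma \ref{lem-B3-val}: since $\Phi_\E(\sigma)\subset \tilde{A}(L)\cap\tilde{A}(L_\sigma)$, the permutation $\pi$ there gives $\phi_i(x)=\Phi_\E(x)(L_i)=\Phi_\E(x)((L_\sigma)_{\pi(i)})=\langle x,u_{\sigma,\pi(i)}\rangle$, which is genuinely linear on $\sigma$. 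You do invoke Lemma \ref{lem-B3-val} in your final paragraph, but for a different issue (gluing labelings across cones); since your $\phi_i$ are defined globally as $\Phi_\E(\,\cdot\,)(e_i)$ for fixed $e_i$, that gluing concern does not actually arise, whereas the linearity on each cone---needed so that the $\L_{\phi_i}$ live on $X_\Sigma$ rather than on a blowup---is where the lemma is really used.
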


The following is an immediate corollary of Proposition \ref{prop-equiv-split} and the axioms of a building (Definition \ref{def-bldg}(B2)). It can be found in \cite[Section 2.3, Example 3]{Klyachko}, as well as \cite[Corollary 5.5]{HMP}. 
\begin{corollary}[Equivariant splitting on $\mathbb{P}^1$] \label{cor-equiv-split-P1}
Any toric vector bundle on $\mathbb{P}^1$ is equivariantly split. 
\end{corollary}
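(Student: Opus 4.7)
The plan is to invoke Proposition \ref{prop-equiv-split}: the task reduces to showing that, for any toric vector bundle $\E$ on $\mathbb{P}^1$, the associated piecewise linear map $\Phi_\E : |\Sigma| \to \tilde{\B}(E)$ has image contained in a single extended apartment $\tilde{A}(L)$. The fan $\Sigma$ of $\mathbb{P}^1$ in $N_\R \cong \R$ has exactly two rays $\rho_1, \rho_2$ with primitive vectors $\v_1$ and $\v_2 = -\v_1$, so that $|\Sigma| = \rho_1 \cup \rho_2$.

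First I would apply Definition \ref{def-plm} to obtain frames $L_1, L_2$ such that $\Phi_\E$ sends $\rho_i$ linearly into the extended apartment $\tilde{A}(L_i)$. Since $\rho_i = \R_{\geq 0} \cdot \v_i$ and $\Phi_\E|_{\rho_i}$ is the restriction of an $\R$-linear map $N_\R \to \tilde{A}(L_i) \cong \R^r$, the image $\Phi_\E(\rho_i)$ is the ray $\{ t v_i \mid t \geq 0 \}$, where $v_i := \Phi_\E(\v_i)$.

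The key step is then to apply Lemma \ref{lem-bldg-B2} (axiom (B2) for $\Delta(E)$) to the two flags $F_{v_1,\bullet}$ and $F_{v_2,\bullet}$ attached to $v_1$ and $v_2$: this yields a single frame $L$ to which both flags, and therefore both valuations, are adapted. I expect the only subtlety to be checking that not just $v_1$ and $v_2$ themselves but the full rays $\{ t v_i : t \geq 0 \}$ lie in $\tilde{A}(L)$. For $t > 0$ the valuation $t v_i$ has the same family of level subspaces $\{E_{v_i \geq a}\}_a$ as $v_i$ (only the indexing of the jumps is rescaled), so $t v_i$ is adapted to $L$ precisely when $v_i$ is; for $t = 0$ the zero valuation is trivially adapted to every frame. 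Consequently $\Phi_\E(|\Sigma|) = \Phi_\E(\rho_1) \cup \Phi_\E(\rho_2) \subset \tilde{A}(L)$, and Proposition \ref{prop-equiv-split} delivers the equivariant splitting of $\E$.
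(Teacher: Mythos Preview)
Your argument is correct and follows essentially the same route as the paper's proof: both reduce to Proposition \ref{prop-equiv-split}, observe that the fan of $\mathbb{P}^1$ has only two rays, and invoke the building axiom (B2) (equivalently Lemma \ref{lem-bldg-B2}) to place $\Phi(\v_1)$ and $\Phi(\v_2)$ in a common extended apartment. Your extra remark that the full rays $\{t v_i : t \geq 0\}$, not just the valuations $v_i$, lie in $\tilde{A}(L)$ is a harmless elaboration the paper leaves implicit.
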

\begin{proof}
The fan $\Sigma$ of $\mathbb{P}^1$ consists of two rays $\R_{\geq 0}$ and $\R_{\leq 0}$ with primitive vectors $1$ and $-1$ respectively. 
Let $\E$ be a toric vector bundle on $\mathbb{P}^1$ and let $\Phi: |\Sigma| = \R \to \tilde{\B}(E)$ be the piecewise linear map corresponding to $\E$. 
By the axioms B(2) in the definition of a building (Definition \ref{def-bldg}) there is an (extended) apartment $\tilde{A}(L)$ that contains both $\Phi(1)$ and $\Phi(-1)$. The corollary now follows from Proposition \ref{prop-equiv-split}.
\end{proof}

Finally, the following is a slightly refined version of Corollary \ref{cor-equiv-split-P1}. It is from \cite[Corollary 5.10]{HMP}. Let $\sigma, \sigma' \in \Sigma$ be full dimensional cones in a fan $\Sigma$ such that their intersection $\tau = \sigma \cap \sigma'$ is a comdimension $1$ face. Let $C$ be a $T$-invariant curve in $X_\Sigma$ corresponding to the cone $\tau$. Let $u, u' \in M$ be such that $u-u'$ is orthogonal to $\tau$. Let $\L_u$, $\L_{u'}$ be trivial line bundles on affine toric charts $U_\sigma$, $U_{\sigma'}$ and equipped with $T$-linearizations via characters $\chi^u$, $\chi^{u'}$ respectively. Then one can construct a line bundle $\L_{u, u'}$ on $U_\sigma \cup U_{\sigma'}$ by gluing the line bundles $\L_u$ and $\L_{u'}$ via the transition function $\chi^{u-u'}$ which is regular and invertible on $U_\tau$. Let $\v_\tau \in \sigma$ be the vector that is dual to the primitive generators of $\tau^\perp$. One shows that the line bundle ${\L_{u,u'}}_{|C}$ is isomorphic to the line bundle $\mathcal{O}_{\mathbb{P}^1}(\langle u, \v_\tau \rangle D_1 - \langle  u', \v_\tau \rangle D_2)$ on $\mathbb{P}^1$, where $D_1, D_2$ are the $T$-fixed points in $\mathbb{P}^1$ (under the isomorphism $C \cong \mathbb{P}^1$, $x_\sigma \mapsto D_1$ and $x_{\sigma'} \mapsto D_2$). 

\begin{corollary} \label{cor-equiv-split-curve}
Let $\E$ be a toric vector bundle over $X_\Sigma$. Let $u(\sigma) = \{u_1, \ldots, u_r\}$ and $u(\sigma') = \{u'_1, \ldots, u'_r\}$ be the multisets of characters corresponding to $\E_{|U_\sigma}$ and $\E_{|U_\sigma'}$ respectively. Then there exists a permutation $\pi: \{1, \ldots, r\} \to \{1, \ldots, r\}$ such that the $T$-equivariant vector bundle $\E_{|C}$ on the $T$-equivariant curve $C \cong \mathbb{P}^1$ splits $T$-equivariantly as a sum of line bundles $\L_{u_1, u'_{\pi(1)}} \oplus \cdots \oplus \L_{u_r, u'_{\pi(r)}}$. Moreover, the collection of pairs $\{(u_1, u'_{\pi(1)}), \ldots, (u_r, u'_{\pi(r)}) \}$ is uniquely determined. 
\end{corollary}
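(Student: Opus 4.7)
The plan is to use the piecewise linear map $\Phi_\E: |\Sigma| \to \tilde{\B}(E)$ associated to $\E$ by Theorem \ref{th-Klyachko-plm}, together with Lemma \ref{lem-B3-val} on the identification of overlapping extended apartments, to match up the characters $u(\sigma)$ and $u(\sigma')$ across the codimension-one face $\tau = \sigma \cap \sigma'$.

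First I would unpack the local data on the two affine charts. By Proposition \ref{prop-toric-vb-over-affine-equiv-trivial} there is an equivariant splitting $\E_{|U_\sigma} \cong \bigoplus_i \L_{[u_i]}$, which yields a frame $L = \{L_1, \ldots, L_r\}$ of $E = \E_{x_0}$, with $L_i$ the fiber at $x_0$ of the $i$-th summand. Similarly $\E_{|U_{\sigma'}}$ yields a frame $L' = \{L'_1, \ldots, L'_r\}$. Restricting the two splittings to $x_0$ shows that $\Phi_\E$ sends $\sigma$ into $\tilde{A}(L)$ and $\sigma'$ into $\tilde{A}(L')$, with $\Phi_\E(x)(L_i) = \langle x, u_i \rangle$ on $\sigma$ and $\Phi_\E(x)(L'_j) = \langle x, u'_j \rangle$ on $\sigma'$. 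The key geometric input is that for every $x \in \tau$, the valuation $\Phi_\E(x)$ lies in $\tilde{A}(L) \cap \tilde{A}(L')$, i.e.\ it is adapted to both frames.

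Next I would apply Lemma \ref{lem-B3-val}: there is a permutation $\pi$ of $\{1, \ldots, r\}$ such that the induced identification $\tilde{A}(L) \cong \tilde{A}(L')$ fixes every doubly-adapted valuation. Evaluating this identification on $\Phi_\E(x)$ for $x \in \tau$ gives
$$\langle x, u_i \rangle = \Phi_\E(x)(L_i) = \Phi_\E(x)(L'_{\pi(i)}) = \langle x, u'_{\pi(i)} \rangle, \qquad x \in \tau,$$
so $u_i - u'_{\pi(i)} \in \tau^\perp \cap M$ for every $i$. This is precisely the condition required in the paragraph preceding the corollary to form the $T$-equivariant line bundle $\L_{u_i, u'_{\pi(i)}}$ on $U_\sigma \cup U_{\sigma'}$ by gluing $\L_{u_i}$ and $\L_{u'_{\pi(i)}}$ along the transition function $\chi^{u_i - u'_{\pi(i)}}$, which is regular and invertible on $U_\tau$. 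To assemble the global decomposition, I would observe that the two equivariant splittings of $\E$ on $U_\sigma$ and $U_{\sigma'}$ induce the frames $L$ and $L'$ on the common fiber $E$, and the permutation $\pi$ is exactly the matching of line summands that is consistent with the gluing of $\E$ on $U_\tau$. Thus $\E_{|U_\sigma \cup U_{\sigma'}} \cong \bigoplus_i \L_{u_i, u'_{\pi(i)}}$ $T$-equivariantly, and restricting to $C$ gives the desired splitting.

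For uniqueness of the multiset of pairs, I would restrict $\E_{|C}$ to the two $T$-fixed points $x_\sigma, x_{\sigma'} \in C$: the resulting fibers carry $T$-actions with character multisets equal to $u(\sigma)$ and $u(\sigma')$, and in any $T$-equivariant splitting of a vector bundle on $\mathbb{P}^1$ into toric line bundles, each summand is classified by its pair of weights at the two fixed points. Hence the collection $\{(u_i, u'_{\pi(i)})\}$ is forced by the $T$-equivariant isomorphism class of $\E_{|C}$. The main obstacle is ensuring that the permutation produced by the purely building-theoretic Lemma \ref{lem-B3-val} matches the geometric gluing of the line-bundle summands; this is overcome by the observation that the full-dimensional overlap $\tau$ forces $\Phi_\E(x)$ to be simultaneously adapted to $L$ and $L'$, which pins down $\pi$ enough to recover the line-by-line gluing datum and not merely its existence.
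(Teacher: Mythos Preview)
Your argument contains a genuine gap at the step where you assert $\E_{|U_\sigma \cup U_{\sigma'}} \cong \bigoplus_i \L_{u_i, u'_{\pi(i)}}$. This is false in general: by Proposition~\ref{prop-equiv-split} such a splitting would require the image $\Phi_\E(\sigma \cup \sigma')$ to lie in a single extended apartment, but $\sigma \cup \sigma'$ contains all the rays of both cones, and there is typically no frame adapted to all of the corresponding Klyachko filtrations simultaneously. For a concrete counterexample take $\E = T\mathbb{P}^2$ (Example~\ref{ex-tangent-bundle-Pn}): the three rays contribute three distinct lines $\Span(\v_1), \Span(\v_2), \Span(\v_3)$ in the $2$-dimensional space $E$, and no frame contains all three, so $T\mathbb{P}^2$ does not split equivariantly on any $U_{\sigma_i} \cup U_{\sigma_j}$. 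The transition cocycle of $\E$ on $U_\tau$, written in the bases coming from $L$ and $L'$, is in general \emph{not} a monomial (permutation-times-diagonal) matrix, so the sentence ``the permutation $\pi$ is exactly the matching of line summands that is consistent with the gluing of $\E$ on $U_\tau$'' is an assertion, not an argument.

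The paper does not supply its own proof here but cites \cite[Corollary~5.10]{HMP}; the intended route goes through Corollary~\ref{cor-equiv-split-P1} rather than Lemma~\ref{lem-B3-val}. One first restricts to $C \cong \mathbb{P}^1$ and applies Corollary~\ref{cor-equiv-split-P1} (axiom~B2, not B3) to the fiber $\E_{x_\tau}$ at a point of the open orbit in $C$: this produces an equivariant splitting $\E_{|C} \cong \bigoplus_k \mathcal{M}_k$ into toric line bundles on $\mathbb{P}^1$. Each summand $\mathcal{M}_k$ is then identified by its $T$-weights at the two fixed points $x_\sigma, x_{\sigma'}$; since the fiber of $\E$ at $x_\sigma$ has weight multiset $u(\sigma)$ and at $x_{\sigma'}$ has weight multiset $u(\sigma')$, the summands are forced to be of the form $\L_{u_i, u'_{\pi(i)}}|_C$ for some permutation $\pi$, and the multiset of weight pairs is determined by the equivariant isomorphism class of $\E_{|C}$. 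Your use of Lemma~\ref{lem-B3-val} does correctly yield \emph{some} permutation with $u_i - u'_{\pi(i)} \in \tau^\perp$, but this condition alone does not single out the multiset of pairs (when several $u_i$ agree on $\tau$ there are multiple such permutations), and you never establish that your $\pi$ is the one realized by the actual splitting of $\E_{|C}$.
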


\subsection{Positivity criteria in terms of convexity of piecewise linear maps}  
\label{subsec-tvb-positivity}
In this section we look at the positivity notions for a toric vector bundle $\E$ (\cite{HMP} and \cite{DJS}) and interpret them as convexity properties on the corresponding piecewise linear map $\Phi_\E$. More precisely, we define two notions of convexity for piecewise linear maps to $\tilde{\B}(E)$ and show that, extending the case of toric line bundles, these notions correspond to positivity properties (ampleness and global generation) of associated toric vector bundles. 


First let $f: N_\R \to \R$ be a piecewise linear function with respect to a complete fan $\Sigma$. In this case, the convexity of $f$ can be characterized as follows. Let $\sigma, \sigma' \in \Sigma(n)$ be two full dimensional cones that intersect in an $(n-1)$-dimensional cone $\tau$. Let $f_\sigma: N_\R \to \R$ (respectively $f_{\sigma'}: N_\R \to \R$) be the linear function that coincides with $f_{|\sigma}: \sigma \to \R$ (respectively $f_{|\sigma'}: \sigma' \to \R$). Then $f$ is {\it convex} on $\sigma \cup \sigma'$ if for any $x \in \sigma$ we have $f_{\sigma'}(x) \leq f(x)$ and for any $x' \in \sigma'$, $f_\sigma(x') \leq f(x')$. And $f$ is {\it convex} if it is convex on any $\sigma \cup \sigma'$ (see Figure \ref{fig-convex1}). 

We can generalize this description of a convex piecewise linear function to piecewise linear maps to $\tilde{\B}(E)$ as follows. First, we recall that, for a frame $L = \{L_1, \ldots, L_r\}$, $\tilde{A}(L)$ denotes the set of valuations adapted to $L$ (Definition \ref{def-space-of-val}). As before, let $\sigma$, $\sigma' \in \Sigma(n)$ be maximal cones with $\tau = \sigma \cap \sigma' \in \Sigma(n-1)$. We would like to say when $\Phi: |\Sigma| \to \tilde{\B}(E)$ is convex on $\sigma \cup \sigma'$. 
By definition there are frames $L=\{L_1, \ldots, L_r\}$, $L'=\{L'_1, \ldots, L'_r\}$ such that $\Phi_{|\sigma}: \sigma \to \tilde{A}(L)$ and $\Phi_{|\sigma'}: \sigma' \to \tilde{A}(L')$ are given by linear maps. In other words, there are $\{u_1, \ldots, u_r\}$ and $\{u'_1, \ldots, u'_r\}$ such that for any $x \in \sigma$, $x' \in \sigma'$ and $0 \neq e \in E$ we have:
$$\Phi_{|\sigma}(x)(e) = \min\{ \langle x, u_i \rangle \mid e_i \neq 0 \},$$
$$\Phi_{|\sigma'}(x')(e) = \min\{ \langle x', u'_j \rangle \mid e'_j \neq 0 \}.$$
Here  $e = \sum_i e_i$, $e_i \in L_i$, and $e = \sum_j e'_j$, $e'_j \in L'_j$, are decompositions of $e$ according to the frames $L$ and $L'$ respectively. In particular, for $x \in \tau$ and $0 \neq e \in E$ we have:
$$\Phi(x)(e) = \min\{ \langle u_i, x \rangle \mid e_i \neq 0 \} = \min\{ \langle u'_j, x \rangle \mid e'_j \neq 0 \}.$$
Let $\pi: \{1, \ldots, r\} \to \{1, \ldots, r\}$ be the permutaiton in Corollary \ref{cor-equiv-split-curve}. In particular, for any $x \in \tau$ we have $\langle u_i, x \rangle = \langle u'_{\pi(i)}, x \rangle$, for all $i=1, \ldots, r$, i.e. $u_i - u'_{\pi(i)} \in \tau^{\perp}$. We use the permutation $\pi$ to define two linear maps $T: \sigma' \to \tilde{A}(L')$, $T': \sigma \to \tilde{A}(L)$. The linear map $T$ (respectively $T'$) should be thought of as an extension of $\Phi_{|\sigma}$ (respectively $\Phi_{|\sigma'}$) to $\sigma'$ (respectively $\sigma$). For any $x \in \sigma$, $x' \in \sigma'$ and $0 \neq e \in E$ put:
\begin{equation}   \label{equ-T-T'} 
T(x')(e) = \min\{ \langle u_i, x' \rangle \mid e'_{\pi(i)} \neq 0 \}, \quad
T'(x)(e) = \min\{ \langle u'_{\pi(i)}, x \rangle \mid e_i \neq 0 \}.
\end{equation}

\begin{figure}
\includegraphics[height=5cm]{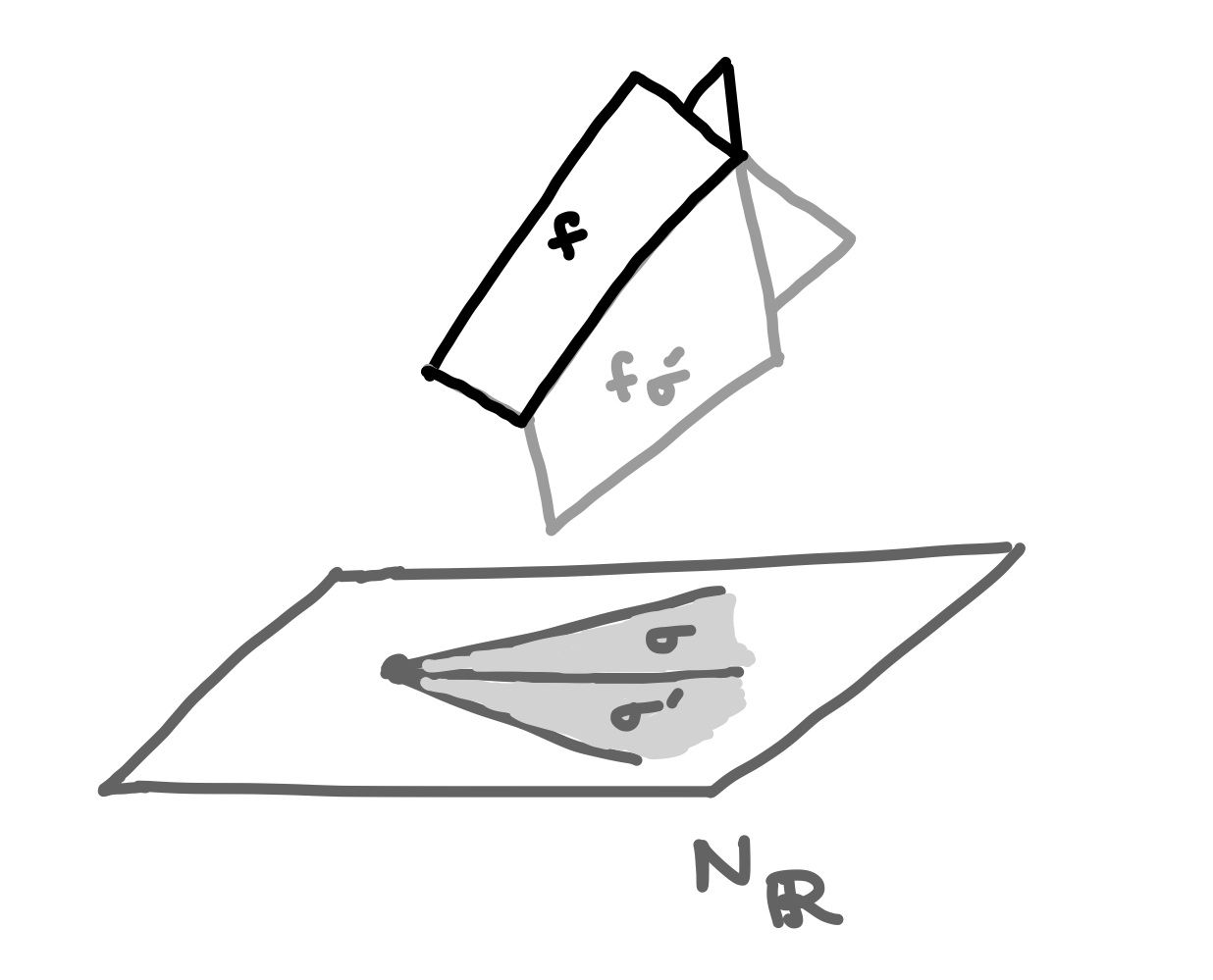}
\caption{Visualization of criterion for convexity of a piecewise linear function} 
\label{fig-convex1}
\end{figure}

\begin{definition}[(Buildingwise) convex map] \label{def-buildingwise-convex}
We say that a piecewise linear map $\Phi: |\Sigma| \to \tilde{\B}(E)$ is {\it (buildingwise) convex} if the following holds for any $\sigma, \sigma' \in \Sigma(n)$ with $\tau=\sigma \cap \sigma' \in \Sigma(n-1)$:
\begin{align}  \label{equ-T-T'-convexity-condition}
T(x') &\leq \Phi(x'), \quad \forall x' \in \sigma', \cr
T'(x) &\leq \Phi(x), \quad \forall x \in \sigma,
\end{align}
where $\leq$ is the partial order on the space of valuations $\tilde{\B}(E)$ as in Definition \ref{def-po-valuation}. 
\end{definition}

\begin{definition}[(Buildingwise) strictly convex map]   \label{def-strictly-convex}
With notation as above, $\Phi: |\Sigma| \to \tilde{\B}(E)$ is a (buildingwise) strictly convex map if the following holds for any $\sigma, \sigma' \in \Sigma(n)$ with $\tau=\sigma \cap \sigma' \in \Sigma(n-1)$:
\begin{align}  \label{equ-T-T'-st-convexity-condition}
T(x') &< \Phi(x'), \quad \forall x' \in \sigma' \setminus \tau,  \cr
T'(x) &< \Phi(x), \quad \forall x \in \sigma \setminus \tau. 
\end{align}
\end{definition}

One verifies that the above definitions are indeed independent of the choice of the bijection $\pi: L \to L'$ in Lemma \ref{lem-B3-val}.
Below we see that the buildingwise convexity (respectively strict buildingwise convexity) of a piecewise linear map is equivalent to the corresponding toric vector bundle being nef (respectively ample). 

\begin{remark}   \label{rem-buildingwise-convex-upper-graph-convex}
We expect that buildingwise convexity of a piecewise linear map $\Phi$ is equivalent to the upper graph of $\Phi$ being a convex subset of $N_\R \times \tilde{\B}(E)$ in a suitable sense. Here we define the upper graph of $\Phi$ as 
$\{(x, v) \mid v \text{ and } \Phi(x) \text{ lie in the same apartment and } v \geq \Phi(x)\} \subset N_\R \times \tilde{\B}(E)$, where as above, $\geq$ denotes the partial order on the space of valuations. 
\end{remark}

There is an alternative way to define convexity of a real-valued piecewise linear function. Let $f: N_\R \to \R$ be a piecewise linear function with respect to a complete fan $\Sigma$ in $N_\R$. For each maximal cone $\sigma \in \Sigma(n)$ let $f_\sigma: N_\R \to \R$ be the linear function that coincides with $f$ on $\sigma$. Then $f$ is {\it convex} if for any maximal cone $\sigma \in \Sigma$ the graph of $f$ lies above that of of $f_\sigma$, that is, for any $x \in N_\R$ we have $f_\sigma(x) \leq f(x)$ (see Figure \ref{fig-convex2}).

Generalizing the above, we define another version of convexity of a piecewise linear map into $\tilde{\B}(E)$. In general, this version of convexity turns out to be different from the previous one (Definition \ref{def-buildingwise-convex}). We see below that this notion of convexity is equivalent to the corresponding toric vector bundle being globally generated. 

With notation as above, let $\Phi: |\Sigma| \to \tilde{\B}(E)$ be a piecewise linear map. For every maximal cone $\sigma \in \Sigma$, let $L_\sigma = \{L_{\sigma, 1}, \ldots, L_{\sigma, r}\}$ and $u_\sigma = \{u_{\sigma, 1}, \ldots, u_{\sigma, r}\} \subset M_\R$ be the corresponding frame and multiset defining the linear map $\Phi_{|\sigma}$. Let $S_\sigma: N_\R \to \tilde{A}(L_\sigma)$ be the linear map that coincides with $\Phi$ on $\sigma$, that is, for every $x \in N_\R$: 
\begin{equation*}
S_\sigma(x)(e) = \min\{ \langle x, u_{\sigma, i} \rangle \mid e_i \neq 0\},
\end{equation*} 
for any $0 \neq e \in E$ with $e = \sum_i e_i$ its decomposition with respect to the frame $L_\sigma$.

\begin{figure}
\includegraphics[height=5cm]{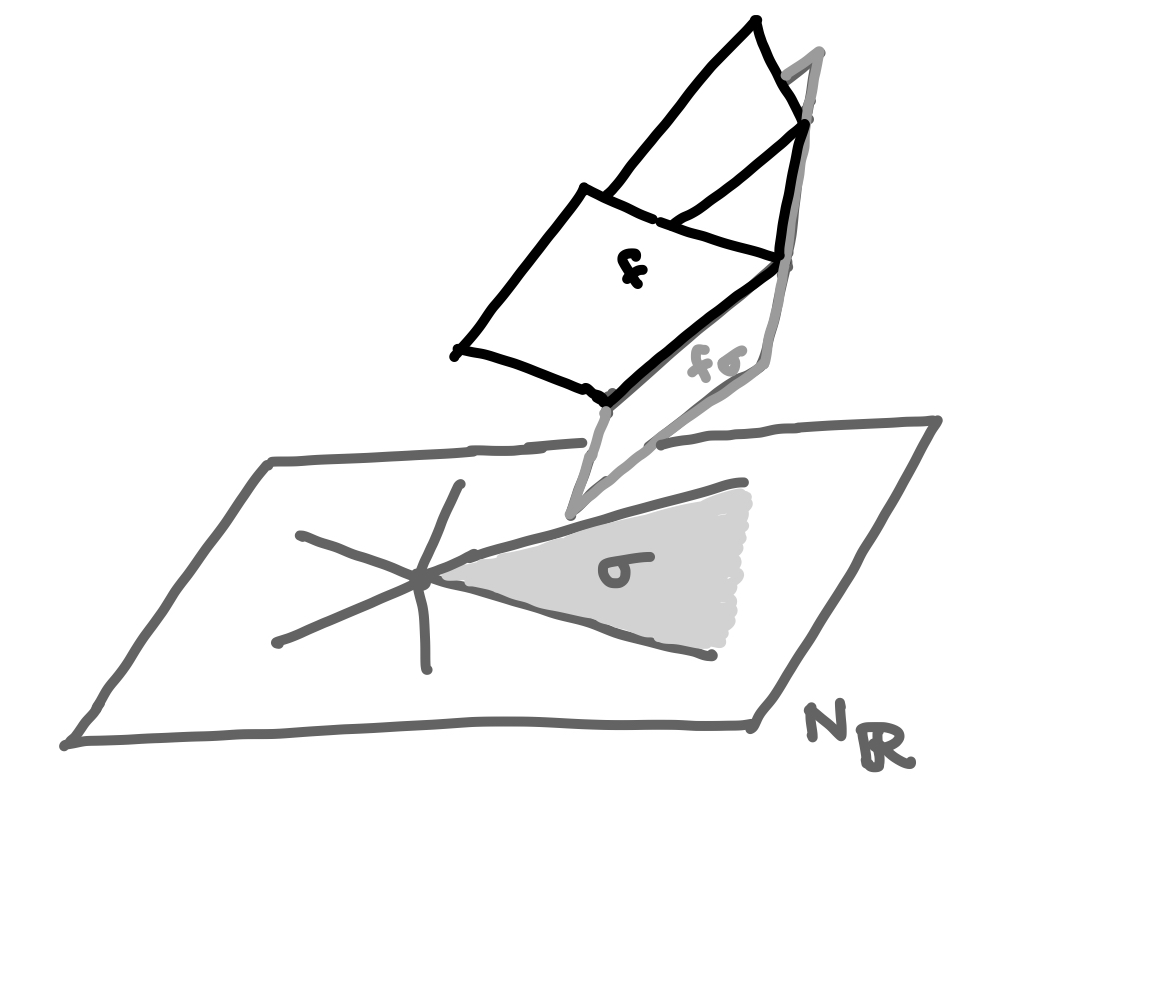}
\caption{Visualization of the second criterion for convexity of a piecewise linear function}
\label{fig-convex2}
\end{figure}

\begin{definition}[(Fanwise) convex map]    \label{def-fanwise-convex}
We say that $\Phi: |\Sigma| \to \tilde{\B}(E)$ is {\it fanwise convex} if the following holds. For every maximal cone $\sigma \in \Sigma(n)$, there exists a compatible frame $L_\sigma$, such that the graph of $\Phi$ lies under the graph of the linear map $S_\sigma$. That is:\begin{equation}  \label{equ-fanwise-convex}
S_\sigma(x) \leq \Phi(x), \quad \forall x \in N_\R,
\end{equation}
where as before, $\geq$ is the partial order on the space of valuations $\tilde{\B}(E)$.
\end{definition}


Note that in \eqref{equ-T-T'-convexity-condition}  we require the inequality to hold for $x \in \sigma$ (or $x \in \sigma'$) while in \eqref{equ-fanwise-convex} we want the similar inequality to hold for all $x \in N_\R$. 


Finally, we relate the notions of ample, nef and globally generated for a toric vector bundle with the notions of convexity of piecewise linear maps discussed above. These generalize the familiar statements for toric line bundles and $\R$-valued piecewise linear functions (\cite[Chapter 6]{CLS}). 

The following gives criteria for nef and ampleness of a toric vector bundle in terms of the (buildingwise) convexity of the corresponding piecewise linear map. It is a corollary of \cite[Theorem 2.1]{HMP}. 
\begin{theorem}   \label{th-ample-strictly-convex}
A toric vector bundle $\mathcal{E}$ over a complete toric variety is nef (respectively ample) if and only if the corresponding piecewise linear map $\Phi_{\mathcal{E}}$ is buildingwise convex (respectively strictly buildingwise convex), in the sense of Definition \ref{def-buildingwise-convex} (respectively Definition \ref{def-strictly-convex}).
\end{theorem}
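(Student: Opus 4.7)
The plan is to reduce the statement to the curve criterion \cite[Theorem 2.1]{HMP}, which says that $\E$ is nef (respectively ample) on $X_\Sigma$ if and only if, for every $T$-invariant curve $C \subset X_\Sigma$, the restriction $\E_{|C}$ is nef (respectively ample) on $C \cong \mathbb{P}^1$. Since every $T$-invariant curve comes from a codimension-one cone $\tau \in \Sigma(n-1)$ with two adjacent maximal cones $\sigma, \sigma' \in \Sigma(n)$, it suffices to show that for each such pair $(\sigma, \sigma')$, the inequalities in Definition \ref{def-buildingwise-convex} (respectively Definition \ref{def-strictly-convex}) are equivalent to $\E_{|C}$ being nef (respectively ample) on $\mathbb{P}^1$.

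First, by Corollary \ref{cor-equiv-split-curve} (equivariant splitting on $C$), there is a permutation $\pi$ of $\{1, \ldots, r\}$ such that $\E_{|C} \cong \bigoplus_{i=1}^r \L_{u_i, u'_{\pi(i)}}$, where $u(\sigma) = \{u_1, \ldots, u_r\}$ and $u(\sigma') = \{u'_1, \ldots, u'_r\}$ are the character multisets associated to the frames $L$, $L'$ used in Definition \ref{def-buildingwise-convex}. By the degree calculation recalled immediately before Corollary \ref{cor-equiv-split-curve}, each summand has degree $\langle u_i - u'_{\pi(i)}, \v_\tau\rangle$. A vector bundle on $\mathbb{P}^1$ is nef (respectively ample) if and only if every line bundle summand has non-negative (respectively positive) degree, so $\E_{|C}$ is nef iff $\langle u_i - u'_{\pi(i)}, \v_\tau\rangle \geq 0$ for every $i$, and ample iff these are all strictly positive. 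Because $u_i - u'_{\pi(i)} \in \tau^\perp$, the linear functional $\langle u_i - u'_{\pi(i)}, \cdot\rangle$ vanishes on $\tau$ and has a single sign on each side of $\tau$; thus its non-negativity at $\v_\tau \in \sigma$ is equivalent to $\langle u_i - u'_{\pi(i)}, x\rangle \geq 0$ for all $x \in \sigma$ and (since $\sigma'$ lies on the opposite side of $\tau$) to $\langle u_i - u'_{\pi(i)}, x'\rangle \leq 0$ for all $x' \in \sigma'$. The strict case is identical with $\geq$ replaced by $>$ on $\sigma \setminus \tau$ and $\leq$ by $<$ on $\sigma' \setminus \tau$.

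Next, I unpack the buildingwise convexity inequalities of Definition \ref{def-buildingwise-convex}. Evaluating $T(x') \leq \Phi(x')$ on a basis vector $e = e'_{\pi(i_0)}$ of $L'$ collapses both minima in \eqref{equ-T-T'} to single terms and yields $\langle u_{i_0}, x'\rangle \leq \langle u'_{\pi(i_0)}, x'\rangle$. Conversely, given $\langle u_i, x'\rangle \leq \langle u'_{\pi(i)}, x'\rangle$ for all $i$, an arbitrary nonzero $e = \sum_j e'_j$ with support $S = \{i \mid e'_{\pi(i)} \neq 0\}$ satisfies
\[
T(x')(e) \;=\; \min_{i \in S}\langle u_i, x'\rangle \;\leq\; \langle u_{i^{**}}, x'\rangle \;\leq\; \langle u'_{\pi(i^{**})}, x'\rangle \;=\; \min_{i \in S}\langle u'_{\pi(i)}, x'\rangle \;=\; \Phi(x')(e),
\]
where $i^{**}$ minimizes $\langle u'_{\pi(i)}, x'\rangle$ over $S$. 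Thus the valuation-wise inequality $T(x') \leq \Phi(x')$ is equivalent to the $r$ componentwise linear inequalities, which match precisely the $\sigma'$-side conditions derived in the previous paragraph. The symmetric analysis of $T'(x) \leq \Phi(x)$ on $\sigma$ gives the $\sigma$-side conditions. The strict case proceeds identically: the same index $i^{**}$ witnesses the strict chain because $\langle u_{i^{**}}, x'\rangle < \langle u'_{\pi(i^{**})}, x'\rangle$ whenever $x' \in \sigma' \setminus \tau$. Putting the two sides together yields the desired equivalence.

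The main obstacle will be verifying that Definitions \ref{def-buildingwise-convex} and \ref{def-strictly-convex} are independent of the choice of permutation $\pi$ matching the frames $L$ and $L'$; this is handled by Lemma \ref{lem-B3-val} together with the uniqueness clause in Corollary \ref{cor-equiv-split-curve}. Everything else is the bookkeeping above, translating the partial order on valuations into componentwise sign conditions on the pairs $(u_i, u'_{\pi(i)})$ and combining with the HMP curve criterion and the degree formula on $C \cong \mathbb{P}^1$.
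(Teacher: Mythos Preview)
Your proposal is correct and follows essentially the same approach as the paper: reduce to the HMP curve criterion, use Corollary \ref{cor-equiv-split-curve} to split $\E_{|C}$ and read off the degrees $\langle u_i - u'_{\pi(i)}, \v_\tau\rangle$, then show the valuation-wise inequality $T(x') \leq \Phi(x')$ is equivalent to the $r$ componentwise sign conditions by evaluating on frame vectors. Your write-up is in fact slightly more complete than the paper's, since you spell out the converse direction (componentwise $\Rightarrow$ valuation-wise) via the index $i^{**}$, whereas the paper only records the forward implication explicitly.
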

\begin{proof}
By \cite[Theorem 2.1]{HMP}, a toric vector bundle is nef (respectively ample) if and only if its restriction to any $T$-invariant curve is nef (respectively ample). More precisely, let $C$ be a $T$-invariant curve in $X_\Sigma$ corresponding to a cone $\tau \in \Sigma(n-1)$. Since $\Sigma$ is complete there are maximal cones $\sigma, \sigma' \in \Sigma(n)$ with $\tau = \sigma \cap \sigma'$. Let $u, u' \in M$ be such that $u-u'$ is orthogonal to $\tau$. Let $\L_{u,u'}$ be the corresponding line bundle on $U_\sigma \cup U_{\sigma'}$ as in the paragraph before Corollary \ref{cor-equiv-split-curve}. We recall that if $\L_u$, $\L_{u'}$ are trivial line bundles on affine toric charts $U_\sigma$, $U_{\sigma'}$ with $T$-linearizations given by characters $\chi^u$, $\chi^{u'}$, then $\L_{u, u'}$ is the line bundle on $U_\sigma \cup U_{\sigma'}$ constructed by gluing $\L_u$ and $\L_{u'}$ via the transition function $\chi^{u-u'}$ which is regular and invertible on $U_\tau$. Let $\v_\tau \in \sigma$ be the vector that is dual to the primitive generators of $\tau^\perp$. One shows that the line bundle ${\L_{u,u'}}_{|C}$ is isomorphic to the line bundle 
$\mathcal{O}_{\mathbb{P}^1}(\langle u, \v_\tau \rangle D_1 - \langle  u', \v_\tau \rangle D_2)$ on $\mathbb{P}^1$, where $D_1, D_2$ are the $T$-fixed points in $\mathbb{P}^1$ (under the isomorphism $C \cong \mathbb{P}^1$, $x_\sigma \mapsto D_1$ and $x_{\sigma'} \mapsto D_2$). The line bundle $\L_{u, u'}$ on $C \cong \mathbb{P}^1$ is nef (respectively ample) if and only $a = \langle u, \v_\tau \rangle - \langle u', \v_\tau \rangle \geq 0$ (respectively $a > 0$). Now let $\pi$ be the permutation used in the definition of $T$ and $T'$. Then the vector bundle $\E_{|C}$ splits equivariantly as a direct sum of line bundles $\L_{u_1, u'_{\pi(1)}|C} \oplus \cdots \oplus \L_{u_r, u'_{\pi(r)}|C}$. For each $i$, let us write $\L_{u_i, u'_{\pi(i)}} = \mathcal{O}_{\mathbb{P}^1}(a_i)$. From the above, we know that $u_i - u'_{\pi(i)}$ is $a_i$ times the primitive generator of $\tau^\perp$ which is positive on $\sigma$. Now $\E_{|C}$ is nef (respectively ample) if and only if $a_i \geq 0$ (respectively $a_i > 0$) for all $i$. On the other hand, with notation as in Definition \ref{def-buildingwise-convex}, the condition $T'(x) \leq \Phi(x)$ means that $$\min\{\langle u'_{\pi(i)}, x \rangle \mid e_i \neq 0 \} \leq \min\{ \langle u_i, x \rangle \mid e_i \neq 0 \}, ~\forall x \in \sigma',~\forall 0 \neq e \in E.$$
For each $i$, taking $e=e_i$ implies that $\langle u'_{\pi(i)}, x \rangle \leq \langle u_i, x \rangle$. This shows that the nefness condition above is equivalent to $\langle u_i - u'_{\pi(i)}, x \rangle \geq 0$, for all $x \in \sigma'$ and all $i$. This in turn is equivalent to  $\langle u_i - u'_{\pi(i)}, \v_\rho \rangle \geq 0$, for all $\rho \in \sigma'(1)$ and all $i$. This finishes the proof.
\end{proof}

The next theorem gives a criterion for global generation of a toric vector bundle in terms of the (fanwise) convexity of the corresponding piecewise linear map. It is a corollary of \cite[Theorem 1.2]{DJS}. 
\begin{theorem}   \label{th-globally-generated-convex}
A toric vector bundle $\mathcal{E}$ over a complete toric variety is globally generated if and only if the corresponding piecewise linear map $\Phi$ is fanwise convex (in the sense of Definition \ref{def-fanwise-convex}).
\end{theorem}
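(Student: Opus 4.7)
The plan is to regard Theorem \ref{th-globally-generated-convex} as a translation of the Di Rocco--Jabbusch--Smith criterion \cite[Theorem 1.2]{DJS} into the valuation-theoretic language of Section \ref{subsec-PL-maps}. That criterion shows that $\E$ is globally generated if and only if, for every maximal cone $\sigma \in \Sigma(n)$ and every character $u_{\sigma,i} \in u(\sigma)$ from the equivariant splitting on $U_\sigma$ (Proposition \ref{prop-toric-vb-over-affine-equiv-trivial}), the line $L_{\sigma,i}$ of the compatible frame $L_\sigma$ sits inside $E^\rho_{\langle u_{\sigma,i}, \v_\rho\rangle}$ for every ray $\rho \in \Sigma(1)$; equivalently, a nonzero $e_{\sigma,i} \in L_{\sigma,i}$ extends to a global section of $\E$ of weight $u_{\sigma,i}$ that does not vanish at the fixed point $x_\sigma$. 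Via the identification of integral valuations with filtrations (Proposition \ref{prop-preval-filtration}), this is precisely the valuation-theoretic inequality
\[
\Phi(\v_\rho)(e_{\sigma,i}) \;\geq\; \langle u_{\sigma,i}, \v_\rho\rangle \;=\; S_\sigma(\v_\rho)(e_{\sigma,i})
\]
for every $\sigma$, every $i$, and every $\rho \in \Sigma(1)$. The proof will therefore reduce to showing that this ``rays-and-basis-vectors'' version of the inequality $S_\sigma \leq \Phi$ is equivalent to the full fanwise convexity of Definition \ref{def-fanwise-convex}.

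The forward direction is immediate: if $S_\sigma(x) \leq \Phi(x)$ holds for all $x \in N_\R$ and all $e \in E$, I simply specialize to $e = e_{\sigma,i}$ and $x = \v_\rho$ and read off the DJS inequality.

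For the converse, my strategy is a two-step upgrade. First, I fix a basis vector $e_{\sigma,i}$ and an arbitrary second maximal cone $\sigma' \in \Sigma(n)$, and decompose $e_{\sigma,i} = \sum_j e'_j$ in the frame $L_{\sigma'}$. By the Klyachko compatibility condition \eqref{equ-Klyachko-comp-condition} for $\sigma'$, the DJS inclusion $e_{\sigma,i} \in E^\rho_{\langle u_{\sigma,i}, \v_\rho\rangle}$ for $\rho \in \sigma'(1)$ forces $\langle u_{\sigma',j}, \v_\rho\rangle \geq \langle u_{\sigma,i}, \v_\rho\rangle$ for every $j$ with $e'_j \neq 0$. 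Because $\sigma'$ is generated as a convex cone by its rays, this upgrades to $\langle u_{\sigma',j} - u_{\sigma,i}, x\rangle \geq 0$ for every $x \in \sigma'$, and taking the minimum over such $j$ yields $\Phi(x)(e_{\sigma,i}) \geq \langle u_{\sigma,i}, x\rangle = S_\sigma(x)(e_{\sigma,i})$ on $\sigma'$; completeness of $\Sigma$ then propagates this to all of $N_\R$.

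Second, I extend from basis vectors to arbitrary $e \in E$. Writing $e = \sum_i e_i$ in the frame $L_\sigma$ and combining the defining formula \eqref{equ-val-min} for $S_\sigma(x)$ with the non-Archimedean property of the valuation $\Phi(x)$, I get
\[
\Phi(x)(e) \;\geq\; \min_{e_i \neq 0} \Phi(x)(e_i) \;\geq\; \min_{e_i \neq 0} \langle u_{\sigma,i}, x\rangle \;=\; S_\sigma(x)(e),
\]
which is exactly fanwise convexity. The main obstacle will be the first of these two upgrades: the passage from a ray-by-ray inequality to an inequality on the whole of $N_\R$ at the level of basis vectors. This is precisely where the Klyachko compatibility of frames enters in an essential way, converting the finite collection of $1$-dimensional DJS constraints attached to the rays of $\sigma'$ into the single linear inequality $u_{\sigma',j} - u_{\sigma,i} \in (\sigma')^\vee$ valid on the full cone.
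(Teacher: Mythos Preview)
Your proposal is correct and follows the same approach as the paper's proof: both invoke \cite[Theorem 1.2]{DJS} and translate the polytope condition $u_{\sigma,i} \in P_{e_i}$ into the valuation inequality $S_\sigma(x)(e_{\sigma,i}) \leq \Phi(x)(e_{\sigma,i})$. Your two-step upgrade (rays to all $x$ via the Klyachko compatibility on each $\sigma'$, then basis vectors to all $e$ via the non-Archimedean property) spells out in detail what the paper compresses into the phrases ``because of piecewise linearity of $\Phi$'' and ``which is equivalent to the definition of fanwise convexity of $\Phi$.''
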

\begin{proof}
For $\sigma \in \Sigma(n)$ let $L_\sigma = \{L_1, \ldots, L_r\}$ be a frame associated to it and corresponding multiset of vectors $\{u_{\sigma, 1}, \ldots, u_{\sigma, r}\}$. For every $L_i \in L_\sigma$ choose $0 \neq e_i \in L_i$. \cite[Theorem 1.2]{DJS} gives a necessary and sufficient condition for $\E$ to be globally generated. In our language of piecewise linear maps, this condition can be stated as follows: For every $\sigma \in \Sigma(n)$, there exists a compatible frame $L_\sigma$ such that for all $1 \leq i \leq r$ the point $u_{\sigma, i}$ lies in the polytope $P_{e_i} = \{y \mid \langle y, \v_\rho  \rangle \leq \Phi(\v_\rho)(e_i),~\forall \rho \in \Sigma(1)\}$. This then implies that $u_{\sigma, i}$ is a vertex of $P_{e_i}$. We would like to show that this condition is equivalent to fanwise convexity of $\Phi$. We note that $u_{\sigma, i}$ lies in $P_{e_i}$ if and only if: 
\begin{equation*} 
\langle u_{\sigma, i},  \v_\rho \rangle \geq \Phi(\v_\rho)(e_i), \quad \forall \rho \in \Sigma(1).
\end{equation*}
Because of piecewise linearity of $\Phi$, this in turn is equivalent to:
\begin{equation*} 
\langle u_{\sigma, i}, x \rangle \geq \Phi(x)(e_i), \quad \forall x \in N_\R
\end{equation*}
The above equation means that the graph of the piecewise linear function $x \mapsto \Phi(x)(e_i)$ lies below that of the linear function $x \mapsto \langle x, u_{\sigma, i}\rangle$, for all $i$, which is equivalent to the definition of fanwise convexity of $\Phi$. 
\end{proof}

\section{Toric vector bundles as valuations}
\label{sec-tvbs-PL-val}
In this section we introduce the notion of a vector space valuation with values in a semilattice $(\Gamma, \wedge)$. We consider the semilattice $(\PL(N, \Z), \min)$ where as usual $\PL(N, \Z)$ is the set of $\Z$-valued piecewise linear functions on a lattice $N \cong \Z^r$. We show that the valuations on $E$ with values in this semilattice classify toric vector bundles $\E$ with fiber $\E_{x_0} = E$  and up to toric pull-backs (Theorem \ref{th-semilattice-preval-toric-vb}). We caution that in this section (unfortunately) we have two different usages of the term \emph{lattice}: first we use lattice to mean a finite rank free abelian group (as in lattices $M$ and $N$), and second by a lattice we mean a meet-join lattice, a kind of partially ordered set.
  
\subsection{Valuations with values in a semilattice}  \label{subsec-semilattice-val}
Let $(\Gamma, \geq, \wedge)$ be a meet-semilattice. That is, $(\Gamma, \geq)$ is a partially ordered set (poset) together with a binary operation $\wedge$ (meet) of greatest lower bound. That is, for any $\gamma, \eta \in \Gamma$, their meet $\gamma \wedge \eta$ is $\leq$ both $\gamma$ and $\eta$, and whenever we have $\mu \leq \gamma$, $\mu \leq \eta$, for some $\mu \in \Gamma$, then $\mu \leq \gamma \wedge \eta$. We also assume that $\Gamma$ has a (unique) maximum element denoted by $\infty$.

\begin{definition}[Semilattice valuation]  \label{def-semilattice-val}
Let $\pi: E \to \Gamma$ be a map that satisfies the following:
\begin{itemize}
\item[(a)] For any $e \in E$ and any $0 \neq c \in \k$ we have $\pi(c e) = \pi(e)$.
\item[(b)](Non-Archimedean property) For any $e_1, e_2 \in E$ we have:
\begin{equation}  
\pi(e_1+e_2) \geq \pi(e_1) \wedge \pi(e_2).
\end{equation}
\item[(c)] $\pi(e) = \infty$ if and only if $e = 0$.
\end{itemize}
We call such a map $\pi$ a {\it semilattice valuation} (or just a {\it valuation} for short) on $E$ with values in $\Gamma$. If the semilattice generated by the image of $\pi$ is a finite set we call $\pi$ a \emph{finite valuation}.
\end{definition}
From the definition it follows that for every $\gamma \in \Gamma$ the set:
$$E_{\geq \gamma} = \{e \in E \mid \pi(e) \geq \gamma \}$$
is a vector subspace of $E$.
To any semilattice valuation $\pi$ on $E$ and a subset $S \subset \Gamma$ we associate the arrangement of linear subspaces in $E$: $$\mathcal{A}_{\pi, S} = \{E_{\geq \gamma} \mid \gamma \in S \}.$$  

\begin{lemma}   \label{lem-semilattice-preval-arrangement}
Let $\pi: E \to \Gamma$ be a finite semilattice valuation and let $S \subset \Gamma$ be a subset. 
(1) The arrangement $\mathcal{A}_{\pi, S}$ consists of a finite number of subspaces.  
(2) If the semilattice $\Gamma$ is a lattice, i.e. also has an operation $\vee$ (join) of least upper bound, and $S$ is closed under $\vee$ then the subspace arrangement $\mathcal{A}_{\pi, S} = \{ E_{\geq \gamma} \mid \gamma \in S\}$ is closed under intersection.
\end{lemma}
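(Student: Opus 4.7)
The plan is to handle the two parts essentially independently, with (1) reducing to a counting argument on a finite set and (2) being a direct consequence of the universal property of joins.

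For part (1), I would first observe that the hypothesis that the meet-semilattice generated by $\pi(E)$ is finite forces $\pi(E)$ itself to be finite, since $\pi(E)$ is a subset of the semilattice it generates. The key step is then to show that the subspace $E_{\geq \gamma}$ depends only on the subset $\pi(E) \cap \{t \in \Gamma \mid t \geq \gamma\} \subseteq \pi(E)$. Indeed, for any $e \in E \setminus \{0\}$, the inequality $\pi(e) \geq \gamma$ is equivalent to $\pi(e)$ belonging to this subset. Since $\pi(E)$ is finite, it has only finitely many subsets, and hence there are only finitely many distinct subspaces of the form $E_{\geq \gamma}$ as $\gamma$ ranges over $\Gamma$ (and in particular over $S$). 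I should also quickly verify that each $E_{\geq \gamma}$ really is a subspace: closure under scalar multiplication follows from axiom (a) in Definition \ref{def-semilattice-val}, and closure under addition follows from axiom (b) together with the fact that if $\pi(e_1), \pi(e_2) \geq \gamma$, then $\pi(e_1) \wedge \pi(e_2) \geq \gamma$ by the defining property of the meet.

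For part (2), the assertion I want to establish is the identity
$$E_{\geq \gamma_1} \cap E_{\geq \gamma_2} = E_{\geq \gamma_1 \vee \gamma_2}$$
for all $\gamma_1, \gamma_2 \in S$. This is essentially a tautology from the universal property of the join: an element $e$ satisfies $\pi(e) \geq \gamma_1$ and $\pi(e) \geq \gamma_2$ if and only if $\pi(e) \geq \gamma_1 \vee \gamma_2$. Since $S$ is assumed closed under $\vee$, the element $\gamma_1 \vee \gamma_2$ lies in $S$, and so $E_{\geq \gamma_1 \vee \gamma_2}$ is again in $\mathcal{A}_{\pi,S}$. An obvious induction then shows that arbitrary finite intersections of members of $\mathcal{A}_{\pi,S}$ remain in $\mathcal{A}_{\pi,S}$.

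There is no real obstacle here; the entire proof is short and mostly formal. The only point that needs a moment's care is the observation at the start of (1) that a subset of a finite semilattice is finite, which makes the counting argument immediate rather than requiring one to argue via chains in $\Gamma$ itself.
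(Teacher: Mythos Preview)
Your proposal is correct and, for part (2), identical in spirit to the paper's proof (the paper writes out the $s$-fold intersection directly rather than doing the binary case plus induction, but this is cosmetic).

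For part (1) there is a small but genuine difference worth noting. The paper argues by constructing, for each $\gamma \in \Gamma$, the element $\gamma' = \bigwedge\{\pi(e) \mid \pi(e) \geq \gamma\}$ in the semilattice generated by $\pi(E)$ and showing $E_{\geq \gamma} = E_{\geq \gamma'}$; thus every subspace in $\mathcal{A}_{\pi,S}$ is already indexed by the finite semilattice generated by $\pi(E)$. Your route is more elementary: you observe that $E_{\geq \gamma}$ is the $\pi$-preimage of the upper set $\{t \in \pi(E) \mid t \geq \gamma\}$, and since $\pi(E)$ is finite there are only finitely many such upper sets. Your argument never uses the meet operation at all, which is a minor simplification; on the other hand, the paper's version yields a canonical representative $\gamma'$ for each subspace, which is occasionally convenient downstream.
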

\begin{proof}
(1) It suffices to show that for any $\gamma \in \Gamma$ there exits $\gamma'$ in the semilattice generated by the image of $\pi$ such that $E_{\geq \gamma} = E_{\geq \gamma'}$. 
Let $\gamma'$ be the greatest lower bound of all elements in the image of $\pi$ which are $\geq \gamma$. This exists since the image of $\pi$ is finite. Then $\gamma' \geq \gamma$ implies that $E_{\geq \gamma'} \subset E_{\geq \gamma}$. Suppose for some $0 \neq e \in E$ we have $\pi(e) \geq \gamma$. From the above we see that $\pi(e) \geq \gamma'$ and hence $E_{\geq \gamma'} = E_{\geq \gamma}$. 
(2) Let $\gamma_1, \ldots, \gamma_s \in S$ and consider $E_{\geq \gamma_1} \cap \cdots \cap E_{\geq \gamma_s}$. We claim that $E_{\geq \gamma_1} \cap \cdots \cap E_{\geq \gamma_s} = E_{\geq \gamma}$ where $\gamma = \gamma_1 \vee \cdots \vee \gamma_s$. If for some $e$ we have $\pi(e) \geq \gamma_i$, for all $i$, then $\pi(e) \geq \gamma_1 \vee \cdots \vee \gamma_s$ and hence $e \in E_{\geq \gamma}$. Conversely, $\pi(e) \geq \gamma_1 \vee \cdots \vee \gamma_s$ implies that $\pi(e) \geq \gamma_i$ for all $i$ and hence $e \in E_{\geq \gamma_1} \cap \cdots \cap E_{\geq \gamma_s}$.
\end{proof}


We end the section by recalling the \emph{matroid} associated to a linear subspace arrangement. Suppose $\mathcal{A} = \{V_1, \ldots, V_s\}$ is an arrangement of linear subspaces in $E$ that is closed under intersection. To $\mathcal{A}$ one naturally associates a matroid $M(\mathcal{A})$ as follows (see \cite[Section 4]{Ziegler}). Note that we use the dual of the definition and statement in \cite[Definition 4.8]{Ziegler}. 
Let $U_1, \ldots, U_m$ be subspaces in $\mathcal{A}$ that are not sum of other subspaces in $\mathcal{A}$. For each $i=1, \ldots, m$, pick a basis $B_i$ for $U_i$ and let $\mathcal{B} = \bigcup_{i=1}^m B_i$. One says that such a spanning set $\mathcal{B}$ is {\it generic} (for the subspace arrangement $\mathcal{A}$) if the following is satisfied: 
For any $B_0 \subset \mathcal{B}$ and $e_i \in B_i \setminus B_0$, $e_i$ lies in the span of $B_0$ if only if the whole $U_i = \Span(B_i)$ lies in the span of $B_0$. The following is known (see \cite[Theorem 4.9]{Ziegler}):
\begin{theorem}[Matroid associated to a subspace arrangement]   \label{th-matroid-of-arrangement}
Let $\mathcal{B}$ be generic with respect to an arrangement of linear subspaces $\mathcal{A}$ that is closed under intersection. Then the matroid structure of the set of vectors $\mathcal{B}$ only depends on $\mathcal{A}$ (i.e. is independent of the choice of $\mathcal{B}$).
\end{theorem}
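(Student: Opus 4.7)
The plan is to prove the theorem by identifying a combinatorial rank function that is intrinsic to $\mathcal{A}$ and showing that any generic spanning set realizes exactly this matroid.

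First, I would observe that the indexing set $\{U_1, \ldots, U_m\}$ is itself an invariant of $\mathcal{A}$: by definition, these are precisely the elements of $\mathcal{A}$ that cannot be written as a sum of other elements of $\mathcal{A}$, and this characterization makes no reference to $\mathcal{B}$. Consequently, any generic spanning set $\mathcal{B} = \bigsqcup_i B_i$ carries a canonical partition indexed by the same fixed list, with $|B_i| = \dim U_i$. Thus for any two generic spanning sets $\mathcal{B}, \mathcal{B}'$ one can choose a partition-respecting bijection $\phi : \mathcal{B} \to \mathcal{B}'$, and it suffices to check that $\phi$ carries linearly independent subsets to linearly independent subsets.

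Next, I would introduce an abstract rank function. For $S \subseteq \mathcal{B}$, set $n_i := |S \cap B_i|$ and define
$$r(S) \;:=\; \max\Big\{\dim\big(\textstyle\sum_i W_i\big) \,:\, W_i \subseteq U_i,\ \dim W_i = n_i\Big\}.$$
This value depends only on the tuple $(n_i)_i$ and on the linear configuration of the subspaces $U_i$ inside $E$, equivalently only on $\mathcal{A}$. Since $\Span(S \cap B_i) \subseteq U_i$ is one admissible choice of $W_i$, we automatically get the easy bound $\dim(\Span S) \leq r(S)$.

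The heart of the proof is the reverse inequality $\dim(\Span S) \geq r(S)$, which is where genericity enters. I would argue this by induction on $|S|$, using the dichotomy furnished by the genericity hypothesis: for $e \in B_i \setminus S_0$, either $U_i \subseteq \Span S_0$ (forcing $e \in \Span S_0$, so the span of $S_0 \cup \{e\}$ does not grow) or $U_i \not\subseteq \Span S_0$ (so $\dim(\Span(S_0 \cup \{e\})) = \dim(\Span S_0) + 1$). One then verifies the parallel jump for the abstract rank: $r(S_0 \cup \{e\}) - r(S_0) \in \{0,1\}$, with value $0$ iff the optimal configuration $(W_j^*)$ realizing $r(S_0)$ already has $U_i \subseteq \sum_j W_j^*$. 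Matching these two dichotomies is exactly what genericity supplies.

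The main obstacle will be the compatibility of "optimal" tuples $(W_j^*)$ across the induction, that is, showing that an optimizer for $r(S_0 \cup \{e\})$ can be obtained by enlarging an optimizer for $r(S_0)$ rather than requiring a completely different configuration. This is essentially a general position claim about subspaces with a prescribed incidence pattern inside $E$, and once it is established the induction closes. The conclusion is that the matroid structure on any generic $\mathcal{B}$ has rank function $r$, which manifestly depends only on $\mathcal{A}$.
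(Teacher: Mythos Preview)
The paper does not prove this statement; it is quoted from \cite[Theorem 4.9]{Ziegler} and used as a black box. There is therefore no in-paper argument to compare your proposal against.

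Evaluating your proposal on its own: the overall strategy of defining an intrinsic rank function $r$ and showing that every generic $\mathcal{B}$ realizes it is sound, and both the easy bound $\dim(\Span S) \le r(S)$ and the inductive step in the case $U_i \not\subseteq \Span S_0$ go through as you describe. The genuine gap is exactly the one you flag as the ``main obstacle''. In the case $U_i \subseteq \Span S_0$ you need $r(S_0 \cup \{e\}) = r(S_0)$, i.e.\ that no \emph{other} admissible tuple $(W_j')$ can reach $r(S_0)+1$ after increasing $\dim W_i'$ by one. Your inductive hypothesis only certifies that the particular tuple $W_j = \Span(S_0 \cap B_j)$ is optimal for $r(S_0)$; it does not preclude a different optimal tuple $(W_j')$ with $U_i \not\subseteq \sum_j W_j'$, and for such a tuple nothing stops the max from increasing. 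You write that ``matching these two dichotomies is exactly what genericity supplies'', but genericity is a hypothesis about the specific vectors in $\mathcal{B}$, whereas $r$ is defined purely in terms of abstract subspaces $W_j \subseteq U_j$ with no reference to $\mathcal{B}$ at all; so it is not clear how genericity can constrain $r$. As written, this is a plausible outline with its hardest step explicitly left open, not a proof.

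If you want to pursue this route, you would need an intrinsic statement of the form: whenever $U_i \subseteq \sum_j W_j$ for \emph{some} optimal tuple with parameters $(n_j)$, then $U_i \subseteq \sum_j W_j'$ for \emph{every} optimal tuple. That is a nontrivial structural fact about the arrangement (and this is where the hypothesis that $\mathcal{A}$ is closed under intersection should enter, which your current sketch never uses). Alternatively, it may be cleaner to characterize the circuits of the matroid directly in terms of the intersection lattice of $\mathcal{A}$, as Ziegler does, rather than going through a maximization.
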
 

The above motivates us to make the following definition which is used in Section \ref{subsec-parliament} in connection to the notion of a parliament of polytopes.
\begin{definition}[Matroids associated to a semilattice valuation]   \label{def-matroid-semilattice-val}
Let $(\Gamma, \geq, \wedge, \vee)$ be a lattice. Let $\pi: E \to \Gamma$ be a finite semilattice valuation and $S \subset \Gamma$ a subset closed under the join operation $\vee$. The \emph{matroid $M(\pi, S)$ associated to $(\pi, S)$} is the (representable) matroid corresponding to the subspace arrangement $\mathcal{A}_{\pi, S}$ (as in Theorem \ref{th-matroid-of-arrangement}). We note that since $S$ is closed under $\vee$, by Lemma \ref{lem-semilattice-preval-arrangement}, the arrangement $\mathcal{A}_{\pi, S}$ is closed under intersection.
\end{definition}

\subsection{Valuations with values in piecewise linear functions and polytopes}  \label{subsec-preval-plf} 
Recall that a function $h: N_\R \to \R$ is piecewise linear if there exists a complete fan $\Sigma$ in $N_\R$ such that $h$ is linear restricted to each cone $\sigma \in \Sigma$. We denote the set of all piecewise linear functions on $N_\R$ by $\PL(N_\R, \R)$. Moreover, we add
a unique ``infinity element'' $\infty$ to $\PL(N_\R, \R)$ which is greater than any other element. We
regard it as the function which assigns value infinity to all points in $N_\R$.
There is a natural partial order on $\PL(N_\R, \R)$ where $\phi_1 \leq \phi_2$ if $\phi_1(x) \leq \phi_2(x)$, $\forall x \in N_\R$. If $\phi_1, \phi_2 \in \PL(N_\R, \R)$ then $\min(\phi_1, \phi_2)$ and $\max(\phi_1, \phi_2)$ also belong to $\PL(N_\R, \R)$. The partial order $\leq$ together with the operations $\min$ and $\max$ give $\PL(N_\R, \R)$ the structure of a lattice. We also denote the set of piecewise linear functions that attain integer values on $N$ by $\PL(N, \Z)$. 
Finally, for a complete fan $\Sigma$, we denote by $\PL(\Sigma, \R)$ the set of piecewise linear functions that are linear on cones in $\Sigma$ and $\PL(\Sigma, \Z)$ the subset of piecewise linear functions that have integer values on $N$. 


\begin{remark}   \label{rem-PL-semialgebra}
Since $\PL(N_\R, \R)$ is closed under addition, the set $\PL(N_\R, \R)$ in fact has structure of a semifield. But in this section we do not address its semifield structure. This semifield is used in Section \ref{sec-tvbs-trop-points} to give a characterization of toric vector bundles as tropical points of linear ideals over this semifield. This idea is explored and expanded in the companion paper \cite{KM-PL} where toric flat families are classified by algebra valuations with values in the semifield $\PL(N, \Z)$.
\end{remark}

In this section we look at finite valuations $\fv$ with values in the semilattice of piecewise linear functions $(\PL(N_\R, \R), \geq, \min)$. 

\begin{definition}[Piecewise linear valuation]  \label{def-PL-val}
Let $\fv: E \to \PL(N_\R, \R)$ be a finite semilattice valuation (see Definition \ref{def-semilattice-val}). We refer to $\fv$ as a \emph{finite piecewise linear valuation}. We call a piecewise linear valuation {\it integral} if it attains values in $\PL(N, \Z)$. 
\end{definition}


With notation as in Section \ref{sec-tvbs-plm}, to a piecewise linear map $\Phi: |\Sigma| \to \tilde{\B}(E)$ there naturally corresponds a map $\fv_\Phi: E \to \PL(N_\R, \R)$ given by:
\begin{equation}   \label{equ-h-Phi} 
\fv_\Phi(e)(x) = \Phi(x)(e), \quad \forall 0 \neq e \in E, \forall x \in N_\R.  
\end{equation}
We note that the piecewise linear functions $\fv_\Phi(e)$, $0 \neq e \in E$, are not necessarily linear on the cones of $\Sigma$, instead they are piecewise linear with respect a subdivision of $\Sigma$ which depends on the multiset $u(\sigma)$. This is because for $x \in \sigma$ we have $\fv_\Phi(e)(x) = \Phi(x)(e) = \min\{ \langle x, u_{\sigma, i} \rangle \mid e_i \neq 0\}$, where $e = \sum_i e_i$ is the decomposition of $e$ in the frame $L_\sigma$. 

Conversely, suppose $\fv: E \to \PL(N_\R, \R)$ is a piecewise linear valuation. To $\fv$ we can associate a map $\Phi_\fv: |\Sigma| \to \tilde{\B}(E)$ by:
\begin{equation} \label{equ-Phi_h}
\Phi_\fv(x)(e) = \fv(e)(x),~ \forall 0 \neq e \in E ~\forall x \in N_\R.
\end{equation}

The next theorem is the main result of this section. Part (2) in the theorem is the key part and is not an immediate corollary of definitions.
\begin{theorem}    \label{th-plm-vs-preval-plf}
With notation as above, we have the following.
\begin{itemize}
\item[(1)] 
The map $\fv_\Phi: E \to \PL(N_\R, \R)$ is a piecewise linear valuation. 
In fact, there is a subdivision $\Sigma'$ of $\Sigma$ such that $\fv_\Phi: E \to \PL(\Sigma', \R)$. Moreover, if $\Phi$ is integral then $\fv_\Phi: E \to \PL(\Sigma', \Z)$. 
\item[(2)] The map $\Phi_\fv$ is well-defined, that is, for any $x \in N_\R$, the function $\Phi_\fv(x)$ is a valuation on $E$. Moreover, there exists a complete fan $\Sigma$ such that $\Phi_\fv: |\Sigma| \to \tilde{\B}(E)$ is a piecewise linear map (in the sense of Definition \ref{def-plm}).
\item[(3)] The maps $\Phi \mapsto \fv_\Phi$ and $\fv \mapsto \Phi_\fv$ give a one-to-one correspondence between the set of maps $\Phi: N_\R \to \tilde{\B}(E)$ which are piecewise linear (with respect to a complete fan) and the set of finite piecewise linear valuations $\fv: E \to \PL(N_\R, \R)$. Moreover, this restricts to give a one-to-one correspondence between the integral finite piecewise linear maps $\Phi: N \to \tilde{\B}_\Z(E)$ and integral piecewise linear valuations $\fv: E \to \PL(N, \Z)$.
\end{itemize}
\end{theorem}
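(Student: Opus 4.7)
\emph{Part (1).} I would verify the valuation axioms for $\fv_\Phi$ by transcribing them pointwise from those of each $\Phi(x)$: axiom (a) follows from $\fv_\Phi(ce)(x) = \Phi(x)(ce) = \Phi(x)(e)$; axiom (b), the non-Archimedean inequality, holds at every $x$ and so globally as elements of $\PL(N_\R, \R)$; and for (c), if $\fv_\Phi(e)$ equals the maximal element (the constant $\infty$) then $\Phi(x)(e) = \infty$ for every $x$, forcing $e=0$. For piecewise linearity, the local formula $\Phi_{|\sigma}(x)(e) = \min\{\langle x, u_{\sigma,i}\rangle \mid e_i \neq 0\}$ coming from the frame $L_\sigma$ and multiset $u(\sigma)$ exhibits $\fv_\Phi(e)|_\sigma$ as a minimum of finitely many linear functions; the subdivision $\Sigma'$ of $\Sigma$ obtained by further cutting along the hyperplanes $\{\langle x, u_{\sigma, i} - u_{\sigma, j}\rangle = 0\}$ then gives $\fv_\Phi \colon E \to \PL(\Sigma', \R)$, with integrality immediate from integrality of the $u_{\sigma, i}$.

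\emph{Part (2), the main obstacle.} That each $\Phi_\fv(x) \colon E \to \overline{\R}$ is a valuation is the same pointwise transcription as in (1); axiom (c) uses that for $e \neq 0$ the image $\fv(e)$ is a genuine piecewise linear function, not the top element $\infty$, so $\fv(e)(x) < \infty$ at every $x$. The substantive step is constructing a complete fan $\Sigma$ and, for each maximal cone $\sigma$, a frame $L_\sigma$ witnessing piecewise linearity in the sense of Definition \ref{def-plm}. Let $\Gamma_0 \subset \PL(N_\R, \R)$ be the finite sublattice generated under $\min$ and $\max$ by the image of $\fv$ (finite because $\fv$ is a finite valuation); choose a complete refinement $\Sigma$ on which every $\gamma \in \Gamma_0$ is linear, and refine further so that for every pair $\gamma, \eta \in \Gamma_0$ the sign of $\gamma - \eta$ is constant on each maximal cone of $\Sigma$. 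On such a $\sigma$, this gives a total preorder $\gamma_1 \succeq \gamma_2 \succeq \cdots \succeq \gamma_s$ on $\Gamma_0$, and since $\fv(e) \in \Gamma_0$ for every $e$, one obtains
\begin{equation*}
E_{\Phi_\fv(x) \geq a} \;=\; \sum_{\gamma \in \Gamma_0,\, \gamma(x) \geq a} E_{\geq \gamma} \;=\; \sum_{i \leq k(x,a)} E_{\geq \gamma_i}
\end{equation*}
for $x \in \mathrm{int}(\sigma)$, where $k(x,a)$ is the largest index with $\gamma_{k(x,a)}(x) \geq a$. Hence the flag $F^\sigma_\bullet$ of $\Phi_\fv(x)$ is a single flag, independent of the choice of $x \in \mathrm{int}(\sigma)$, with jump values that are linear in $x$.

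\emph{Completing (2) and Part (3).} Pick a frame $L_\sigma = \{L_{\sigma, 1}, \ldots, L_{\sigma, r}\}$ refining $F^\sigma_\bullet$. Since an adapted flag forces an adapted valuation via the minimum formula \eqref{equ-val-min}, $\Phi_\fv(x)$ is adapted to $L_\sigma$ throughout the closed cone $\sigma$ (on faces the flag can coarsen, but adaptedness to $L_\sigma$ persists). Writing $\fv(e_i)|_\sigma = \langle \cdot, u_{\sigma, i}\rangle$ for $0 \neq e_i \in L_{\sigma, i}$, one obtains $\Phi_\fv(x)(e) = \min\{\langle x, u_{\sigma, i}\rangle \mid e_i \neq 0\}$ for $x \in \sigma$, identifying $\Phi_\fv|_\sigma$ with the restriction of a linear map $N_\R \to \tilde{A}(L_\sigma)$; this completes part (2). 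Part (3) is then formal: by \eqref{equ-h-Phi} and \eqref{equ-Phi_h}, both round-trip composites $\fv \mapsto \Phi_\fv \mapsto \fv_{\Phi_\fv}$ and $\Phi \mapsto \fv_\Phi \mapsto \Phi_{\fv_\Phi}$ reduce to the tautology $\phi(x) = \phi(x)$, and both assignments manifestly preserve integrality, giving the integral restriction of the bijection.
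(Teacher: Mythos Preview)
Your proof is correct and follows essentially the same strategy as the paper: verify the valuation axioms pointwise, then for part (2) refine to a fan on which the finitely many values of $\fv$ are linear and pairwise comparable, extract a fixed flag on each maximal cone, and pick an adapted frame. The only notable presentational difference is that the paper works with a spanning set $\mathcal{B} \subset E$ whose $\fv$-image exhausts the image of $\fv$ and builds the flag from spans $\mathrm{span}\{b_1,\ldots,b_i\}$, whereas you work intrinsically with the subspaces $E_{\geq \gamma}$ and their partial sums; the paper also argues that the coarser fan (before the inequality refinement) already suffices, a step you do not need since the theorem only asks for \emph{some} complete fan.
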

\begin{proof}
(1) For every $x \in N_\R$, its image $\Phi(x)$ is a valuation $E \to \overline{\R}$. The first claim follows from this. To prove the second claim recall that by definition of a piecewise linear map, for each maximal cone $\sigma \in \Sigma$ there exists a multiset $u_\sigma = \{ u_{\sigma, 1}, \ldots, u_{\sigma, r}\}$ and a frame $L_\sigma$ such that, for all $x \in \sigma$, we have $\fv_\Phi(e)(x) = \Phi(x)(e) = \min\{ \langle x, u_{\sigma, i} \rangle \mid e_i \neq 0 \}$, where $e = \sum_i e_i$ is the decomposition of $e$ in the frame $L_\sigma$. In particular, there are only a finite number of possibilities for the piecewise linear functions $\fv_\Phi(e)$, for all $0 \neq e \in E$. Thus we can take $\Sigma'$ to be a refinement of $\Sigma$ such that all the $\fv_\Phi(e)$ are linear on the cones in $\Sigma'$. 
Finally, if $\Phi$ is integral then all the $\fv_\Phi(e)(x) = \Phi(x)(e)$ attain integer values when $x \in N$. This finishes the proof.

(2) The first claim, namely $\Phi_\fv(x)$ is a valuation for any $x \in N_\R$, follows immediately from the assumption that $\fv$ is a valuation with values in piecewise linear functions. It remains to show the existence of a complete fan $\Sigma$ such that $\Phi_\fv$ is a piecewise linear map with respect to $\Sigma$. Let us choose a finite spanning set $\mathcal{B} = \{b_1, \ldots, b_s\} \subset E$ such that $\fv(\mathcal{B})$ coincides with the image of $\fv$ (excluding $\infty$). For each $b \in \mathcal{B}$, let $\Sigma(b)$ be a complete fan such that $\fv(b)$ is piecewise linear with respect to $\Sigma(b)$. Let $\Sigma$ be a common refinement of all the $\Sigma(b)$, $b \in \mathcal{B}$. Now let us further refine the fan $\Sigma$ into a fan $\Sigma'$ according to the inequalities $\fv(b) \leq \fv(b')$ for all $b, b' \in \mathcal{B}$. Take a maximal cone $\sigma \in \Sigma$. Let $\sigma' \subset \sigma$ be a maximal cone in $\Sigma'$. By the construction of the fan $\Sigma'$, on the cone $\sigma'$ the functions $\h(b)$, $b \in \mathcal{B}$, are totally ordered. Without loss of generality, let us assume $\fv(b_1)(x) \leq \cdots \leq \fv(b_s)(x)$, $\forall x \in \sigma'$. Since $\mathcal{B}$ is a spanning set, we get a flag $F_{\sigma'}$ of subspaces in $E$ obtained by taking the span of $\{b_1, \ldots, b_i\}$ for every $i$. We can then choose a vector space basis $B_{\sigma'} \subset \mathcal{B}$ for $E$ that is adapted to this flag. Note that, by the construction of the fan $\Sigma$, the functions $\fv(b_i)$ are linear on the cone $\sigma$. It follows that if $\sigma'' \subset \sigma$ is another maximal cone in $\Sigma'$, the basis $B_{\sigma'}$ is also adapted to the flag $F_{\sigma''}$ of subspaces corresponding to $\sigma''$. This then implies that the map $\Phi_\fv$ is a piecewise linear map with respect to the fan $\Sigma$. More precisely, let $0 \neq e \in E$ and take $x \in \sigma$. Suppose $x$ lies in a maximal cone $\sigma' \in \Sigma'$ and $\fv(b_1)(x) \leq \cdots \leq \fv(b_s)(x)$. Then since $\fv(\mathcal{B})$ coincides with the image of $\fv$, we should have $\fv(e)(x) = \fv(b_k)(x)$, for some $b_k \in B_{\sigma'}$. On the other hand, since the basis $B_{\sigma'}$ is adapted to the flag $F_{\sigma'}$, if $e = \sum_i e_i$ then $e_i = 0$ for $i \geq k$. It follows that $\fv(e)(x) = \min\{ \fv(b_i)(x) \mid e_i \neq 0 \}$. This shows that $\fv(e)$ is linear on the cone $\sigma$.  

(3) Let $\Phi : |\Sigma| \to \tilde{\B}(E)$ be a piecewise linear map and put $\fv = \fv_\Phi$. It follows from the definition that for any $0 \neq e \in E$ and $x \in N_\R$ we have $\Phi_\fv(x)(e) = \fv(e)(x) = \Phi(x)(e)$ which shows that $\Phi_\fv = \Phi$. Conversely, let $\fv: E \to \PL(N_\R, \R)$ be a valuation and put $\Phi = \Phi_\fv$. In a similar way one verifies that $\fv_\Phi = \fv$. 
To prove the last claim one observes that if $\Phi$ is an integral piecewise linear map then $\fv_\Phi$ is an integral valuation and conversely if $\fv$ is an integral valuation then $\fv_\Phi$ is an integral piecewise linear map.
\end{proof}

Because of the duality between the set of convex polytopes and concave piecewise linear functions, it is natural also to look at valuations with values in convex polytopes. Let $\P(M_\R)$ denote the collection of all convex polytopes in the $\R$-vector space $M_\R$. Partially order $\P(M_\R)$ by reverse inclusion. The set $(\P(M_\R), \subseteq)$ has structure of a meet-join lattice. For $\Delta_1, \Delta_2 \in \P(M_\R)$, their join is $\Delta_1 \cap \Delta_2$ and their meet is the convex hull of $\Delta_1 \cup \Delta_2$. We denote it by $\Delta_1 \vee \Delta_2$ (note that we are considering reverse inclusion and hence meet and join are switched). 

\begin{remark}   \label{rem-semring-polytopes}
The set of polytopes $\P(M_\R)$ is moreover equipped with the Minkowski sum of polytopes which together with the convex hull of union makes it a semiring. In this paper we do not address this semiring structure. This semiring structure makes an appearance in the companion paper \cite{KM-PL}.
\end{remark}


Finally, we recall the correspondence between concave piecewise linear functions and convex polytopes. 
A function $\phi: N_\R \to \R$ is {\it concave} if for any $x_1, x_2 \in N_\R$ and $0 \leq t \leq 1$ we have: $$\phi(tx_1 + (1-t)x_2) \geq t\,\phi(x_1) + (1-t)\,\phi(x_2).$$
Note that the set $\CPL(N_\R, \R)$ of concave piecewise linear functions is closed under taking minimum and hence $(\CPL(N_\R, \R), \geq, \min)$ is a semilattice. 
Given a polytope $\Delta \in \P(M_\R)$ one defines its {\it support function} $\phi_\Delta: N_\R \to \R$ by:
\begin{equation}  \label{def-supp-function}
\phi_\Delta(x) = \min\{ \langle x, y \rangle \mid y \in \Delta\}.
\end{equation}
It is well-known that $\phi_\Delta$ is a piecewise linear function with respect to the normal fan of $\Delta$. Conversely, to each piecewise linear function there corresponds a (possibly empty) polytope $\Delta_\phi \in \P(M_\R)$ defined by:
\begin{equation}   \label{def-polytope-of-plf}
\Delta_\phi = \{ y \in M_\R \mid \langle x, y\rangle \geq \phi(x), \forall x \in N_\R \}.
\end{equation}
It is well-known that the maps $\phi \mapsto \Delta_\phi$ and $\Delta \mapsto \phi_\Delta$ give a one-to-one correspondence between the set of polytopes $\P(M_\R)$ and the set of concave piecewise linear functions $\CPL(N_\R, \R)$.

\begin{proposition}   \label{prop-polytopes-iso-CPL}
With notation as above, the maps $\Delta \mapsto \phi_\Delta$ and $\phi \mapsto \Delta_\phi$ give an isomorphism of the semilattices $(\P(M_\R), \subseteq, \vee)$ and $(\CPL(N_\R, \R), \geq, \min)$. 
\end{proposition}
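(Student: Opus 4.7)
The plan is to build on the bijection stated in the paragraph immediately preceding the proposition, namely that $\Delta \mapsto \phi_\Delta$ and $\phi \mapsto \Delta_\phi$ are mutually inverse bijections between $\P(M_\R)$ and $\CPL(N_\R, \R)$. Given this, all that remains is to check that the bijection is compatible with the semilattice structure. I would split this into two elementary verifications: (i) that the bijection is an order isomorphism between the two posets, and (ii) that it carries the operation $\vee$ (convex hull of union) to the operation $\min$. Since a bijective order isomorphism between meet-semilattices automatically preserves meets, strictly speaking (ii) follows from (i), but I would verify both directly for clarity.

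For (i), I would show $\Delta_1 \subseteq \Delta_2 \iff \phi_{\Delta_1} \geq \phi_{\Delta_2}$ pointwise. The forward implication is immediate from the definition $\phi_\Delta(x) = \min\{\langle x, y\rangle \mid y \in \Delta\}$, since taking the minimum over a larger set produces a smaller (or equal) value. For the converse, suppose $\phi_{\Delta_1} \geq \phi_{\Delta_2}$ pointwise and let $y \in \Delta_1$. Then for every $x \in N_\R$ we have $\langle x, y\rangle \geq \phi_{\Delta_1}(x) \geq \phi_{\Delta_2}(x)$, which by the defining formula $\Delta_{\phi_{\Delta_2}} = \{y \mid \langle x, y\rangle \geq \phi_{\Delta_2}(x), \forall x\}$ and the recovery identity $\Delta_2 = \Delta_{\phi_{\Delta_2}}$ places $y$ in $\Delta_2$.

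For (ii), I need to check $\phi_{\Delta_1 \vee \Delta_2} = \min(\phi_{\Delta_1}, \phi_{\Delta_2})$ pointwise, where $\Delta_1 \vee \Delta_2 = \conv(\Delta_1 \cup \Delta_2)$. The inequality $\phi_{\Delta_1 \vee \Delta_2} \leq \min(\phi_{\Delta_1}, \phi_{\Delta_2})$ is immediate from (i), since $\Delta_i \subseteq \Delta_1 \vee \Delta_2$. For the reverse inequality, any $y \in \conv(\Delta_1 \cup \Delta_2)$ can be written as $\alpha y_1 + \beta y_2$ with $y_i \in \Delta_i$ and $\alpha, \beta \geq 0$, $\alpha + \beta = 1$ (grouping the $\Delta_1$-terms and $\Delta_2$-terms of any convex combination); linearity then gives
\[
\langle x, y \rangle = \alpha \langle x, y_1 \rangle + \beta \langle x, y_2\rangle \geq \alpha\, \phi_{\Delta_1}(x) + \beta\, \phi_{\Delta_2}(x) \geq \min(\phi_{\Delta_1}(x), \phi_{\Delta_2}(x)),
\]
and taking the infimum over $y$ yields the claim.

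There is essentially no main obstacle: both verifications are routine consequences of the definition of the support function as a minimum of a linear functional, together with the recovery identity $\Delta = \Delta_{\phi_\Delta}$ cited as well-known. The only mildly subtle point is the observation that $\conv(\Delta_1 \cup \Delta_2)$ equals the set of convex combinations $\alpha y_1 + \beta y_2$ with $y_i \in \Delta_i$, which follows from the convexity of each $\Delta_i$ by regrouping an arbitrary convex combination of elements of $\Delta_1 \cup \Delta_2$.
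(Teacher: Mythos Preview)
Your proposal is correct and follows essentially the same approach as the paper: both rely on the already-stated bijection and then verify that it is an order isomorphism using the defining formulas for $\phi_\Delta$ and $\Delta_\phi$. The paper's proof is in fact terser than yours---it only checks the two order-preserving implications and stops, leaving implicit the point you make explicitly, that an order isomorphism of posets automatically respects the meet operations; your additional direct verification of $\phi_{\Delta_1 \vee \Delta_2} = \min(\phi_{\Delta_1}, \phi_{\Delta_2})$ is correct but, as you yourself note, redundant.
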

\begin{proof}
Let $\Delta_1, \Delta_2 \in \P(M_\R)$ be polytopes with $\Delta_1 \subset \Delta_2$. It follows from \eqref{def-supp-function} that $\phi_{\Delta_1} \geq \phi_{\Delta_2}$. Similarly, if  $\phi_1, \phi_2 \in \CPL(N_\R, \R)$ with $\phi_1 \geq \phi_2$ then from \eqref{def-polytope-of-plf} we have $\Delta_{\phi_1} \subset \Delta_{\phi_2}$. This proves the isomorphism between $\P(M_\R)$ and $\CPL(N_\R, \R)$ as posets. 
\end{proof}

\subsection{Toric vector bundles as piecewise linear valuations 
}  \label{subsec-toric-vb-preval-plf}
Fix a torus $T$ with lattice of one-parameter subgroups $N \cong \Z^n$. 
We first consider an equivalence relation on the collection of toric vector bundles on $T$-toric varieties. Let $X_\Sigma$, $X_{\Sigma'}$ be complete $T$-toric varieties equipped with toric vector bundles $\E$, $\E'$ respectively. We say that $(X_\Sigma, \E)$ is {\it equivalent} to $(X_{\Sigma'}, \E')$ if there is a complete toric variety $X_{\Sigma''}$ and $T$-equivariant morphisms $F: X_{\Sigma''} \to X_\Sigma$, $F': X_{\Sigma''} \to X_{\Sigma'}$ such that 
$F^*(\E)$ and ${F'}^*(\E')$ are isomorphic as toric vector bundles on $X_{\Sigma''}$.

It is well-known in toric geometry that the equivalence classes of toric line bundles over $T$-toric varieties are in one-to-one correspondence with integral piecewise linear functions on $N_\R$ (see \cite[Chapter 6]{CLS}). The next theorem can be considered as a generalization of this fact to toric vector bundles.
\begin{theorem}   \label{th-semilattice-preval-toric-vb}
The piecewise linear valuations $\fv: E \to \PL(N, \Z)$ are in one-to-one correspondence with the equivalence classes of toric vector bundles on complete toric varieties (with $E$ as the fiber over the identity). 
\end{theorem}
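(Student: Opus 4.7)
The plan is to assemble the theorem by composing the two correspondences already established. By Theorem \ref{th-Klyachko-plm}, toric vector bundles on a fixed $X_\Sigma$ with fiber $E$ over $x_0$ correspond to integral piecewise linear maps $\Phi:|\Sigma|\to\tilde{\B}(E)$. By Theorem \ref{th-plm-vs-preval-plf}(3), integral piecewise linear maps on complete fans correspond bijectively to integral piecewise linear valuations $\fv:E\to\PL(N,\Z)$, via $\fv_\Phi(e)(x)=\Phi(x)(e)$. Composing, for a toric vector bundle $\E$ on $X_\Sigma$ I would define the associated valuation $\fv_\E=\fv_{\Phi_\E}$, and the task is to show that this descends to a bijection on equivalence classes.

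First I would check the assignment is well-defined on equivalence classes. If $\pi:X_{\Sigma''}\to X_\Sigma$ is the toric morphism coming from a refinement $\Sigma''$ of $\Sigma$, the Klyachko filtrations of $\pi^*\E$ along any ray $\rho\in\Sigma''(1)$ are obtained by restricting the piecewise linear map $\Phi_\E$ to $\rho$; since $|\Sigma''|=|\Sigma|$, the underlying function $\Phi_{\pi^*\E}:N_\R\to\tilde{\B}(E)$ agrees with $\Phi_\E$. Hence $\fv_{\pi^*\E}=\fv_\E$, and equivalent pairs yield the same valuation.

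Next I would prove surjectivity. Given an integral piecewise linear valuation $\fv$, Theorem \ref{th-plm-vs-preval-plf}(2) supplies a complete fan $\Sigma$ on which the associated map $\Phi_\fv:|\Sigma|\to\tilde{\B}_\Z(E)$ is an integral piecewise linear map in the sense of Definition \ref{def-plm}. Applying Theorem \ref{th-Klyachko-plm}, this $\Phi_\fv$ is realized by a toric vector bundle $\E$ on $X_\Sigma$, unique up to isomorphism, and by construction $\fv_\E=\fv$.

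Finally I would verify injectivity on equivalence classes. Suppose $(X_{\Sigma_1},\E_1)$ and $(X_{\Sigma_2},\E_2)$ give rise to the same piecewise linear valuation $\fv$. Let $\Sigma$ be a common refinement of $\Sigma_1$ and $\Sigma_2$, further refined (using the construction in the proof of Theorem \ref{th-plm-vs-preval-plf}(2)) so that $\Phi_\fv$ is linear on each cone of $\Sigma$. On $X_\Sigma$, the pullbacks $\pi_1^*\E_1$ and $\pi_2^*\E_2$ both have piecewise linear map equal to $\Phi_\fv$ viewed on $|\Sigma|$, and hence by Theorem \ref{th-Klyachko-plm} are $T$-equivariantly isomorphic; this exhibits $(X_{\Sigma_1},\E_1)\sim(X_{\Sigma_2},\E_2)$.

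The principal technical point, and the step I expect to require the most care, is the fan-compatibility used in both surjectivity and injectivity: given a prescribed finite piecewise linear valuation (or a pair of fans on which such a valuation is already piecewise linear), we must produce a single complete fan that makes $\Phi_\fv$ linear on each cone while simultaneously refining the given fans. This is exactly what Theorem \ref{th-plm-vs-preval-plf}(2) provides; one simply intersects the fan it yields with a common refinement of $\Sigma_1$ and $\Sigma_2$, and the resulting fan satisfies all requirements, completing the argument.
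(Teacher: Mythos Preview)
Your proposal is correct and follows the same route as the paper: both arguments compose Theorem~\ref{th-Klyachko-plm} with Theorem~\ref{th-plm-vs-preval-plf} and use common refinements to pass between fans. Your version is in fact more thorough than the paper's own proof, which only explicitly verifies that the assignment $\fv\mapsto[\E_{\Phi_\fv}]$ is independent of the choice of fan, whereas you separately check well-definedness of $\E\mapsto\fv_\E$ under pullback, surjectivity, and injectivity.
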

\begin{proof}
Let $\fv: E \to \PL(N, \Z)$ be a peicewise linear valuation. By Theorem \ref{th-plm-vs-preval-plf}(2) there exists a fan $\Sigma$ such that $\Phi = \Phi_\fv$ is piecewise linear with respect to $\Sigma$. Thus $\fv$ gives rise to a toric vector bundle $\E = \E_\Phi$ on the toric variety $X_\Sigma$. Now suppose $\Sigma'$ is another fan such that $\Phi$ is piecewise linear with respect to $\Sigma'$ as well. Take a common refinement $\Sigma''$ of the fans $\Sigma$ and $\Sigma'$. Clearly, $\Phi$ is piecewise linear with respect to $\Sigma''$ also. Let $\E'$, $\E''$ denote the corresponding toric vector bundle on $X_{\Sigma'}$, $X_{\Sigma''}$ respectively. We have birational $T$-equivariant morphisms $F: X_{\Sigma''} \to X_{\Sigma}$, $F': X_{\Sigma''} \to X_{\Sigma'}$. The equivalence of categories part of Klyachko's classification of toric vector bundles (Theorem \ref{th-Klyachko-plm}) implies that 
$\E'' \cong F^*(\E)$ and $\E'' \cong {F'}^*(\E')$. This shows that $(X_\Sigma, \E)$ and $(X_{\Sigma'}, \E')$ are equivalent as required.
\end{proof}

\begin{example}[Tangent bundle of $\mathbb{P}^2$]   \label{ex-TP^2}
In Example \ref{ex-tangent-bundle-Pn} we saw the Klyachko data and the piecewise linear map associated to the tangent bundle of projective space $\mathbb{P}^n$. 
Let us consider the projective plane $\mathbb{P}^2$ and determine the piecewise linear valuation $\fv = \fv_\E: E=\k^2 \to \PL(N, \Z)$ associated to its tangent bundle $\E = T\mathbb{P}^2$. We follow notation from Example \ref{ex-tangent-bundle-Pn} (see Figure \ref{fig-fan-P2}). By Proposition \ref{prop-arrangement-intersect-E-rho}, the arrangement $\A = \A_\fv$ associated to this piecewise linear valuation is the intersection of all the subspaces appearing in the Klyachko filtrations. The arrangement $\A$ consists of subspaces $\{0\}$, $\k^2$ and $\Span(\v_i)$, $i=1, 2, 3$. One computes that the values of $\fv$ at the vectors $\v_i$ are the following piecewise linear functions (below, $x = (x_1, x_2) \in N_\R = \R^2$):
$$\fv(\v_1) = \begin{cases} \min(x_2 - x_1, -x_1) & x \in \sigma_1 \\ x_1 - x_2 & x \in \sigma_2 \\ x_1 & x \in \sigma_3 \end{cases} \quad 
\fv(\v_2) = \begin{cases} x_2 - x_1 & x \in \sigma_1 \\ \min(x_1 - x_2, -x_2) & x \in \sigma_2 \\ x_2 & x \in \sigma_3 \end{cases}$$ 
$$\fv(\v_3) = \begin{cases} -x_1 & x \in \sigma_1 \\ -x_2 & x \in \sigma_2 \\ \min(x_1, x_2) & x \in \sigma_3. \end{cases}$$
One verifies that $\fv(\v_i)$, $i=1, 2, 3$, are indeed strictly concave functions. 
\begin{figure}[ht] 
\includegraphics[width=5cm]{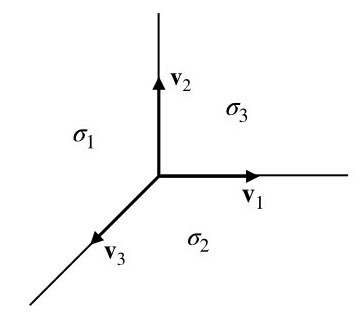} 
\caption{Fan of $\mathbb{P}^2$} \label{fig-fan-P2} 
\end{figure} 
\end{example}

\subsection{Parliament of polytopes of a piecewise linear valuation}  \label{subsec-parliament}
In \cite{DJS} the authors introduce the notion of a \emph{parliament of polytopes} associated to a toric vector bundle over a toric variety $X_\Sigma$. It is a generalization of the Newton polytope/moment polytope of a toric line bundle.
In Section \ref{subsec-preval-plf}, we associated a piecewise linear valuation $\fv: E \to \PL(N, \Z)$ to a toric vector bundle $\E$. In this section, we show how to recover the dimensions of weight spaces of the global sections from $\fv$. To this end, we introduce an extension of the notion of parliament of polytopes. 

Given a toric vector bundle $\E$, its parliament of polytopes $P(\E)$ (in the sense of \cite{DJS}) is a collection of convex polytopes $P_e \subset M_\R$ that are indexed by elements $e$ in the ground set of a matroid $M(\E)$. By abuse of notation, we use $M(\E)$ to denote the ground set of the matroid as well. The matroid $M(\E)$ is the matroid associated to the subspace arrangement in $E$ obtained by intersecting all the subspaces $E_i^\rho$, appearing in the Klyachko filtrations (see Theorem \ref{th-matroid-of-arrangement} and \cite[Paragraph after Proposition 3.1]{DJS}). The parliament is useful in counting the dimensions of the weight spaces of global sections of $\E$. More precisely, one has the following (it is implicit in the proof of \cite[Proposition 1.1]{DJS}):
\begin{proposition}  \label{prop-dim-H^0}
For every character $u \in M$, we have:
$$\dim H^0(X_\Sigma, \E)_u = \rank\{ e \in M(\E) \mid u \in P_e\},$$ where $\rank$ denotes the matroid rank.
\end{proposition}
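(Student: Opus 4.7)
The plan is to recast the weight space $H^0(X_\Sigma, \E)_u$ as a single element of the arrangement $\A_{\fv, S}$, and then apply a matroid-theoretic rank-equals-dimension fact.

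First, I would use the standard Klyachko description of weight-$u$ sections: over each affine chart $U_\sigma$ one has $E_u^\sigma = \bigcap_{\rho \in \sigma(1)} E^\rho_{\langle u, \v_\rho \rangle}$, and intersecting over all cones gives
$$H^0(X_\Sigma, \E)_u \;=\; \bigcap_{\rho \in \Sigma(1)} E^\rho_{\langle u, \v_\rho \rangle}.$$
Next, let $\phi_u \in M \subset S$ denote the linear function $x \mapsto \langle u, x \rangle$. Via the correspondence $\fv(e)(\v_\rho) = \Phi(\v_\rho)(e) = \max\{i \mid e \in E^\rho_i\}$, membership $e \in E^\rho_{\langle u, \v_\rho \rangle}$ is exactly the condition $\fv(e)(\v_\rho) \geq \phi_u(\v_\rho)$. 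Hence the above intersection is contained in $E_{\geq \phi_u}$. For the reverse containment, I would upgrade the ray-wise inequality to all of $N_\R$: on each maximal cone $\sigma$, $\fv(e)$ is a minimum of linear functions $\langle u_{\sigma, j}, \cdot \rangle$ over indices $j$ with $e_j \neq 0$ in the frame decomposition; the inequalities at the primitive ray generators of $\sigma$ force each such $u_{\sigma, j} - u$ to pair nonnegatively with every $\v_\rho \in \sigma(1)$, hence to lie in $\sigma^\vee$, so the inequality extends to every $x \in \sigma$. Therefore $H^0(X_\Sigma, \E)_u = E_{\geq \phi_u}$, and this subspace belongs to $\A_{\fv, S}$ because $\phi_u \in S$.

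Second, I would unwind the definition of $P_{\fv(e)}$: by construction $u \in P_{\fv(e)}$ says $\langle x, u \rangle \leq \fv(e)(x)$ for every $x \in N_\R$, which is the same as $\phi_u \leq \fv(e)$, which in turn is the same as $e \in E_{\geq \phi_u}$. Consequently
$$\{e \in M(\fv, S) \mid u \in P_{\fv(e)}\} \;=\; M(\fv, S) \cap E_{\geq \phi_u},$$
and the proposition reduces to the equality $\rank\bigl(M(\fv, S) \cap E_{\geq \phi_u}\bigr) = \dim E_{\geq \phi_u}$.

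Finally, I would invoke the matroid-theoretic property that for the representable matroid of a subspace arrangement closed under intersection, the rank of the trace of a generic spanning set on any subspace in the arrangement equals the dimension of that subspace. The arrangement $\A_{\fv, S}$ is closed under intersection by Lemma \ref{lem-semilattice-preval-arrangement}, and $E_{\geq \phi_u} \in \A_{\fv, S}$, so this applies and finishes the proof. This last step is where I expect the main obstacle: Theorem \ref{th-matroid-of-arrangement} as stated only asserts well-definedness of $M(\fv, S)$, not the rank-equals-dimension property, so one either cites the standard matroid-theoretic result or gives a short direct argument exploiting the genericity condition in the definition of the spanning set.
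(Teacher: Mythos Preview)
Your proposal is correct and matches the paper's approach essentially step for step. One clarification: the paper does not actually prove Proposition~\ref{prop-dim-H^0} itself (it cites \cite{DJS}); rather, it proves the generalization Theorem~\ref{th-parliament-global-sec}, and your argument is that proof almost verbatim---identify $H^0(X_\Sigma,\E)_u$ with $\bigcap_\rho E^\rho_{\langle u,\v_\rho\rangle}$ via Klyachko, upgrade the ray-wise inequality to all of $N_\R$ using that $\fv(e)$ restricted to each maximal cone is a minimum of linear functions, and then invoke the rank--dimension property $\dim L=\rank(L\cap M(\A))$ for $L\in\A$. The paper handles your anticipated ``main obstacle'' exactly as you suggest: it simply recalls this matroid fact as known.

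One small expository slip to fix: you write ``Hence the above intersection is contained in $E_{\geq\phi_u}$'' and then justify the \emph{reverse} containment by the upgrade argument. These labels are swapped. The inclusion $E_{\geq\phi_u}\subset\bigcap_\rho E^\rho_{\langle u,\v_\rho\rangle}$ is the trivial direction (an inequality valid on all of $N_\R$ is certainly valid at the ray generators); the upgrade from rays to all of $N_\R$ proves the opposite, non-trivial inclusion. The mathematics is right---only the directions are mislabeled.
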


For a piecewise linear function $\phi \in \PL(N_\R, \R)$ let us define a polytope $P_\phi$ by:
\begin{equation}  \label{equ-P-h}
P_\phi =  \{ y \in M_\R \mid \langle x, y \rangle \leq \phi(x),~\forall x \in N_\R \}.
\end{equation}
We point out that this is the reverse of the inequality \eqref{def-polytope-of-plf}, which was used to relate valuations with values in $\PL(N_\R, \R)$ to valuations with values in $\P(M_\R)$. 

Let $\fv: E \to \PL(N_\R, \R)$ be a finite piecewise linear valuation (Definition \ref{def-PL-val}). Let $S \subset \PL(N, \Z)$ be a subset that is closed under taking maximum. Let $\mathcal{A}_{\fv, S} = \{E_{\geq \phi} \mid \phi \in S \}$ denote the linear subspace arrangement associated to the valuation $\fv$. Also let $M(\fv, S)$ be the matroid associated to $\mathcal{A}_{\fv, S}$ (see Section \ref{subsec-semilattice-val}). By abuse of notation, we denote the ground set of this matroid also by $M(\fv, S)$.

\begin{definition}   \label{def-parliament-PL-val}
We define the \emph{parliament of polytopes} associated to $(\fv, S)$ to be the multiset:
$$P(\fv, S) = \{ P_{\fv(e)} \mid e \in M(\fv, S)\}.$$
\end{definition}

\begin{theorem}  \label{th-parliament-global-sec}
With notation as above, suppose $S$ contains the character lattice $M$. Then, for any $u \in M$, we have the following:
\begin{equation}  \label{equ-parliament}
\dim(H^0(X_\Sigma, \E)_u) = \rank\{e \in M(\fv, 
S) \mid u \in P_{\fv(e)} \}.
\end{equation}
In particular, the smallest choice for $S$ for which \eqref{equ-parliament} holds, is the set consisting of taking all possible maximums of elements in the character lattice $M$.  
\end{theorem}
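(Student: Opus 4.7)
The plan is to identify $\dim H^0(X_\Sigma, \E)_u$ with the dimension of the distinguished subspace $E_{\geq u} \in \mathcal{A}_{\fv, S}$ obtained by viewing $u \in M$ as a linear function on $N_\R$ (which lies in $\PL(N, \Z)$ and, by hypothesis, in $S$), and then to read off the rank formula from the standard matroid-of-a-subspace-arrangement construction in Theorem \ref{th-matroid-of-arrangement}.

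First I would translate the Klyachko data into the $\fv$ language. From the correspondence in Theorem \ref{th-plm-vs-preval-plf} one has $E_i^\rho = \{e \in E \mid \fv(e)(\v_\rho) \geq i\}$, and the standard description of $T$-invariant global sections via Klyachko filtrations gives
\[
H^0(X_\Sigma, \E)_u \;=\; \bigcap_{\rho \in \Sigma(1)} E_{\langle u, \v_\rho\rangle}^\rho.
\]
The crucial geometric input is then the identity $E_{\geq u} = H^0(X_\Sigma, \E)_u$. One inclusion is trivial by evaluating at the primitive vectors $\v_\rho$. For the reverse inclusion, fix a maximal cone $\sigma \in \Sigma$ and use Klyachko's compatibility condition \eqref{equ-Klyachko-comp-condition} to write $\fv(e)(x) = \min\{\langle x, u_{\sigma, j}\rangle : e_j \neq 0\}$ on $\sigma$, where $e = \sum_j e_j$ is the decomposition in the frame $L_\sigma$. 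As a minimum of linear functions this is concave and positively homogeneous of degree one on $\sigma$, so for $x = \sum_{\rho \in \sigma(1)} c_\rho \v_\rho$ with $c_\rho \geq 0$ one obtains $\fv(e)(x) \geq \sum_\rho c_\rho \fv(e)(\v_\rho) \geq \sum_\rho c_\rho \langle u, \v_\rho\rangle = \langle u, x\rangle$, hence $e \in E_{\geq u}$. This is the main technical step.

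Next I would unwind the parliament condition: $u \in P_{\fv(e)}$ means $\langle u, x\rangle \leq \fv(e)(x)$ for all $x \in N_\R$, which is exactly $e \in E_{\geq u}$. Therefore $\{e \in M(\fv, S) \mid u \in P_{\fv(e)}\} = M(\fv, S) \cap E_{\geq u}$. Since $S$ is closed under taking maximum, Lemma \ref{lem-semilattice-preval-arrangement}(2) makes $\mathcal{A}_{\fv, S}$ closed under intersection and the matroid $M(\fv, S)$ is well-defined via Theorem \ref{th-matroid-of-arrangement}; any generic spanning set representing it restricts to a spanning set of each subspace in the arrangement, so the rank of $M(\fv, S) \cap E_{\geq u}$ equals $\dim E_{\geq u} = \dim H^0(X_\Sigma, \E)_u$, giving the formula.

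For the minimality statement, observe that the formula for every $u \in M$ forces each $E_{\geq u}$ into the arrangement (hence $M \subset S$), while well-definedness of the matroid requires $S$ to be join-closed in $\PL(N, \Z)$. The smallest $S$ satisfying both conditions is the join-subsemilattice generated by $M$, i.e.\ the collection of all finite maxima of characters. The chief obstacle in the proof is the concavity-plus-homogeneity step inside a single maximal cone, which promotes the ray-wise inequalities $\fv(e)(\v_\rho) \geq \langle u, \v_\rho\rangle$ to the pointwise inequality $\fv(e)(x) \geq \langle u, x\rangle$ on all of $N_\R$; once this is in hand, everything else is a direct unwinding of definitions together with the generic-spanning-set property of the matroid $M(\fv, S)$.
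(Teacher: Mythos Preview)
Your proposal is correct and follows essentially the same route as the paper's own proof. Both arguments identify $H^0(X_\Sigma,\E)_u$ with $E_{\geq u}$ via Klyachko's description $\bigcap_\rho E^\rho_{\langle u,\v_\rho\rangle}$, isolate as the only nontrivial step the implication ``$\fv(e)(\v_\rho)\geq\langle u,\v_\rho\rangle$ for all rays $\Rightarrow$ $\fv(e)(x)\geq\langle u,x\rangle$ for all $x$'', and prove it by writing $x=\sum_\rho c_\rho\v_\rho$ inside a maximal cone and using that $\fv(e)$ restricted to $\sigma$ is a minimum of linear functions; your phrasing via concavity plus positive homogeneity is exactly the inequality the paper writes out line by line, and the final step invoking $\dim(L)=\rank(L\cap M(\mathcal{A}))$ for $L\in\mathcal{A}$ matches the paper's use of the generic-spanning-set property.
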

\begin{proof}
We recall that for a subspace arrangement $\mathcal{A}$ with matroid $M(\mathcal{A})$, we have $\dim(L) = \rank(L \cap M(\mathcal{A}))$, for any $L \in \mathcal{A}$. 
Applying this to the arrangement $\mathcal{A}_{\fv, S}$ and its matroid $M(\fv, S)$, for any $u \in M$ we have:
$$\dim(E_{\fv \geq u}) = \rank\{e \in M(\fv, S) \mid \fv(e) \geq u \} = \{e \in M(\fv, S) \mid u \in P_{\fv(e)}\}.$$
It remains to show that $\dim(H^0(X_\Sigma, \E)_u) = \dim(E_{\fv \geq u})$. 
We note that, by arguments in the Klyachko classification, we have: $$\dim(H^0(X_\Sigma, \E)_u) = \dim(\bigcap_{\rho \in \Sigma(1)} E^\rho_{\langle u, \v_\rho \rangle}) = \dim(\{ e \in E \mid \fv(e)(\v_\rho) \geq \langle u, \v_\rho \rangle, \forall \rho \in \Sigma(1)\}).$$ 
Thus, it is enough to show that for fixed $e \neq 0$, the condition that: $$\fv(e)(\v_\rho) \geq \langle u, \v_\rho \rangle, \quad \forall \rho \in \Sigma(1),$$ implies 
$$\fv(e)(x) \geq \langle u, x\rangle, \quad \forall x \in N_\R.$$ To show this, we note that, for any cone $\sigma \in \Sigma$, any $x \in \sigma$ can be written as $x = \sum_{\rho \in \sigma(1)} c_\rho \v_\rho$ with $c_\rho \geq 0$, and moreover, $\fv(e)(x)$ is given bey $\fv(e)(x) = \min\{ \langle u_{\sigma, i}, x \rangle \mid e_i \neq 0 \}$. Hence:
\begin{align*}
\fv(e)(x) - \langle u, x \rangle &= \min\{\langle u_{\sigma, i} - u, x \rangle \mid e_i \neq 0 \}, \\
&\geq \sum_{\rho \in \sigma(1)} c_\rho \min\{ \langle u_{\sigma, i} - u, \v_\rho \rangle \mid e_i \neq 0 \}, \\
&\geq 0.  
\end{align*}
The last inequality is because, by assumption, $\fv(e)(\v_\rho) = \min\{ \langle u_{\sigma, i}, \v_\rho \rangle \mid e_i \neq 0 \} \geq \langle u, \v_\rho \rangle$, for all $\rho \in \sigma(1)$.
\end{proof}

The above theorem, shows that while the matorid and parliament depend on the choice of subset $S$, if $S$ is sufficiently large, the counting function $\mu_{\fv, S}: M \to \Z_{\geq 0}$ defined by: $$\mu_{\fv, S}(u) = \rank\{e \in M(\fv, S) \mid u \in P_{\fv(e)} \}$$ does not depend on $S$ and only depends on $\fv$. This is reflected in the fact that under a toric pull-back, the weight spaces of global sections do not change.  


To recover the Di Rocco-Jabbusch-Smith parliament of polytopes using the above construction, it is convenient to enlarge the lattice $\PL(N, \Z)$. 
Namely, we consider the larger lattice of $\widehat{\PL}(N, \overline{\Z})$ consisting of the functions $\phi: N \to \overline{Z}$ that are homogeneous of degree $1$, i.e. $\phi(c x) = c \phi(x)$, for all $c \in \Z$ and $x \in N$. Clearly, $(\widehat{\PL}(N, \overline{\Z}), \geq, \min, \max)$ is a lattice conatining $\PL(N, \Z)$ as a sublattice.

For each ray $\rho \in \Sigma(1)$ and $i \in \Z$, let $\phi_{\rho, i} \in \widehat{\PL}(N, \overline{\Z})$ be defined by $\phi_{\rho, i}(\v_\rho) = i$ and $\phi_{\rho, i}(x) = \infty$ for all $x \notin \rho$. Let $S_\Sigma \subset \widehat{\PL}(N, \overline{\Z})$ be the subset obtained by taking maximums of any collection of the $\phi_{\rho, i}$, $\rho \in \Sigma(1)$, $i \in \Z$.
\begin{proposition} \label{prop-arrangement-intersect-E-rho}
Let $\fv$ be a finite piecewise linear valuation. Let $\E$ be a toric vector bundle on a toric variety $X_\Sigma$ representing the equivalence class of toric vector bundles corresponding to $\fv$. We have the following:
\begin{itemize}
\item[(a)] The subspace arrangement $\mathcal{A}_{\fv, S_\Sigma}$ coincides with the Klyachko arrangement obtained by taking intersections of all subspaces appearing in the Klyachko filtrations. 
\item[(b)] The parliament of polytopes $P(\E)$ (as defined in \cite{DJS}) coincides with the parliament of polytopes $P(\fv, S_\Sigma)$. 
\end{itemize}
\end{proposition}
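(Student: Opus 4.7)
The plan is to unpack the definitions and reduce both parts to elementary computations about piecewise linear inequalities. For part (a), I would first identify $E_{\geq \phi_{\rho,i}}$ with the Klyachko subspace $E^\rho_i$: by construction, the inequality $\fv(e) \geq \phi_{\rho,i}$ reduces to the single ray-wise condition $\fv(e)(\v_\rho) \geq i$, because $\phi_{\rho,i}$ is $\infty$ off $\rho$ and therefore imposes no further constraint (with the convention that this value is a placeholder for ``no constraint''). Next, exactly as in the proof of Lemma \ref{lem-semilattice-preval-arrangement}(2), the $\max$-operation in $\widehat{\PL}(N, \overline{\Z})$ passes to intersection of sublevel subspaces: $E_{\geq \max(\phi_1, \ldots, \phi_k)} = \bigcap_l E_{\geq \phi_l}$. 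Since $S_\Sigma$ is defined as the set of maxima of finite subcollections of the $\phi_{\rho,i}$, we conclude that $\mathcal{A}_{\fv, S_\Sigma}$ consists of all intersections of the Klyachko subspaces $E^{\rho}_i$, which is precisely the Klyachko arrangement.

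For part (b), combining (a) with Theorem \ref{th-matroid-of-arrangement} shows that the matroids $M(\fv, S_\Sigma)$ and $M(\E)$ coincide, since they come from the same subspace arrangement. It then remains to match the polytopes attached to each matroid element. The Di Rocco-Jabbusch-Smith polytope is $P_e = \{y \in M_\R \mid \langle y, \v_\rho \rangle \leq \fv(e)(\v_\rho)~\forall \rho \in \Sigma(1)\}$, whereas by Definition \ref{def-parliament-PL-val} ours is $P_{\fv(e)} = \{y \in M_\R \mid \langle x, y\rangle \leq \fv(e)(x)~\forall x \in N_\R\}$. The inclusion $P_{\fv(e)} \subseteq P_e$ is immediate, since the latter imposes fewer constraints. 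For the reverse inclusion, take $y$ satisfying the ray-wise constraints and $x \in N_\R$. Since $\Sigma$ is complete, $x$ lies in some maximal cone $\sigma$, and we may write $x = \sum_{\rho \in \sigma(1)} c_\rho \v_\rho$ with $c_\rho \geq 0$. Using $\fv(e)|_\sigma(x) = \min_{i:\, e_i \neq 0}\langle x, u_{\sigma,i}\rangle$ and the superadditivity of the minimum against nonnegative weighted sums (precisely the manipulation already carried out in the proof of Theorem \ref{th-globally-generated-convex}), we obtain $\fv(e)(x) \geq \sum_\rho c_\rho \fv(e)(\v_\rho) \geq \sum_\rho c_\rho \langle y, \v_\rho\rangle = \langle y, x\rangle$, so $y \in P_{\fv(e)}$, as required.

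The main obstacle is a bookkeeping one: fixing the correct interpretation of the functions $\phi_{\rho,i}$ inside the enlarged semilattice $\widehat{\PL}(N, \overline{\Z})$, so that they each encode exactly one ray-wise inequality, and verifying that their maxima in $S_\Sigma$ reproduce exactly the intersections of Klyachko subspaces (nothing less, nothing more). Once this is in place, both (a) and (b) fall out of the piecewise linear inequality already established in the proof of Theorem \ref{th-globally-generated-convex}.
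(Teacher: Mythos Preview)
Your proposal is correct and takes the same approach as the paper, which simply records that the result ``is a straightforward consequence of definitions and constructions'' and gives no further argument; you have supplied precisely the unpacking of definitions that the paper leaves implicit. One minor remark: the superadditivity computation you invoke for $P_e = P_{\fv(e)}$ is spelled out verbatim in the proof of Theorem~\ref{th-parliament-global-sec} rather than Theorem~\ref{th-globally-generated-convex}, and your caveat about the $\infty$-convention for $\phi_{\rho,i}$ is well placed---the paper's definition only makes sense if the off-ray value is read as imposing no constraint.
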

\begin{proof}
It is a straightforward consequence of definitions and constructions. 
\end{proof}

\section{Toric vector bundles as tropical points} \label{sec-tvbs-trop-points}
In this section we show that a toric vector bundle over a toric variety $X_\Sigma$ can be defined by the data of a \emph{tropical point of a linear ideal over the piecewise linear semifield}.  To make this precise, we require the notion of tropicalization over an \emph{idempotent semifield}. 

\begin{definition}[Idempotent semifield]  \label{def-idempotent-semifield}
An \emph{idempotent semifield} $\O$ is a set equipped with commutative and associative operations $\oplus$ and $\otimes$ such that:
\begin{itemize}
\item[(i)] $\otimes$ distributes over $\oplus$,
\item[(ii)] there is a neutral element $\infty \in \O$ with respect to $\oplus$,
\item[(iii)] there is a neutral element $0 \in \O$ with respect to $\otimes$,
\item[(iv)] any element not equal to $\infty$ has an inverse with respect to $\otimes$,
\item[(v)] for any element $a$ we have $a \oplus a = a$. 
\end{itemize}
\end{definition}

The condition (v) above ensures that we can define a partial order on any idempotent semifield, in particular we say that $a \preceq b$ if $a \oplus b = a$. An idempotent semifield $\O$ possesses enough structure to define ``tropical geometry with coefficients in $\O$".

\begin{definition}[Tropical variety over a semifield $\O$] \label{def-tropicalO}
Let $f = \sum_{\alpha \in S} C_\alpha x^\alpha \in \k[x_1, \ldots, x_n]$ be a polynomial, where $S$ is the set of exponents $\alpha$ with $C_\alpha \neq 0$. The \emph{tropicalization} of $f$ over $\O$ is the function $\trop_\O(f): \O^n \to \O$ computed as follows:

\[\trop(f)_\O(a_1, \ldots, a_n) = \bigoplus_{\alpha \in S} a_1^{\otimes \alpha_1}\otimes \cdots \otimes a_n^{\otimes \alpha_n}.\]
The \emph{tropical hypersurface} $\Trop_\O(f)$ is then defined to be the set of $(a_1, \ldots, a_n)$ such that for all $\beta \in S$ we have:

\[\trop_\O(f)(a_1, \ldots, a_n) = \bigoplus_{\alpha \in S \setminus \{\beta\}} a_1^{\otimes \alpha_1}\otimes \cdots \otimes a_n^{\otimes \alpha_n}.\]
In particular, any monomial term of $\trop_\O(f)$ can be dropped without changing the value on a point $(a_1, \ldots, a_n) \in \Trop_\O(f)$. 
(Note that $\trop_\O(f)$ is a function from $\O^n$ to $\O$, while $\Trop_\O(f)$ is a subset of $\O^n$.)
Finally, let $I \subset \k[x_1, \ldots, x_n]$ be a polynomial ideal, the \emph{tropical variety} of $I$ over $\O$ is defined to be $\Trop_\O(
I) = \bigcap_{f \in I} \Trop_\O(f) \subset \O^n$. 

When the semifield is $(\overline{\R}, \min, +)$, we denote the tropical variety of an ideal $I$ simply by $\Trop(I)$.
\end{definition}

Idempotent semifields also have enough structure to allow us to define a notion of a valuation (cf. Definition \ref{def-semilattice-val}). 
\begin{definition}[Semifield valuation] \label{def-valuationO}
Let $A$ be a commutative $\k$-algebra, then a \emph{valuation over $\k$ with values in} $\O$ is defined to be a function $\fv: A \to \O$ satisfying the following for all $f, g \in A$:

\begin{enumerate}
\item $\fv(fg) = \fv(f) + \fv(g)$,
\item $\fv(f + g) \geq \fv(f) \oplus \fv(g)$,
\item $\fv(Cf) = \fv(f)$ for any $0 \neq C \in \k$,
\item $\fv(f) = \infty$ if and only if $f=0$.
\end{enumerate}

\end{definition}

The next well-known proposition links Definitions \ref{def-tropicalO} and \ref{def-valuationO} (this observation is sometimes known as Payne's theorem, see \cite{Payne-analytification, Giansiracusa}). 

\begin{proposition} \label{thm-payne}
Let $F: \k[x_1, \ldots, x_n] \to A$ be a presentation of a $\k$-algebra $A$ with $I = \ker(F)$, and let $f_i = F(x_i)$ for $1 \leq i \leq n$. Then we have $(\fv(f_1), \ldots, \fv(f_n)) \in \Trop_\O(I)$. 
\end{proposition}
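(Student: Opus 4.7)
My plan is to prove the containment pointwise: I will fix an arbitrary $f \in I$ and show that the point $\fv(F)_\bullet := (\fv(f_1), \ldots, \fv(f_n))$ lies in $\Trop_\O(f)$, which by definition of $\Trop_\O(I) = \bigcap_{f \in I} \Trop_\O(f)$ suffices. So write $f = \sum_{\alpha \in S} C_\alpha x^\alpha$ with $C_\alpha \neq 0$ and, for each $\alpha \in S$, put $b_\alpha := \fv(f_1)^{\otimes \alpha_1} \otimes \cdots \otimes \fv(f_n)^{\otimes \alpha_n}$, so that $\trop_\O(f)(\fv(F)_\bullet) = \bigoplus_{\alpha \in S} b_\alpha$. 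The claim $\fv(F)_\bullet \in \Trop_\O(f)$ amounts to showing that for each $\beta \in S$ one may drop the $\beta$-term from this tropical sum, i.e.\ $\bigoplus_{\alpha \in S} b_\alpha = \bigoplus_{\alpha \in S \setminus \{\beta\}} b_\alpha$.

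The key input is that $f$ lies in the kernel of $F$: we have $\sum_{\alpha \in S} C_\alpha f_1^{\alpha_1} \cdots f_n^{\alpha_n} = 0$ in $A$. Fixing any $\beta \in S$ and isolating that monomial, I rewrite this as
\[
C_\beta f_1^{\beta_1} \cdots f_n^{\beta_n} \;=\; -\sum_{\alpha \in S \setminus \{\beta\}} C_\alpha f_1^{\alpha_1} \cdots f_n^{\alpha_n}.
\]
Now I apply $\fv$ to both sides and use the valuation axioms (Definition \ref{def-valuationO}): the multiplicativity axiom (1) combined with invariance under nonzero scalars (axiom (3)) gives $\fv(C_\alpha f_1^{\alpha_1} \cdots f_n^{\alpha_n}) = b_\alpha$ for every $\alpha$, and similarly for the $\beta$-term. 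The non-Archimedean axiom (2) applied to the right-hand side (which is a sum of $\#S - 1$ terms) yields
\[
b_\beta \;=\; \fv\!\Bigl(-\!\!\sum_{\alpha \neq \beta} C_\alpha f^\alpha\Bigr) \;\succeq\; \bigoplus_{\alpha \in S \setminus \{\beta\}} b_\alpha.
\]

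To conclude, I translate $\succeq$ back into an identity in $\O$. By the definition of the partial order, $a \succeq c$ means $a \oplus c = c$; hence the displayed inequality gives
\[
b_\beta \oplus \bigoplus_{\alpha \in S \setminus \{\beta\}} b_\alpha \;=\; \bigoplus_{\alpha \in S \setminus \{\beta\}} b_\alpha,
\]
which is precisely the statement that dropping the $\beta$-term does not change the tropical sum. Since $\beta \in S$ was arbitrary, $\fv(F)_\bullet \in \Trop_\O(f)$, and since $f \in I$ was arbitrary, $\fv(F)_\bullet \in \Trop_\O(I)$.

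The only real subtlety I anticipate is bookkeeping around the partial order: making sure the direction of $\succeq$ coming from the non-Archimedean axiom (2) matches exactly the absorption identity $a \oplus c = c$ needed to drop a term. The sign in $-\sum_{\alpha \neq \beta} C_\alpha f^\alpha$ is harmless because axiom (3) neutralizes the scalar $-1$, and iterating axiom (1) produces the powers $\fv(f_i)^{\otimes \alpha_i}$; neither of these introduces a genuine obstacle.
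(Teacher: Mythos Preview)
Your argument is correct and is exactly the standard proof of this fact. Note, however, that the paper does not supply a proof of this proposition at all: it is stated as ``well-known'' with a citation to \cite{Payne-analytification, Giansiracusa}, so there is nothing to compare against beyond observing that your write-up matches the expected argument.
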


The application of these ideas to toric vector bundles starts with the observation that $(\PL(N, \Z), \min, +)$ is a semifield (see Remark \ref{rem-PL-semialgebra}). 
Let $\fv: E \to \PL(N, \Z)$ be a piecewise linear valuation as in Section \ref{sec-tvbs-PL-val}. It is straightforward to see that $\fv$ extends to an algebra valuation (which we denote by the same letter) $\fv: \Sym(E) \to \PL(N, \Z)$, as follows: for $e_1 \cdots e_k \in \Sym^k E$, where $e_1, \ldots, e_k \in E$, define $\fv(e_1 \cdots e_k) = \fv(e_1) + \cdots + \fv(e_k)$. One verifies that this gives a well-defined valuation. 

Let $\mathcal{B} = \{b_1, \ldots, b_s\} \subset E$ be a spanning set. Let $L \subset \k[x_1, \ldots, x_s]$ be the linear ideal generated by the linear relations among the $b_i$.

\begin{theorem}  \label{th-val-vs-trop-pt}
Let $(\phi_1, \ldots, \phi_s) \in \PL(N, \Z)^s$.
Then there exists a piecewise linear valuation $\fv: E \to \PL(N, \Z)$ with $\fv(b_i) = \phi_i$, for all $i$, if and only if $(\phi_1, \ldots, \phi_s) \in \Trop_{\PL(N, \Z)}(L)$. Moreover, $\fv$ is unique, whenever it exists. 
\end{theorem}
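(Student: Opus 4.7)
The plan is to prove each direction separately, treating uniqueness at the end.

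For \textbf{necessity}, suppose a piecewise linear valuation $\fv\colon E \to \PL(N,\Z)$ with $\fv(b_i) = \phi_i$ exists. By the remark on piecewise linear valuations on algebras, $\fv$ extends uniquely to a homogeneous algebra valuation $\tilde{\fv}\colon \Sym(E) \to \PL(N,\Z)$. Since $\mathcal{B}$ spans $E$, we have the standard presentation $\Sym(E) \cong \k[x_1,\ldots,x_s]/L$ via $x_i \mapsto b_i$. Applying Proposition \ref{thm-payne} to this presentation immediately yields $(\phi_1,\ldots,\phi_s) = (\tilde{\fv}(b_1),\ldots,\tilde{\fv}(b_s)) \in \Trop_{\PL(N,\Z)}(L)$.

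For \textbf{sufficiency}, given $(\phi_1,\ldots,\phi_s) \in \Trop_{\PL(N,\Z)}(L)$, I would define $\fv$ by the explicit max--min formula
\[
\fv(e)(x) \;=\; \max\bigl\{\, \min_{i \in S} \phi_i(x) \;:\; S \subseteq \{1,\ldots,s\},\ e \in \Span(b_i : i \in S)\,\bigr\}
\]
for $0 \neq e \in E$ and $x \in N_\R$, with $\fv(0) = \infty$ (this arises naturally from the convention $\min_{\emptyset} = \infty$). Only finitely many subsets $S$ contribute, so $\fv(e)$ is a finite max of mins of integral piecewise linear functions, and thus itself lies in $\PL(N,\Z)$. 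Scalar invariance (a) and non-vanishing (c) are immediate. The non-Archimedean axiom (b) is verified pointwise: at each $x$, optimal supports $S_1$ for $e_1$ and $S_2$ for $e_2$ paste to a support $S_1 \cup S_2$ for $e_1+e_2$, yielding $\fv(e_1+e_2)(x) \geq \min(\fv(e_1)(x), \fv(e_2)(x))$.

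The \textbf{main obstacle} is verifying that $\fv(b_i) = \phi_i$. The trivial representation $b_i = b_i$ gives $\fv(b_i) \geq \phi_i$ for free. For the reverse inequality, any alternative representation $b_i = \sum_{j \in T} c_j b_j$ produces a linear form $\ell = x_i - \sum_{j \in T} c_j x_j \in L$, and the defining tropical hypersurface condition at $(\phi_1,\ldots,\phi_s)$ applied to $\ell$ asserts the $\PL$-functional identity $\min_{j \in \{i\} \cup T} \phi_j = \min_{j \in T} \phi_j$, from which $\phi_i \geq \min_{j \in T} \phi_j$ follows pointwise. Combining this with representations that include $b_i$ with nontrivial coefficient (whose contributions are bounded above by $\phi_i$) then yields $\fv(b_i) \leq \phi_i$. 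The delicate point is to interpret the tropical hypersurface condition over $\PL(N,\Z)$ as a genuine family of equalities of piecewise linear functions (rather than of scalars) and to carefully account for every possible support pattern of representations of $b_i$.

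For \textbf{uniqueness}, any valuation $\fv'$ satisfying the hypotheses automatically obeys $\fv'(\sum c_i b_i) \geq \min_{c_i \neq 0} \phi_i$ by the non-Archimedean axiom, so $\fv' \geq \fv$ pointwise. The reverse inequality, forcing $\fv' = \fv$, is extracted from the tropical condition via the max--min formula; via Theorem \ref{th-plm-vs-preval-plf} this reduces to the pointwise real-valued statement that a valuation on $E$ corresponding to a fixed tuple in $\Trop(L) \subset \overline{\R}^s$ is determined by the formula, which is consistent with the bijection summarized in Proposition \ref{prop-tropdata} between tropical points and toric vector bundles up to the equivalence of toric blow-ups.
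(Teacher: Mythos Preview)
Your treatment of the two directions of the equivalence is sound, and for sufficiency you take a genuinely different route from the paper. You build $\fv$ directly by the max--min formula over spanning subsets of $\mathcal{B}$ and verify the valuation axioms and the identity $\fv(b_i)=\phi_i$ by hand, using only the tropical hypersurface condition for the linear forms of $L$. The paper instead works pointwise: evaluating $(\phi_1,\dots,\phi_s)$ at each $x\in N_\R$ yields a point of the ordinary tropical variety $\Trop(L)\subset\R^s$; composing with the embedding $j:\Trop(L)\hookrightarrow\tilde{\B}(E)$ gives a map $\Phi:N_\R\to\tilde{\B}(E)$; and the paper then shows $\Phi$ is piecewise linear by refining the common linearity fan of the $\phi_i$ along preimages of Gr\"obner cones of $L$, so that Theorem~\ref{th-plm-vs-preval-plf} yields $\fv$. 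Your argument is more elementary and self-contained (no Gr\"obner fan, no analytification language, no appeal to Theorem~\ref{th-plm-vs-preval-plf}); the paper's argument, in exchange, makes the link to the classical tropical linear space and the building explicit and directly produces a fan $\Sigma$ over which the associated toric vector bundle lives. Your max--min formula is in fact the explicit description of $j(\psi(x))$, so the two constructions agree.

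Your uniqueness argument, however, has a real gap. You correctly deduce $\fv'\ge\fv$ for any competitor $\fv'$, but the reverse inequality does not follow: the ``pointwise real-valued statement'' to which you reduce---that a valuation $v:E\to\overline{\R}$ is determined by its values $v(b_i)=a_i$ on a spanning set $\mathcal{B}$---is false. For instance, if $\mathcal{B}=\{b_1,b_2\}$ is a basis of $E$ (so $L=0$ and every tuple is tropical), take $N=\Z$ and $\phi_1=\phi_2=0$: the trivial valuation $\fv\equiv 0$ and the valuation $\fv'$ adapted to the frame $\{b_1+b_2,\,b_1\}$ with $\fv'(b_1+b_2)(x)=|x|$ and $\fv'(b_1)=0$ both satisfy $\fv'(b_1)=\fv'(b_2)=0$, yet $\fv'\neq\fv$. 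The same phenomenon persists when $L\neq 0$ by choosing a frame outside $\mathcal{B}$. (The paper's own proof does not address the uniqueness clause either.) What your formula does pin down is the \emph{minimal} piecewise linear valuation with the prescribed values on $\mathcal{B}$; a genuine uniqueness statement needs an additional hypothesis on $\mathcal{B}$, such as the adaptedness condition in Proposition~\ref{prop-tropdata}.
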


In light of Theorem \ref{th-semilattice-preval-toric-vb}, the above shows that tropical points correspond to (equivalence classes of) toric vector bundles (up to pull-back by toric blowups).


Before giving the proof of Theorem \ref{th-val-vs-trop-pt}, we need to recall some basic facts about Gr\"obner fans and tropical varieties. Let $I \subset \k[x_1, \ldots, x_s]$ be an ideal with affine variety $V = V(I)$. The \emph{Berkovich analytification} of $V$, denoted by $V^\text{an}$ is the space of all $\R$-valued valuations on the coordinate ring $\k[V] = \k[x_1, \ldots, x_s] / I$. There is a natural embedding $j: \Trop(I) \hookrightarrow V^\text{an}$ as follows (see \cite{Payne-analytification}): let $a=(a_1, \ldots, a_s) \in \Z^s$ be an integer point in the tropical variety $\Trop(I)$. By the fundamental theorem of tropical geometry, there is a formal curve $\gamma = (\gamma_1, \ldots, \gamma_s)$, where each $\gamma_i \in \k((t))$ is a formal Laurent series in an indeterminate $t$, such that $\gamma$ is a point of $V$ over the field $\k((t))$ and moreover, for every $i$ we have $a_i = \val_t(\gamma_i)$. Here $\val_t: \k((t)) \to \overline{\Z}$ denotes the $t$-adic valuation (or order of $t$ valuation) on the field of Laurent series $\k((t))$. Now, the valuation $j(a): \k[V] \to \Z$ is defined by: 
$$j(a)(f) = \val_t(f(\gamma)), \quad \forall f \in \k[V].$$
The definition of $j$ easily extends to the rational points in $\Trop(L)$ by replacing the field $\k((t))$ with the field of Puiseux series $\k\{\{t\}\}$. One then defines $j$ on the real points by continuity. 

With notation as before, in the case of a linear ideal $L$, the variety $V$ is a linear subspace of $\k^s$ and its coordinate ring can be identified with the symmetric algebra $\Sym(E)$. In this case, the valuation $j(a)$ can be regarded as a vector space valuation on $E$, and hence we have an embedding $j: \Trop(L) \hookrightarrow \tilde{\B}(E)$. We need the following two well-known facts:
\begin{itemize}
\item[(i)] The maximal cones $\tau$ in the Gr\"obner fan of $L$ are in one-to-one correspondence with the subsets $B_\tau$ of $\mathcal{B}$ that are vector space bases for $E$.
\item[(ii)] For every maximal cone $\tau$ in the Gr\"obner fan of $L$ and any point $a \in \Trop(L) \cap \tau$, the valuation $j(a): E \to \overline{\R}$ is adapted to the basis $B_\tau$.
\end{itemize}

\begin{proof}[Proof of Theorem \ref{th-val-vs-trop-pt}]
It follows from Proposition \ref{thm-payne} that if $\fv: E \to \PL(N, \Z)$ is a piecewise linear valuation then $(\fv(b_1), \ldots, \fv(b_s)) \in \Trop_{\PL(N, \Z)}(L)$. So we need to prove the other direction. Suppose $\phi=(\phi_1, \ldots, \phi_s) \in \Trop_{\PL(N, \Z)}(L)$. By evaluating $\phi$ at the points of $N_\R$ we get a map $\psi: N_\R \to \R^s$. Since $\phi$ lies in $\Trop_{\PL(N, \Z)}(L)$, the image of $\psi$ lands in $\Trop(L)$. Now, composing $\psi$ with the embedding $j: \Trop(L) \hookrightarrow \tilde{\B}(E)$ we obtain a map $\Phi: N_\R \to \tilde{\B}(E)$. By Theorem \ref{th-plm-vs-preval-plf}, it suffices to show that there is a complete fan $\Sigma$ such that $\Phi$ is a piecewise linear map with respect to $\Sigma$.
Consider first the fan $\Sigma'$ that is the common refinement of the domains of linearity of the $\phi_i$. Now for each $\sigma' \in \Sigma'$ we further subdivide it by intersecting with the preimages of the faces of the Gr\"obner fan of $L$. This produces a complete fan $\Sigma \subset N_\R$ with the property that each face $\sigma \in \Sigma$ is mapped linearly into an apartment of $\tilde{\B}(E)$.
\end{proof}


Let $\E$ be a toric vector bundle on a complete toric variety $X_\Sigma$ with associated piecewise linear map $\Phi: |\Sigma| \to \tilde{\B}(E)$ and piecewise linear valuation $\fv = \fv_\Phi: E \to \PL(N, \Z)$. Given a spanning set $\mathcal{B} = \{b_1, \ldots, b_s\}$ for $E$, one can ask: when does the values of $\fv$ on $\mathcal{B}$ determine the toric vector bundle $\E$? The following proposition answers this question.

\begin{proposition}\label{prop-tropdata}
Let $\E$ be a toric vector bundle over $X_\Sigma$ with corresponding piecewise linear map $\Phi: |\Sigma| \to \tilde{\B}(E)$. Let $\mathcal{B}=\{b_1, \ldots, b_s\} \subset E$ be a spanning set such that for each face $\sigma \in \Sigma$ there is an apartment of $\tilde{\B}(E)$ containing $\Phi(\sigma)$ with basis $B_\sigma$ a subset of $\mathcal{B}$.  Then the tuple $(\fv(b_1), \ldots, \fv(b_s)) \in \Trop_{\PL(N, \Z)}(L)$ as above determines $\E$. 
\end{proposition}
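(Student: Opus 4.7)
The plan is to reconstruct the piecewise linear map $\Phi: |\Sigma| \to \tilde{\B}(E)$ directly from the tuple $(\fv(b_1), \ldots, \fv(b_s))$, cone by cone, using the hypothesis on $\mathcal{B}$ to cut out an adapted basis on each cone, and then to invoke the classification (Theorem \ref{th-Klyachko-plm}, equivalently Theorem \ref{th-semilattice-preval-toric-vb}) to recover $\E$ up to equivalence.

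Fix a maximal cone $\sigma \in \Sigma$. By hypothesis there is a subset $B_\sigma \subseteq \mathcal{B}$ which is a vector space basis of $E$ and such that $\Phi(\sigma) \subseteq \tilde{A}(L_\sigma)$, where $L_\sigma = \{\Span(b) \mid b \in B_\sigma\}$ is the induced frame. Any valuation in $\tilde{A}(L_\sigma)$ is adapted to $L_\sigma$, so by the formula \eqref{equ-val-min}, for each $x \in \sigma$ and each nonzero $e \in E$ with unique expansion $e = \sum_{b \in B_\sigma} c_b(e)\,b$ in the basis $B_\sigma$, we have
$$
\Phi(x)(e) \;=\; \min\{\, \fv(b)(x) \mid c_b(e) \neq 0 \,\}.
$$
Because the expansion coefficients $c_b(e)$ depend only on $e$ and $B_\sigma \subset \mathcal{B}$, and because each $\fv(b)$ with $b \in B_\sigma$ appears as an entry of the given tuple, this formula reconstructs $\Phi|_\sigma$ explicitly from the tuple. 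Running over all maximal cones of $\Sigma$ recovers $\Phi$ on all of $|\Sigma|$; then $\fv = \fv_\Phi$ by Theorem \ref{th-plm-vs-preval-plf}, and Theorem \ref{th-semilattice-preval-toric-vb} identifies the equivalence class of $\E$ from $\fv$.

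The one thing that needs verification is consistency of this cone-wise reconstruction on overlaps: when $\tau = \sigma \cap \sigma'$ is a common face, the bases $B_\sigma$ and $B_{\sigma'}$ may well differ, yet the two formulas produced by the rule above must agree on $\tau$. This is exactly the content of Lemma \ref{lem-B3-val}: there is a permutation identifying $L_\sigma$ with $L_{\sigma'}$ which fixes every valuation adapted to both frames, and since the values of $\Phi$ on $\tau$ are valuations adapted to both $L_\sigma$ and $L_{\sigma'}$, the two local descriptions necessarily coincide. (Equivalently, the consistency is automatic because the global map $\Phi$ exists in the first place; we are merely checking that our reconstruction rule recovers it.) This compatibility check is the only mild subtlety; everything else is a formal unwinding of the definitions of adapted valuations and piecewise linear maps together with the already established classifications.
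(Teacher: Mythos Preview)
Your proof is correct and follows essentially the same approach as the paper's: both arguments reconstruct the bundle from the tuple by using the adapted bases $B_\sigma \subset \mathcal{B}$ guaranteed by the hypothesis, with the paper phrasing this as recovering the Klyachko data (filtrations from evaluation at ray generators, frames and characters from the $B_\sigma$) while you phrase it as recovering $\Phi|_\sigma$ cone by cone via the adapted-valuation formula \eqref{equ-val-min}. One small point: at the end you should invoke Theorem~\ref{th-Klyachko-plm} directly rather than passing through $\fv$ and Theorem~\ref{th-semilattice-preval-toric-vb}, since you have already recovered $\Phi$ on the fixed fan $\Sigma$ and the proposition asks for $\E$ itself, not merely its equivalence class.
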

\begin{proof}
The Klyachko data of $\E$ is recovered from $(\fv(b_1), \ldots, \fv(b_s))$ as follows.  For a facet $\sigma \in \Sigma$, the restriction of each $\fv(b_i)$ for $b_i \in B_\sigma$ is the integral linear function corresponding to a torus character $m_i \in M$; this is the data of a framing at each maximal cone in $\Sigma$. Now fix a ray $\rho \in \Sigma(1)$ and evaluate each entry of $(\fv(b_1), \ldots, \fv(b_s))$ at the integral generator of $\rho$. By definition we obtain a point in $\Trop(L)$, which gives a decreasing $\Z$-filtration of $E$. This combination of a filtration for each ray and a frame for each facet gives the Klyachko data of $\E$.
\end{proof}

Let $\phi = (\phi_1, \ldots, \phi_s)$ be a point in $\Trop_{\PL(N, \Z)}(L)$ and suppose the conditions in Proposition \ref{prop-tropdata} are satisfied. Then each piecewise linear function $\phi_i$ is determined by its values on the rays of $\Sigma$. Let $D$ be the integer $m \times s$ matrix, where $m = |\Sigma(1)|$ is the number of rays, whose $i$-th column is the values of $\phi$ on the primitive vectors on the rays in $\Sigma$. By Proposition \ref{prop-tropdata}, the data of the linear ideal $L$ and the matrix $D$ determines the toric vector bundle $\E$. We call $D$ the \emph{diagram} of $\E$  (with respect to the choice of the spanning set $\mathcal{B}$), see \cite[Section 4]{KM-PL}.

\begin{example}[Tangent bundle of $\mathbb{P}^2$]
We consider the tangent bundle $T\mathbb{P}^2$. In this case the linear ideal is $L = \langle x + y + z \rangle \subset \k[x, y, z]$, and the diagram is the $3 \times 3$ identity matrix.  More generally, the ideal $L = \langle x_0 + \cdots + x_n\rangle$ and a diagonal matrix with all non-negative entries defines an irreducible bundle over $\mathbb{P}^n$.  These bundles were first studied by Kaneyama \cite{Kaneyama}, where he shows that up to tensoring with line bundles, any irreducible bundle of rank $n$ on $\mathbb{P}^n$ is either of this type, or the dual of this type. The Cox rings of the projectivizations of these bundles are studied in \cite{George-Manon}. 
\end{example}

\end{document}